 \documentclass[preprint,12pt]{elsarticle}
\usepackage{amssymb,amsmath,amsthm,bm,stmaryrd,graphicx,subfigure,float,graphicx,nameref}
\usepackage[colorlinks,pdfborder=001,linkcolor=blue,anchorcolor=blue,citecolor=blue]{hyperref}
\usepackage{epstopdf}
\usepackage{tikz}
\usepackage[all]{xy}
\usepackage{amsthm}
\usepackage{graphicx}
\usepackage{lineno}
\usepackage{CJK}

\usetikzlibrary{positioning} 
\usepackage{graphicx}
\usepackage{color}
\newtheorem{thm}{Theorem}[section]
\newtheorem{lem}[thm]{Lemma}
\newtheorem{ex}[thm]{Example}
\newtheorem{cor}[thm]{Corollary}
\newtheorem{main-thm}{Main Theorem}

\newtheorem{prob}[thm]{Problem}

\theoremstyle{definition}
\newtheorem{definition}{Definition}

\theoremstyle{remark}
\newtheorem{rem}{Remark}

\newcommand{\R}{{\mathbb R}}                    
\newcommand{\N}{{\mathbb N}}                    
\newcommand{\II}{{\mathbb I}}                    
\newcommand{\sphere}{{\mathbf S}}               

\newcommand{\Q}{{\mathbb Q}}
\newcommand{\PP}{{\mathbb P}}
\newcommand{\0}{{\emptyset}}

\newcommand{\cl}{\operatorname{cl}}             
\newcommand{\cf}{\operatorname{cf}}             

\newcommand{\Cld}{\operatorname{Cld}}           
\newcommand{\sm}{\setminus}

\newcommand{\Clopen}{\operatorname{\Clopen}}           

\begin{document}
\begin{frontmatter}

\title{ Axioms of Continuous Separation \tnoteref{t1}}

\tnotetext[t1]{The work was supported by National Natural Science Foundation of China (No. 12571077).}
\author[Z. Yang]{Zhongqiang Yang}
 \address[Z. Yang]{School of Mathematical Sciences, Center for Applied Mathematics of Guangxi, Guangxi Minzu
University, Nanning 530006, China}
\ead{zqyang@stu.edu.cn}

\author[N. Wu]{Nada Wu*}

\cortext[mycorrespondingauthor]{Corresponding author.}

\address[N. Wu]{School of Mathematics and Statistics,
	Hanshan Normal University,
	Chaozhou, Guangdong 521041, China}

\ead{ndwu@hstc.edu.cn}

\begin{abstract}
\par For a $T_1$-space $X$, let $\Cld(X)$ be the class of all non-empty  closed sets in $X$ and $\mathcal{T}_4(X)=\{(F_1,F_2)\in \Cld(X)^2\mid F_1\cap F_2=\emptyset\}$. It is well known that $X$ is  $T_4$ if and only if for each $(F_1,F_2)\in \mathcal{T}_4(X)$, there exists a pair of closed sets $\phi_1(F_1,F_2)$ and $\phi_2(F_1,F_2)$ such that $\phi_1(F_1,F_2)\cup \phi_2(F_1,F_2)=X$ and $F_j\cap \phi_j(F_1,F_2)=\emptyset$ for $j=1,2$.
Motivated by this characterization, for a topology $\tau$ on $\Cld(X)$, we call $X$ $CT_4$ with respect to $\tau$ if such maps $\phi_j : \mathcal{T}_4(X) \to \Cld(X)$ can be chosen to be continuous. Analogously, we define $CT_i$-spaces for $i = 1,2,3$. In this paper, $\Cld(X)$ is always endowed with the Vietoris topology. We prove that every $CT_4$-space is countably compact, every  $CT_3$-space is a  Fr\'{e}chet-Urysohn space, and every separable subspace of a $CT_3$-space is metrizable. Many examples are provided. All finite-dimensional cubes, the infinite-dimensional cube, all finite-dimensional spheres, and every zero-dimensional compact metrizable space are $CT_4$. All infinite discrete spaces, all finite-dimensional Euclidean spaces, and all countable limit ordinal spaces are $CT_3$ but not $CT_4$. In addition, every metrizable space with a unique non-isolated point is $CT_3$, and it is $CT_4$ if and only if it is compact. Moreover, the topological sum of infinitely many $CT_3$-spaces is $CT_3$ but not $CT_4$. Every countable space with a unique non-isolated point is $CT_2$, and such a space is $CT_3$ precisely when it is metrizable. Every  proper subfield of the real numbers field $\mathbb{R}$ and its complement in $\mathbb{R}$, equipped with the usual Euclidean topology, are $CT_2$ but not $CT_4$; whether these spaces are $CT_3$ remains open. All uncountable ordinal spaces and the one-point compactification of any uncountable discrete space are non-$CT_2$.  Finally, $CT_1$ and $T_1$ are equivalent.  We propose several open problems. In particular, does there exist a non-metrizable $CT_3$-space or a non-metrizable $CT_4$-space? Is every compact $CT_2$-space necessarily $CT_3$ or $CT_4$?
\end{abstract}

\begin{keyword}  $CT_i$; Cubes; Spheres; Sequence fan; Ordinal spaces.
\\
  {\it 2020 MSC:54B20, 54D10, 54D15}
 \end{keyword}
\end{frontmatter}

\section{Definitions}

Most of our notation and terminology follows Engelking \cite{Engelking-1989}.
For example, $|A|$ is the cardinal of a set $A$, and
\[  B(x,\delta)=\{y\in X\mid d(x,y)<\delta\}\]
 for a metric space $(X,d)$, $x\in X$ and $\delta>0$. Moreover, $w(X)$ and $d(X)$ are the weight and density of a topological space $X$, respectively. In   what follows, we list several differences from the book. For a map $f\colon X\to Y_1\times Y_2$, we use $f_1$ and $f_2$ to denote the compositions of $f$ with the projections from $Y_1\times Y_2$ onto $Y_1$ and $Y_2$, respectively. We use $A\subset B$ or $B\supset A$ to express that $A$ is a proper subset of $B$, and $A\subseteq B$ or $B\supseteq A$ to express that $A$ is a subset of $B$. We use $\cl A$ to denote the closure of $A$ in a topological space.
 A clopen set in a space means that it is a closed and open set in the space. {\bf We assume that all topological spaces are $T_1$ and non-empty.}

 For a space $X$, let $\Cld(X)$ be the class of all non-empty  closed sets in $X$.
And let
\[
\begin{split}
 \mathcal{T}_4(X)=\{(F_1,F_2)\in \Cld(X)^2\mid F_1\cap F_2=\emptyset\};\\
\mathcal{T}_3(X)=\{(F_1,F_2)\in \mathcal{T}_4(X)\mid F_1\mbox { is a singleton}\};\\
\mathcal{T}_2(X)=\{(F_1,F_2)\in \mathcal{T}_4(X)\mid F_1\mbox { and } F_2\mbox { are singletons}\}.
\end{split}
\]
Recall that for $i=2,3,4$, a space $X$ is called $T_i$ if for each $(F_1,F_2)\in \mathcal{T}_i$, there exists a pair of disjoint open sets $U_1,U_2$  such that $F_j\subseteq U_j$ for $j=1,2$. Instead of using open sets, we may reformulate these concepts in terms of closed sets.  A space $X$ is called $T_i$ if for each $(F_1,F_2)\in \mathcal{T}_i$, there exists a pair of  closed sets $\phi_1(F_1,F_2)$ and $\phi_2(F_1,F_2)$ such that $\phi_1(F_1,F_2)\cup \phi_2(F_1,F_2)=X$ and $F_j\cap \phi_j(F_1,F_2)=\emptyset$ for $j=1,2$.

Hence, for a natural topology $\tau$ on $\Cld(X)$, we can define the following concepts:
\begin{definition} For $i=2,3,4$, a topological space $X$ is called {\bf continuously $T_i$ with respect to the topology $\tau$} ($CT_i$,  in short), if there exists a continuous map $\phi:\mathcal{T}_i(X)\to \Cld(X)^2$, where $\mathcal{T}_i(X)$ is endowed with the subspace topology of the product space $(\Cld(X),\tau)^2$, such that for each $(F_1,F_2)\in \mathcal{T}_i(X)$, $\phi_1(F_1,F_2)\cup \phi_2(F_1,F_2)=X$ and
$F_j\cap \phi_j(F_1,F_2)=\emptyset$ for $j=1,2$. Then, we call the map $\phi$ a  {\bf continuous $T_i$ map for the space $X$
with respect to the topology $\tau$} ( a {\bf $CT_i$-map}, in short).
\end{definition}
 We can also define:

\begin{definition} A topological space $X$ is called {\bf continuously $T_1$} {\bf with respect to the topology} $\tau$  ($CT_1$, in short), if  there exists a continuous map $\phi:X^2\setminus \triangle(X)\to \Cld(X)^2$, where
\[  \triangle(X)=\{(x,x)|x\in X\}\]
 is the {\bf diagonal of}  $X$, such that
$x_1\not\in \phi_1(x_1,x_2)$, $x_2\not\in \phi_2(x_1,x_2)$ and $x_1\in\phi_{2}(x_1,x_2)$, $x_2\in\phi_{1}(x_1,x_2)$  for each $(x_1,x_2)\in X^2\setminus \triangle(X)$.  Then, we call the map $\phi$  a  {\bf continuous $T_1$ map for the space $X$ with respect to the topology $\tau$} ( a {\bf  $CT_1$-map}, in short).
\end{definition}

For a topological space $X$ and a set $U$ in $X$, let
\begin{gather*}
U^- = \{ F \in \Cld(X) \mid F \cap U \neq \emptyset \},\\
U^+ = \{ F \in \Cld(X) \mid F \subseteq U\}.
\end{gather*}
The topology on $\Cld(X)$ which is generated by $\{U^-,U^+\mid U$ is open in $X\}$ as a subbase is called the {\bf Vietoris topology} on $\Cld(X)$.
Then, $\{\langle U_1,U_2,\cdots, U_n\rangle\mid U_i\mbox{ is open in }X \mbox { for } i\leq n, n\in\N\},$ where
\[  \langle U_1,U_2,\cdots, U_n\rangle=\bigcap_{i=1}^n U_i^-\cap (\bigcup_{i=1}^n U_i)^+,\]
is a base for the  Vietoris topology on $\Cld(X)$.

Let $(X,d)$ be a metric space. For any $A\in\Cld(X),$ $x\in X$ and $\varepsilon>0$, let
\[
d(x,A) = \inf_{a\in A} d(x,a), \quad
B(A,\varepsilon)=\{x\in X\mid d(x,A)<\varepsilon\}.
\]
Recall that the \textbf{Hausdorff distance} $d_H$ on $\Cld(X)$ is defined as follows:
For any $A,B\in\Cld(X)$, the Hausdorff distance between $A$ and $B$ is
\[
d_H(A,B) =\inf\{\varepsilon>0\mid B\subseteq B(A,\varepsilon)
~{\rm and}~ A\subseteq B(B,\varepsilon)\}\leq +\infty.
\]
It is well known that if $(X,d)$ is compact, the Hausdorff distance is indeed a metric on $\Cld(X)$ and induces the Vietoris topology on $\Cld(X)$.

In what follows, for a topological space $X$, when discussing the properties of  $CT_i$ ( for $i=1,2,3,4$),
we always endow $\Cld(X)$ with the Vietoris topology.

\begin{rem} Originally, by the above definitions,
$\mathcal{T}_2(X) = \big\{(\{a\},\{b\}) \mid a,b\in X \text{ and } a\neq b\big\}$ is a subspace of $\Cld(X)^2$.
However, for notational convenience in the proofs below involving $\mathcal{T}_2(X)$,
we generally write $\mathcal{T}_2(X)$ as $\{(a,b)\in X^2 \mid a\neq b\}=X^2\setminus \triangle(X)$ and regard it as a subspace of the product space $X^2$.
This is acceptable because these two spaces are obviously homeomorphic. In some of the proofs below,
$\mathcal{T}_3(X)$ will sometimes be used in a similar manner, and we shall use this identification without further comment.
\end{rem}

\section{Basic properties}

Trivially, every $CT_i$-space is $T_i$ for $i=1,2,3,4$, and every $CT_i$-space is $CT_{i-1}$ for $i=2,3,4$. Moreover, we have the following
simple theorem:

\begin{thm}\label{T_1-is-CT1} A topological space is $T_1$ if and only if it is  $CT_1$.
\end{thm}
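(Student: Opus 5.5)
The nontrivial direction is that every $T_1$-space is $CT_1$; the converse is immediate from the definition. Indeed, if $\phi$ is a $CT_1$-map, then for $x_1\neq x_2$ the closed set $\phi_1(x_1,x_2)$ contains $x_2$ but not $x_1$. Hence $X\setminus\phi_1(x_1,x_2)$ is an open set containing $x_1$ and missing $x_2$, which is the $T_1$ axiom. (In any case the paper assumes throughout that all spaces are $T_1$.)

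For the forward direction I would use the simplest candidate: put
\[\phi_1(x_1,x_2)=\{x_2\},\qquad \phi_2(x_1,x_2)=\{x_1\}.\]
Since $X$ is $T_1$, singletons are closed, so these are elements of $\Cld(X)$. The separation requirements hold trivially: $x_1\notin\{x_2\}$, $x_2\notin\{x_1\}$, $x_1\in\phi_2(x_1,x_2)$ and $x_2\in\phi_1(x_1,x_2)$. Note that the $CT_1$ definition does not require $\phi_1\cup\phi_2=X$, so singletons are allowed.

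The only thing left to check is continuity of $\phi$ on $X^2\setminus\triangle(X)$ with the Vietoris topology on $\Cld(X)$. The components of $\phi$ are the coordinate projections $X^2\setminus\triangle(X)\to X$ composed with the map $s\colon X\to\Cld(X)$, $x\mapsto\{x\}$. So it suffices to show that $s$ is continuous. For this I check preimages of subbasic sets: for open $U\subseteq X$,
\[s^{-1}(U^-)=\{x\mid \{x\}\cap U\neq\emptyset\}=U,\qquad s^{-1}(U^+)=\{x\mid\{x\}\subseteq U\}=U.\]
Both preimages are open, so $s$ is continuous (in fact an embedding), and therefore $\phi$ is continuous into $\Cld(X)^2$.

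I do not expect a real obstacle. The only point needing care is recognizing that the $CT_1$ definition imposes no covering condition, so singletons suffice. If such a condition were intended, one would instead have to work with sets like $X\setminus U$, and continuity would then be genuinely harder.
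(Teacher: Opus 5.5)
Your proposal is correct and follows the paper's proof: you use the same map $\phi(x_1,x_2)=(\{x_2\},\{x_1\})$. You also supply the continuity check that the paper leaves as ``not hard to verify'', namely $s^{-1}(U^-)=s^{-1}(U^+)=U$ for $s(x)=\{x\}$, and the easy converse direction.
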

\begin{proof}It suffices to prove that if $X$ is $T_1$, then $X$ is  $CT_1$.
	We define the natural map $	\phi: X^2 \setminus \triangle(X) \to \Cld(X)^2$ by,  for $(x,y) \in X^2 \setminus \triangle(X)$
	\[
	\phi(x,y) = (\{y\}, \{x\}).
\]
	Since $X$ is $T_1$, $\phi_1(x,y)=\{y\}$ and $\phi_2(x,y)=\{x\}$ are closed in  $X$. Moreover, it is trivial that $x \notin \phi_1(x,y)$, $x \in \phi_2(x,y)$,
	$y \notin \phi_2(x,y)$ and $y \in \phi_1(x,y)$. It is not hard to verify that $\phi$ is continuous.
Thus, $\phi$ is a $CT_1$-map for $X$ and hence $X$ is  $CT_1$.
\end{proof}

While the classical separation axioms possess rich permanence properties, the continuous separation axioms proposed in this paper are currently known to satisfy only the following permanence properties (Theorems \ref{clopen-subspace} and \ref{sum}). Define $\Cld^*(X)=\Cld(X)\oplus\{\emptyset\}$.

\begin{lem}\label{cap-cup}
For each space $X$ and every clopen subspace $Y$ of $X$, the map $f:\Cld(X)\to \Cld^*(Y)$ defined by $A\mapsto A\cap Y$ is continuous, and the union map $\cup:\Cld(X)^2\to\Cld(X)$ is also continuous.
\end{lem}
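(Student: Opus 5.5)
The plan is to verify continuity on subbasic open sets in both cases. Since the Vietoris topology is generated by the sets $U^-$ and $U^+$, it is enough to show that preimages of subbasic open sets are open. For the intersection map $f$, the target is $\Cld^*(Y)=\Cld(Y)\oplus\{\emptyset\}$, so I first handle the isolated point $\emptyset$ and then the subbasic sets of $\Cld(Y)$. Here the Vietoris sets on $\Cld(Y)$ are built from sets $V$ open in $Y$; because $Y$ is clopen, such $V$ are exactly the open sets of $X$ contained in $Y$.

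For $f$ I will compute the three relevant preimages in turn.
\begin{itemize}
\item The point $\emptyset$: $f^{-1}(\{\emptyset\})=\{A\mid A\cap Y=\emptyset\}=(X\setminus Y)^+$. This is open because $X\setminus Y$ is open, and this is where closedness of $Y$ is used.
\item A set $V^-$ with $V\subseteq Y$ open: $f^{-1}(V^-)=\{A\mid A\cap Y\cap V\neq\emptyset\}=V^-$ in $\Cld(X)$, which is open since $V$ is open in $X$. This step uses openness of $Y$.
\item A set $V^+$ with $V\subseteq Y$ open, taken inside $\Cld(Y)$: its preimage is $\{A\mid \emptyset\neq A\cap Y\subseteq V\}$. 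This equals $(V\cup(X\setminus Y))^+\cap Y^-$, which is open because $V\cup (X\setminus Y)$ is open and $Y$ is open.
\end{itemize}
One should also note that $A\cap Y$ is indeed closed in $Y$, so $f$ is well defined.

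For the union map I check the two kinds of subbasic sets.
\begin{itemize}
\item For $U$ open, $\cup^{-1}(U^-)=(U^-\times\Cld(X))\cup(\Cld(X)\times U^-)$, which is open in the product.
\item For $U$ open, $\cup^{-1}(U^+)=U^+\times U^+$, since $A\cup B\subseteq U$ holds if and only if both $A\subseteq U$ and $B\subseteq U$. This is also open.
\end{itemize}
Finally, $A\cup B$ is a nonempty closed set, so $\cup$ indeed maps into $\Cld(X)$.

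The only step needing care is the $V^+$ preimage for $f$. There the nonemptiness condition has to be split off as $Y^-$, and the complement $X\setminus Y$ has to be added to $V$ so that points of $A$ outside $Y$ are permitted. The remaining steps are routine.
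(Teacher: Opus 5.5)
Your proof is correct and takes the same route as the paper: you check preimages of subbasic Vietoris sets, getting $V^-$ and $(V\cup(X\setminus Y))^+$ for $f$, and $(U^-\times\Cld(X))\cup(\Cld(X)\times U^-)$ and $U^+\times U^+$ for $\cup$. The only difference is bookkeeping. The paper lets its sets $V_Y^+$ contain $\emptyset$, so the isolated point is covered by $\{\emptyset\}=\emptyset_Y^+$ (take $V=\emptyset$) and no $Y^-$ factor is needed, whereas you handle $\{\emptyset\}$ separately and intersect with $Y^-$; both describe the same topology on $\Cld(Y)\oplus\{\emptyset\}$.
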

\begin{proof}
For any open set $V \subseteq Y$, since $Y$ is a clopen subset of $X$, $V$ and  $V\cup(X\sm Y)$ are also open in $X$.
Let $V_Y^-=\{ B \in \Cld^*(Y)  \mid B \cap V \neq \emptyset \}, \quad
V_Y^+ = \{ B \in \Cld^*(Y)  \mid B \subseteq V\}.
$
Then
\[  	f^{-1}(V_Y^-) = \{ A \in \Cld(X) \mid (A \cap Y) \cap V\neq \emptyset \}
= \{ A \in \Cld(X) \mid A \cap V \neq \emptyset \}
= V^-,
\]
\[ 	f^{-1}(V_Y^+) = \{ A \in \Cld(X) \mid A \cap Y\subseteq V\}
	= \{ A \in \Cld(X) \mid A \subseteq V\cup (X\setminus Y)\}
	=\bigl(V\cup (X\setminus Y)\bigr)^+.\]
Hence, $f$ is continuous.
Moreover,
	\[
	\quad \cup^{-1}(V^-) = \big(V^- \times \operatorname{Cld}(X)\big) \cup \big(\operatorname{Cld}(X) \times V^-\big) \text{ and } \cup^{-1}(V^+) = V^+ \times V^+ ,
	\]
	both of them are open in $\Cld(X) \times \Cld(X)$. Hence, the union map $\cup$ is continuous.
\end{proof}

\begin{lem}\label{extension}
Let $X$ be a $CT_3$-space and $\phi:\mathcal{T}_3(X)\to \Cld(X)^2$ be a  $CT_3$-map.
 Define
\[  \widetilde{\mathcal{T}_3}(X)=\mathcal{T}_3(X)\oplus  (X\times\{\emptyset\}).\]
An extension $\widetilde{\phi}:\widetilde{\mathcal{T}_3}(X)\to \Cld^*(X)^2$
of $\phi$  is defined as follows: for any $x\in X$,
 \[   \widetilde{\phi}(x,\emptyset)=(\emptyset,X).\]
Then  $\widetilde{\phi}$ is continuous.
\end{lem}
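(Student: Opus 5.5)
Since $\widetilde{\mathcal{T}_3}(X)$ is a topological sum, continuity of $\widetilde{\phi}$ reduces to checking continuity on each summand separately, because each summand is clopen in the sum. On the summand $\mathcal{T}_3(X)$ the map $\widetilde{\phi}$ agrees with $\phi$. The only thing to check there is that the inclusion $\Cld(X)\hookrightarrow \Cld^*(X)=\Cld(X)\oplus\{\emptyset\}$ is continuous. This holds because $\Cld(X)$ is a clopen summand of $\Cld^*(X)$ carrying its own Vietoris topology. So $\widetilde{\phi}|_{\mathcal{T}_3(X)}$ is the composition of the continuous map $\phi$ with the continuous inclusion $\Cld(X)^2\hookrightarrow \Cld^*(X)^2$, and is therefore continuous.

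On the summand $X\times\{\emptyset\}$ the map $\widetilde{\phi}$ is constant, with value $(\emptyset,X)\in\Cld^*(X)^2$. Here $\emptyset$ is the isolated point of $\Cld^*(X)$ and $X\in\Cld(X)$. A constant map is continuous, so nothing further is needed. I would note, however, that the topology on $X\times\{\emptyset\}$ is the one it inherits as a subspace of $X\times\Cld^*(X)$, which is homeomorphic to $X$; this matters only for the remark below.

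The argument then rests on the pasting principle for topological sums: a map out of $A\oplus B$ is continuous if and only if its restrictions to $A$ and to $B$ are continuous. Applying this finishes the proof.

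There is essentially no obstacle. The only point requiring care is making precise how $\widetilde{\mathcal{T}_3}(X)$ is topologized. If it is instead viewed as the subspace $\mathcal{T}_3(X)\cup(X\times\{\emptyset\})$ of $\Cld^*(X)^2$ (with the identification of $\mathcal{T}_3(X)$ with pairs $(x,F)$), the two pieces are still clopen in it. This is because $\{\emptyset\}$ is clopen in $\Cld^*(X)$, so the second coordinate separates the pieces. The same pasting argument therefore applies in either reading.
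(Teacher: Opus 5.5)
Your proposal is correct and is essentially the paper's argument: the paper likewise notes that $\widetilde{\phi}$ is constant on the summand $X\times\{\emptyset\}$ and agrees with $\phi$ on $\mathcal{T}_3(X)$, so continuity follows from pasting over the topological sum. Your explicit check that the inclusion $\Cld(X)^2\hookrightarrow\Cld^*(X)^2$ is continuous spells out a step the paper leaves implicit.
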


\begin{proof}
The restriction of $\widetilde{\phi}$  to $X\times\{\emptyset\}$
is a constant map and hence, it is continuous. Thus,  $\widetilde{\phi}$ is continuous.
\end {proof}

\begin{lem}\label{extension-4}
Let $X$ be a $CT_4$-space and $\phi:\mathcal{T}_4(X)\to \Cld(X)^2$ be a  $CT_4$-map.
 Define
\[  \widetilde{\mathcal{T}_4}(X)=\mathcal{T}_4(X)\oplus (\{\emptyset\}\times \Cld(X))\oplus (\Cld(X)\times\{\emptyset\})\oplus \{(\emptyset,\emptyset)\}.\]
An extension $\widetilde{\phi}:\widetilde{\mathcal{T}_4}(X)\to \Cld^*(X)^2$ of $\phi$, is defined as follows: for any $A,B\in \Cld(X)$,
 \[  \widetilde{\phi}(\emptyset,B)=(X,\emptyset),~~ \widetilde{\phi}(A,\emptyset)=(\emptyset,X),~~ \widetilde{\phi}(\emptyset,\emptyset)=(X,X).\]
Then, $\widetilde{\phi}$ is continuous.
\end{lem}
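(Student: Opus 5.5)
The plan is to copy the argument of Lemma \ref{extension}. The domain $\widetilde{\mathcal{T}_4}(X)$ is defined as a topological sum of four pieces. A map out of a topological sum is continuous exactly when its restriction to each summand is continuous. So it suffices to check continuity on $\mathcal{T}_4(X)$, on $\{\emptyset\}\times \Cld(X)$, on $\Cld(X)\times\{\emptyset\}$, and on the point $\{(\emptyset,\emptyset)\}$ separately.

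On the summand $\mathcal{T}_4(X)$, the map $\widetilde{\phi}$ agrees with $\phi$. The only thing to verify is that $\phi$ stays continuous when its codomain $\Cld(X)^2$ is regarded inside $\Cld^*(X)^2$. This holds because $\Cld^*(X)=\Cld(X)\oplus\{\emptyset\}$, so $\Cld(X)$ is an open subspace of $\Cld^*(X)$. Consequently $\Cld(X)^2$ is a subspace of $\Cld^*(X)^2$, and composing $\phi$ with the inclusion is continuous.

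On the summands $\{\emptyset\}\times\Cld(X)$ and $\Cld(X)\times\{\emptyset\}$, the map $\widetilde{\phi}$ is constant, with values $(X,\emptyset)$ and $(\emptyset,X)$ respectively. Constant maps are continuous. On the one-point summand $\{(\emptyset,\emptyset)\}$, any map is continuous.

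The only point needing a word of care is that these pieces really are open in $\widetilde{\mathcal{T}_4}(X)$, and this is built into the definition via $\oplus$. Combining the four restrictions then gives continuity of $\widetilde{\phi}$, so I expect no genuine obstacle.
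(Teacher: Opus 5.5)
Your proposal is correct and matches the paper's proof. The paper argues by analogy with Lemma \ref{extension}: $\widetilde{\phi}$ equals $\phi$ on $\mathcal{T}_4(X)$ and is constant on each remaining summand of the topological sum, hence continuous. Your remark that $\Cld(X)^2$ is a subspace of $\Cld^*(X)^2$ only makes explicit a step the paper leaves implicit.
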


\begin{proof}
The proof is analogous to that of Lemma \ref{extension}.
\end{proof}
\begin{thm}\label{clopen-subspace}
For $i=1,2,3,4$, let $X$ be a $CT_i$-space and $Y$ a clopen subspace of $X$. Then $Y$ is also $CT_i$.
\end{thm}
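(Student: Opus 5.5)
The plan is to restrict a $CT_i$-map for $X$ to pairs of closed sets in $Y$ and intersect the output with $Y$, using Lemma \ref{cap-cup} for continuity. The case $i=1$ is immediate from Theorem \ref{T_1-is-CT1}, since a subspace of a $T_1$-space is $T_1$. So let $i\in\{2,3,4\}$ and let $\phi$ be a $CT_i$-map for $X$.

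First, because $Y$ is closed in $X$, $\Cld(Y)\subseteq\Cld(X)$. I will check that the Vietoris topology of $Y$ coincides with the subspace topology inherited from $\Cld(X)$. For $V$ open in $X$, the traces on $\Cld(Y)$ are $V^-\cap\Cld(Y)=(V\cap Y)^-_Y$ and $V^+\cap\Cld(Y)=(V\cap Y)^+_Y$. Conversely, an open $W\subseteq Y$ is open in $X$ because $Y$ is open. Hence $\mathcal{T}_i(Y)$ is a subspace of $\mathcal{T}_i(X)$, and $\phi|_{\mathcal{T}_i(Y)}$ is continuous into $\Cld(X)^2$.

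Next, define $\psi_j(F_1,F_2)=\phi_j(F_1,F_2)\cap Y$ for $j=1,2$. The separation properties transfer directly:
\[
\psi_1\cup\psi_2=(\phi_1\cup\phi_2)\cap Y=Y,\qquad F_j\cap\psi_j(F_1,F_2)\subseteq F_j\cap\phi_j(F_1,F_2)=\emptyset .
\]
By Lemma \ref{cap-cup}, the map $A\mapsto A\cap Y$ from $\Cld(X)$ to $\Cld^*(Y)$ is continuous. So $\psi$ is continuous into $\Cld^*(Y)^2$.

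The main obstacle is that $\psi_j$ might be empty, in which case it is not in $\Cld(Y)$. I will show emptiness is harmless. Since $\Cld^*(Y)=\Cld(Y)\oplus\{\emptyset\}$, the set $\{(F_1,F_2):\psi_1(F_1,F_2)=\emptyset\}$ is clopen in $\mathcal{T}_i(Y)$, and likewise for $\psi_2$. If $\psi_1(F_1,F_2)=\emptyset$, then $\psi_2(F_1,F_2)=Y$, which contradicts $F_2\cap\psi_2=\emptyset$ with $F_2\neq\emptyset$. The symmetric argument shows $\psi_2$ is never empty either. Therefore $\psi$ actually maps into $\Cld(Y)^2$, and it is continuous there since $\Cld(Y)$ is clopen in $\Cld^*(Y)$.

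Thus $\psi$ is a $CT_i$-map for $Y$, and $Y$ is $CT_i$.
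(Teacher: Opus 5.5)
Your proposal is correct and follows the paper's proof. Like the paper, you compose the restriction of a $CT_i$-map for $X$ with the map $A\mapsto A\cap Y$ from Lemma \ref{cap-cup}, and you handle $i=1$ via Theorem \ref{T_1-is-CT1}; beyond that, you also justify two points the paper leaves implicit, namely that $\mathcal{T}_i(Y)$ carries the subspace topology from $\mathcal{T}_i(X)$ and that neither $\phi_j(F_1,F_2)\cap Y$ can be empty.
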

\begin{proof} Let $f:\Cld(X)\to \Cld^*(Y)$ be the map defined in Lemma \ref{cap-cup}.
For $i=2,3,4$, if $\phi:\mathcal{T}_i(X)\to \Cld(X)^2$ is a $CT_i$-map for $X$, then
 it follows from Lemma \ref{cap-cup} that the composite map $(f\times f)\circ\phi|_{\mathcal{T}_i(Y)}:\mathcal{T}_i(Y)\to \Cld^*(Y)^2$ is a $CT_i$-map for $Y$, since the image of the composite map is included in $\Cld(Y)^2$. For $i=1$, the statement follows immediately from Theorem \ref{T_1-is-CT1}.
 \end{proof}

\begin{thm}\label{sum}
Let $\{X_s\mid s\in S\}$ be a family of spaces. For $i=1,2,3$, $X=\bigoplus_{s\in S} X_s$ is  $CT_i$ if and only if each $X_s$ is $CT_i$.
Moreover, $X=\bigoplus_{s\in S} X_s$ is $CT_4$ if and only if $S$ is finite and each $X_s$ is $CT_4$.
\end{thm}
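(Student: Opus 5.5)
The "only if" directions follow from Theorem \ref{clopen-subspace}, since each $X_s$ is clopen in $X$. For $i=1$ the whole statement is immediate from Theorem \ref{T_1-is-CT1}, because a sum of $T_1$-spaces is $T_1$.

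For the converse with $i=2,3$, fix $CT_i$-maps $\phi^s$ for each $X_s$ and glue them. For $(F_1,F_2)\in\mathcal{T}_i(X)$, write $F_j^s=F_j\cap X_s$; by Lemma \ref{cap-cup} the map $F\mapsto F\cap X_s$ into $\Cld^*(X_s)$ is continuous. Define
\[
\psi_j(F_1,F_2)=\bigcup_{s\in S}\widetilde{\phi^s}_j(F_1^s,F_2^s),
\]
where $\widetilde{\phi^s}$ extends $\phi^s$ to arguments with an empty coordinate, in the spirit of Lemmas \ref{extension} and \ref{extension-4}. Concretely, set $\widetilde{\phi^s}(A,\emptyset)=(\emptyset,X_s)$ and $\widetilde{\phi^s}(\emptyset,B)=(X_s,\emptyset)$. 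We also need $\widetilde{\phi^s}(\emptyset,\emptyset)=(X_s,X_s)$; for $i=2$ this also covers the case where the two points lie in different summands.

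\begin{itemize}
\item \emph{Separation and covering.} In each summand the two pieces cover $X_s$ and miss $F_j^s$ appropriately. Hence $\psi_1\cup\psi_2=X$ and $F_j\cap\psi_j=\emptyset$, and each $\psi_j$ is closed, being a union of closed sets from a discrete family of clopen pieces. It is nonempty whenever the relevant $F$'s are nonempty.
\item \emph{Continuity.} The pattern of which $F_j^s$ are empty is locally constant, since $X_s^-$ is clopen in $\Cld(X)$. On each piece, $\psi$ is assembled from continuous maps. The remaining point is that an arbitrary (possibly infinite) union of maps into the $\Cld^*(X_s)$ is continuous into $\Cld(X)$. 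This works because $\Cld(X)$ embeds in the product $\prod_s\Cld^*(X_s)$ via $A\mapsto(A\cap X_s)_s$, and I will check it against $U^-$ and $U^+$.
\item \emph{The main obstacle.} $U^+$ involves infinitely many summands simultaneously, so the embedding above is not a homeomorphism in the product topology. Instead I would verify continuity directly. For $i=3$, write $F_1=\{x\}$ with $x\in X_{s_0}$. Then $\psi_1=\phi^{s_0}_1(x,F_2^{s_0})\cup\bigcup_{s\ne s_0,F_2^s\neq\emptyset}\emptyset$, and $\psi_2=\phi^{s_0}_2(x,F_2^{s_0})\cup(X\setminus X_{s_0})$. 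So only one summand varies and continuity follows from Lemma \ref{cap-cup}, provided $\widetilde{\phi^{s}}$ is defined to be $(\emptyset,X_s)$ whenever $F_1^s=\emptyset$. For $i=2$ the argument is the same, with both points in at most two summands.
\end{itemize}

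For $CT_4$ with $S$ finite, the same gluing uses finitely many summands. Continuity then follows from Lemma \ref{cap-cup}, Lemma \ref{extension-4} and finitely many applications of the continuous union map.

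For $S$ infinite, I would show $X$ is not $CT_4$ by exhibiting a failure of countable compactness–type behaviour directly. Pick distinct $s_n$, points $x_n\in X_{s_n}$, and a further summand point $y\in X_{s_*}$. Consider
\[
F^n=\{x_m: m\ge n\},\qquad F=\{y\}.
\]
The family $F^n$ does not converge in the Vietoris topology. Instead I would use $A_n=\{x_0,\dots,x_n\}$, which converges in $\Cld(X)$ to $D=\{x_m:m\in\N\}$ (a closed discrete set). Then $(A_n,\{y\})\to(D,\{y\})$, so $\phi_2(A_n,\{y\})\to\phi_2(D,\{y\})$, which is a closed set missing $D$.

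To derive a contradiction I would pair instead $(A_n, D\setminus A_n)$, with $D\setminus A_n$ closed. The first coordinate tends to $D$; the second stays nonempty but does not converge, so some care is needed. I therefore expect to instead use the forthcoming result that every $CT_4$-space is countably compact. $X$ contains the infinite closed discrete set $D$, so it is not countably compact. This is the delicate step: if I must avoid forward references, I will try to adapt that countable-compactness argument to this specific $D$.
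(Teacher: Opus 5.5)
\paragraph{Agreement with the paper.} Your treatment of the ``only if'' directions, of $i=1$, and of finite sums for $CT_4$ (extensions as in Lemma \ref{extension-4} plus finitely many unions via Lemma \ref{cap-cup}) matches the paper. So does your treatment of infinite sums for $CT_4$: an infinite sum contains an infinite closed discrete set, so Theorem \ref{non-T4} applies. The paper also uses that forward reference, so you need not adapt the countable-compactness argument.

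\paragraph{The gap.} The genuine problem is the ``if'' direction for $i=3$ when $S$ is infinite: neither of your two conventions works.

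With $\widetilde{\phi^s}(\emptyset,\emptyset)=(X_s,X_s)$ you get $\psi_2(x,B)\supseteq\bigcup\{X_s\mid s\neq s_0,\ B\cap X_s=\emptyset\}$, and this is discontinuous. Take $X=\N$ with singleton summands, $x=0$, and $B_n=\{1,\dots,n\}\to B=\N\setminus\{0\}$. Then $\psi_2(0,B_n)=\{0\}\cup\{n+1,n+2,\dots\}$ never enters the neighbourhood $\{0\}^+$ of $\psi_2(0,B)=\{0\}$. So the claim in your second bullet, that an arbitrary union of the summand maps is continuous, is false.

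The repair in your third bullet sets $\widetilde{\phi^s}=(\emptyset,X_s)$ whenever $F_1^s=\emptyset$. This puts $X\setminus X_{s_0}$ into $\psi_2$, which is required to miss $F_2=B$, so it fails whenever $B\not\subseteq X_{s_0}$. When $B\cap X_{s_0}=\emptyset$ it gives $\psi_1=\widetilde{\phi^{s_0}_1}(x,\emptyset)=\emptyset\notin\Cld(X)$ and $\psi_2=X\supseteq B$. This already happens for $X=\{a\}\oplus\{b\}$ and the pair $(\{a\},\{b\})$. The repair also contradicts your own earlier rule $\widetilde{\phi^s}(\emptyset,B)=(X_s,\emptyset)$.

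\paragraph{The missing idea.} Put every other summand entirely on the side that only has to avoid the point $x$. That is, use $(X_s,\emptyset)$ for all $s\neq s_0$, regardless of $B\cap X_s$. This is the paper's map
\[
\phi(x,B)=\Bigl(\widetilde{\phi^{s_0}_1}(x,B\cap X_{s_0})\cup(X\setminus X_{s_0}),\ \widetilde{\phi^{s_0}_2}(x,B\cap X_{s_0})\Bigr),
\]
with $\widetilde{\phi^{s_0}}(x,\emptyset)=(\emptyset,X_{s_0})$ from Lemma \ref{extension}. Then $x\notin\phi_1$, $\phi_2\subseteq X_{s_0}\setminus B$, and $\phi_1\cup\phi_2=X$. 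Since $x\mapsto s_0$ is locally constant and only the $s_0$-coordinate varies, continuity follows from Lemma \ref{cap-cup}.

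For $i=2$ your first convention does in fact work. The pair of summands containing the two points is locally constant, so only finitely many coordinates vary on each clopen piece.
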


\begin{proof}
Suppose $i=3$. The ``only if'' part follows from Theorem \ref{clopen-subspace}. Now, we show the ``if'' part.
For each $s\in S$, let $\phi^s:\mathcal{T}_3(X_s)\to\Cld(X_s)^2$ be a $CT_3$-map for the space $X_s$. By Lemma \ref{extension}, we can continuously
 extend them to $\widetilde{\phi^s}:\widetilde{\mathcal{T}_3}(X_s)\to \Cld^*(X_s)^2$. Moreover, we define $\phi:\mathcal{T}_3(X)\to\Cld(X)^2$ as
 follows: For $(x,B)\in\mathcal{T}_3(X)$, there exists a unique  $s_0\in S$ such that $x\in X_{s_0}$, let
\[
\phi(x,B)=\big(\widetilde{\phi_1^{s_0}}(x, B\cap X_{s_0})\cup\bigcup_{s\in S\setminus\{s_0\}} X_s,~\widetilde{\phi_2^{s_0}}(x, B\cap X_{s_0})\big).
\]
 It follows from Lemmas \ref{cap-cup} and \ref{extension} that $\phi:\mathcal{T}_3(X)\to\Cld(X)^2$ is continuous. Trivially, it also satisfies the other conditions. Hence, $X$ is $CT_3$.

The case $i=2$ is analogous.

The case $i=1$ follows from Theorem \ref{T_1-is-CT1}.

Now, we consider the case $i=4$.

To show the ``if'' part, it suffices to verify that $X_1\oplus X_2$ is $CT_4$ if $X_1$ and $X_2$ are $CT_4$.
Let $\phi^1:\mathcal{T}_4(X_1)\to\Cld(X_1)^2$ and $\phi^2:\mathcal{T}_4(X_2)\to\Cld(X_2)^2$
be  $CT_4$-maps for $X_1$ and $X_2$, respectively. By Lemma \ref{extension-4}, we can define continuous extensions
$\widetilde{\phi^1}:\widetilde{\mathcal{T}_4}(X_1)\to \Cld^*(X_1)^2$ of $\phi^1$ and $\widetilde{\phi^2}:\widetilde{\mathcal{T}_4}(X_2)\to \Cld^*(X_2)^2$  of $\phi^2$.
Now, we define $\phi:\mathcal{T}_4(X)\to \Cld(X)^2$ as follows:
\[
\begin{split}
\phi_1(A,B)=\widetilde{\phi_1^1}(A\cap X_1, B\cap X_1)\cup \widetilde{\phi_1^2}(A\cap X_2, B\cap X_2),\\
\phi_2(A,B)=\widetilde{\phi_2^1}(A\cap X_1, B\cap X_1)\cup \widetilde{\phi_2^2}(A\cap X_2, B\cap X_2).
\end{split}
\]
By Lemma \ref{cap-cup}, $\phi:\mathcal{T}_4(X)\to \Cld(X)^2$ is continuous. Clearly, it also satisfies the other conditions in the definition of $CT_4$.
Thus, $X$ is $CT_4$.

For the ``only if'' part, if $X$ is $CT_4$, using Theorem \ref{clopen-subspace}, each $X_s$ is $CT_4$. Note that any
infinite topological sum is not countably compact. It follows from Theorem \ref{non-T4} that $S$ is finite.
\end{proof}

\section{Examples of  $CT_4$-spaces}

Trivially, each finite space is  a  $CT_4$-space.
We now give more examples.

\begin{definition}
	Let $(X,d)$ be a metric space. We say that $(X,d)$ has the {\bf closure property} ({\bf CP}, in short) if
	for every pair $(A,B) \in \mathcal{T}_4(X)$,
	\[
	\operatorname{cl}\{ x \mid d(x,A) > d(x,B) \}
	= \{ x \mid d(x,A) \ge d(x,B) \}.
	\]
\end{definition}
Note that the CP depends on the choice of the metric $d$. For example, with the usual metric, $\II=[0,1]$ has the CP. However,
consider the space
\[  X=\overline{(-1,0)(0,1)}\cup \overline{(0,1)(0,-1)}\cup\overline{(0,-1)(1,0)},\]
 where $\overline{AB}$ is the segment with the endpoints $A$ and $B$, and
 	$X$ is equipped with the Euclidean metric of $\R^2$. Then $X$ is homeomorphic to $\II$ without the CP.

\begin{thm}\label{compact metric cp}Let $(X, d)$ be a compact metric space with the CP. Then $(X, d)$ is  $CT_4$.
\end{thm}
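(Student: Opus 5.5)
We define, for $(A,B)\in\mathcal{T}_4(X)$,
\[
\phi_1(A,B)=\{x\in X\mid d(x,A)\ge d(x,B)\},\qquad \phi_2(A,B)=\{x\in X\mid d(x,A)\le d(x,B)\}.
\]
We then check that this pair is a $CT_4$-map.

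\textbf{Step 1: the separation conditions.} Both sets are closed, since $x\mapsto d(x,A)-d(x,B)$ is continuous. Their union is $X$. If $a\in A$, then $d(a,A)=0<d(a,B)$, because $B$ is closed and $a\notin B$; hence $a\notin\phi_1(A,B)$. Symmetrically, $B\cap\phi_2(A,B)=\emptyset$. Both sets are non-empty, since $B\subseteq\phi_1(A,B)$ and $A\subseteq\phi_2(A,B)$. So $\phi$ maps into $\Cld(X)^2$.

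\textbf{Step 2: reduction to a metric estimate.} Since $X$ is compact, the Vietoris topology on $\Cld(X)$ is induced by $d_H$. It therefore suffices to prove sequential continuity: if $(A_n,B_n)\to(A,B)$ in $\mathcal{T}_4(X)$, then $\phi_1(A_n,B_n)\to\phi_1(A,B)$ in $d_H$. The case of $\phi_2$ is symmetric, because the CP applied to the pair $(B,A)$ gives the corresponding closure identity. Put $f_n(x)=d(x,A_n)-d(x,B_n)$ and $f(x)=d(x,A)-d(x,B)$. Since $|d(x,A_n)-d(x,A)|\le d_H(A_n,A)$, we get $f_n\to f$ uniformly on $X$. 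Convergence in $d_H$ on a compact space is equivalent to Kuratowski convergence, so we must show two things: $\limsup\phi_1(A_n,B_n)\subseteq\phi_1(A,B)$, and every point of $\phi_1(A,B)$ is a limit of points $x_n\in\phi_1(A_n,B_n)$.

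\textbf{Step 3: upper semicontinuity.} Suppose $x_{n_k}\in\phi_1(A_{n_k},B_{n_k})$ and $x_{n_k}\to x$. Then $f_{n_k}(x_{n_k})\ge 0$. Uniform convergence together with continuity of $f$ gives $f(x)\ge0$, so $x\in\phi_1(A,B)$.

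\textbf{Step 4: lower semicontinuity, the main obstacle.} The difficulty lies at points with $f(x)=0$: a small perturbation may make $f_n<0$ near $x$, and without further hypotheses the set $\phi_1$ can collapse in the limit. This is precisely where the CP enters. Take $x\in\phi_1(A,B)$ and $\varepsilon>0$. By the CP,
\[
x\in\cl\{y\mid f(y)>0\},
\]
so there is $y$ with $d(x,y)<\varepsilon$ and $f(y)=\delta>0$. By uniform convergence, $f_n(y)>0$ for all large $n$, so $y\in\phi_1(A_n,B_n)$. Hence $d(x,\phi_1(A_n,B_n))<\varepsilon$ eventually.

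To make this uniform in $x$, as Hausdorff convergence requires, we use compactness. Choose a finite $\varepsilon$-net $x^1,\dots,x^m$ of $\phi_1(A,B)$ and corresponding points $y^i$ with $f(y^i)>0$ and $d(x^i,y^i)<\varepsilon$. Let $n_0$ be such that $\sup_X|f_n-f|<\min_i f(y^i)$ for all $n\ge n_0$. Then for $n\ge n_0$, every point of $\phi_1(A,B)$ lies within $2\varepsilon$ of $\phi_1(A_n,B_n)$.

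Combining this with Step 3 gives Hausdorff convergence: if the other inclusion failed, compactness would yield a limit point of $\phi_1(A_{n},B_{n})$ outside $B(\phi_1(A,B),\varepsilon)$, contradicting Step 3. Hence $\phi$ is continuous, and $X$ is $CT_4$.
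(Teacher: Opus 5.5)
Your proof is correct and follows essentially the same route as the paper. You use the same map $\phi(A,B)=(E(A,B),E(B,A))$, the CP to obtain the lower half of Hausdorff convergence, and compactness together with a limit-point argument for the upper half. The only cosmetic differences are these: you recast the paper's Claim 1 (the sets $E_m$) as uniform convergence of $f_n=d(\cdot,A_n)-d(\cdot,B_n)$, and you replace the subsequence contradiction of Claim 3 with a finite $\varepsilon$-net.
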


\begin{proof}
For $(A,B) \in \mathcal{T}_4(X)$, and $m \in \mathbb{N}$, where $\mathbb{N}$ is the set of natural numbers, let
\[
\begin{split}
E(A,B) = \{ x \in X \mid d(x,A) \ge d(x,B) \}, \\
E_m(A,B) = \{ x \in X \mid d(x,A) \ge d(x,B) + \tfrac{1}{m} \}.
\end{split}
\]
It is trivial to verify that $E(A,B)$ and  $E_m(A,B)$ are closed, and $E_m(A,B)\subseteq E(A,B)$.

Define $\phi: \mathcal{T}_4(X) \to \Cld(X)^2$ by
\[  \phi(A,B) =\left(E(A,B),E(B,A)\right), \mbox{ for  } (A,B) \in \mathcal{T}_4(X).\]
Clearly, $\phi_1(A,B)\cup \phi_2(A,B)=X$,
$A\cap \phi_1(A,B)=\emptyset$, and $B\cap \phi_2(A,B)=\emptyset.$
Hence, it remains only to prove that $\phi$ is continuous.

\medskip

\noindent
\textbf{Claim 1.} \textit {For all $(A,B), (A',B') \in \mathcal{T}_4(X)$, if $d_H(A',A) < \tfrac{1}{2m}$ and $d_H(B',B) < \tfrac{1}{2m}$, then
$E_m(A',B') \subseteq E(A,B)$.}

For any $x \in E_m(A',B')$, choose $a \in A$, $a' \in A'$ such that
\[  d(x,A) = d(x,a) \quad \text{and} \quad d(a,a') < \tfrac{1}{2m}.\]
Then
\begin{equation}\label{equ-1}
\begin{split}
d(x,A)=d(x,a) &\geq d(x,a')- d(a,a') \\
	&>d(x,A')- \tfrac{1}{2m}\\
	&\geq d(x,B')+\tfrac{1}{m}-\tfrac{1}{2m}.
\end{split}
\end{equation}
Choose $b' \in B'$ and $b \in B$ such that
\[
d(x,B') = d(x,b') \quad \text{and} \quad d(b,b') < \tfrac{1}{2m}.
\]
Then
\begin{equation*}
d(x,B') = d(x,b') \ge d(x,b) - d(b,b') > d(x,B) - \tfrac{1}{2m}.
\end{equation*}
It follows from Equation (\ref{equ-1}) that
\[
d(x,A)>d(x,B) - \tfrac{1}{2m} + \tfrac{1}{m} - \tfrac{1}{2m} = d(x,B).
\]
Thus $x \in E(A,B)$, that is, $E_m(A',B') \subseteq E(A,B)$. We have completed the proof of Claim 1.
\\

In the following, we show that $d_H(E(A_n,B_n),E(A,B))\to 0$ under the assumption that $(A_n,B_n)\to(A,B)$ (i.e., $d_H(A_n,A)\to 0$ and $d_H(B_n,B)\to 0$).

\medskip

\noindent
\textbf{Claim 2. (Pointwise convergence)} For every $x\in E(A,B)$, $d(x,E(A_n,B_n))\to 0.$

If $d(x,A)>d(x,B)$, choose $m$ such that $x\in E_m(A,B)$. For all sufficiently large $n$, we have $d_H(A_n,A)<\tfrac{1}{2m}$ and $d_H(B_n,B)<\tfrac{1}{2m}$; by Claim 1, this implies $E_m(A,B)\subseteq E(A_n,B_n)$, so,  $x\in E(A_n,B_n)$. It follows that  $d(x,E(A_n,B_n))=0$  for sufficiently large $n$.

If $d(x,A)=d(x,B)$, by CP we have
\[
x\in \operatorname{cl}\{z:d(z,A)>d(z,B)\}.
\]
Thus there exists a sequence $(x_k)$ such that $x_k\to x$ and $d(x_k,A)>d(x_k,B)$ for each $k$.

Now we show that $d(x,E(A_n,B_n))\to 0$. For arbitrary $\varepsilon>0$, choose $k_0$ such that $d(x,x_{k_0})<\varepsilon$. By the argument in the previous paragraph,
there exists $N$ such that $x_{k_0}\in E(A_n,B_n)$ when $n>N$. It follows that 
\[
d(x,E(A_n,B_n))\le d(x,x_{k_0})<\varepsilon,~~\mbox{if}~~n>N.
\]
Thus Claim 2 holds.

\medskip

\noindent
\textbf{Claim 3. (Uniform convergence)} $\sup_{x\in E(A,B)} d(x,E(A_n,B_n))\to0.$

Suppose not. Then there exist $\varepsilon>0$, a subsequence $(n_j)$, and a sequence of points $(x_j)$ in $E(A,B)$ such that
\[
d(x_j,E(A_{n_j},B_{n_j}))\ge \varepsilon
\]
for all $j$.
Since $X$ is compact and $E(A,B)$ is closed in $ X$, the sequence $(x_j)$ has a convergent subsequence; passing to a subsequence and relabeling, we may assume $x_j\to x$ for some $x\in E(A,B)$. Then for sufficiently large $j$,
\[
d(x,E(A_{n_j},B_{n_j}))\ge d(x_j,E(A_{n_j},B_{n_j}))-d(x_j,x)\ge \frac{\varepsilon}{2}.
\]
This contradicts Claim 2, which says $d(x,E(A_n,B_n))\to0$. Thus Claim 3 holds.

\medskip

\noindent
\textbf{Claim 4.} If $x_n\in E(A_n,B_n)$ and $x_n\to x$, then $x\in E(A,B)$.

Since $x_n\in E(A_n,B_n)$, we have
\[
d(x_n,A_n)\ge d(x_n,B_n).
\]
Because $d_H(A_n,A)\to0$ and $d_H(B_n,B)\to0$, the distance functions $d(\cdot,A_n):X\to\mathbb R$ and $d(\cdot,B_n):X\to\mathbb R$ converge uniformly on $X$ to $d(\cdot,A):X\to\mathbb R$ and $d(\cdot,B):X\to\mathbb R$, respectively. Hence $d(x_n,A_n)\to d(x,A), d(x_n,B_n)\to d(x,B).$ Therefore, $d(x,A)\ge d(x,B)$, that is, $x\in E(A,B)$. We have completed the proof of Claim 4.

Now, for every $\varepsilon>0$, by Claim 4 and the compactness of $X$, there exists $N_1>0$ such that for all $n\ge N_1$,
\[
E(A_n,B_n)\subseteq N_\varepsilon(E(A,B))=\{x\in X\mid d(x,E(A,B))<\varepsilon\}.
\]
Indeed, if not, there are a subsequence $(n_j)$ and points $x_j\in E(A_{n_j},B_{n_j})$ with $d(x_j,E(A,B))\ge\varepsilon$. By compactness, $(x_j)$ has a subsequence $(x_{j_k})$ which converges to $x$. It follows from Claim 4 that $x\in E(A,B)$. So, $d(x_j,E(A,B))\le d(x_j,x)\to 0$, a contradiction occurs.

On the other hand, from Claim 3 it follows that there exists $N_2>0$ such that for all $n\ge N_2$,
\[
E(A,B)\subseteq N_\varepsilon(E(A_n,B_n))=\{x\in X\mid d(x,E(A_n,B_n))<\varepsilon\}.
\]

Hence for all $n\ge \max\{N_1,N_2\}$, $E(A_n,B_n)\subseteq N_\varepsilon(E(A,B))$, $E(A,B)\subseteq N_\varepsilon(E(A_n,B_n))$.
That is, $d_H(E(A_n,B_n),E(A,B))\to0.$

Thus $\phi_1$ is continuous. Similarly, $\phi_2$ is also continuous.
\end{proof}

\begin{thm}\label{compact convex cp}Let $X$ be a compact convex set in a  Hilbert space $(L, \langle\cdot,\cdot \rangle)$. Then $(X,d)$ has the CP, where $d$ is
the metric on $X$ induced by the inner product.
\end{thm}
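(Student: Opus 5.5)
The inclusion $\operatorname{cl}\{x\mid d(x,A)>d(x,B)\}\subseteq\{x\mid d(x,A)\ge d(x,B)\}$ is automatic, because $x\mapsto d(x,A)-d(x,B)$ is continuous on $X$. So the whole task is the reverse inclusion. Take $x\in X$ with $d(x,A)=d(x,B)$; I will produce points of $X$ arbitrarily close to $x$ at which $d(\cdot,A)>d(\cdot,B)$. Since $X$ is compact and $A,B$ are closed, there are nearest points $a\in A$ and $b\in B$ with $d(x,a)=d(x,A)$ and $d(x,b)=d(x,B)$. Note $x\notin A\cap B=\emptyset$. Since $d(x,A)=d(x,B)$, if $x$ lay in either set it would lie in both; hence $r:=d(x,A)=d(x,B)>0$.

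The natural candidate is the segment from $x$ toward $b$: set $x_t=(1-t)x+tb$ for $t\in(0,1]$. By convexity $x_t\in X$, and $\|x_t-x\|=t\|b-x\|\to0$. It remains to check the strict inequality on this segment. On the $B$ side, $d(x_t,B)\le\|x_t-b\|=(1-t)r<r$. On the $A$ side, I want $d(x_t,A)$ to stay at least about $r$, which calls for a lower bound on $\|x_t-a'\|$ for every $a'\in A$.

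The key step is the estimate for $\|x_t-a'\|$. For any $a'\in A$ we have $\|x-a'\|\ge r$. By the triangle inequality,
\[ \|x_t-a'\|\ \ge\ \|x-a'\|-\|x_t-x\|\ \ge\ r-tr=(1-t)r. \]
This only gives $d(x_t,A)\ge(1-t)r\ge d(x_t,B)$, which is not strict. This is the main obstacle: triangle inequalities alone cannot give strictness, and the Hilbert structure has to be used. I would expand
\[ \|x_t-a'\|^2=\|x-a'\|^2+2t\langle x-a',\,b-x\rangle+t^2r^2 \]
and compare it with $\|x_t-b\|^2=(1-t)^2r^2$.

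The difference between these two quantities is
\[ \bigl(\|x-a'\|^2-r^2\bigr)+2t\bigl(\langle x-a',b-x\rangle+r^2\bigr)=\bigl(\|x-a'\|^2-r^2\bigr)+2t\langle b-a',\,b-x\rangle. \]
The first summand is $\ge0$. For the second, $\langle b-a',b-x\rangle$ is not obviously positive, so pointwise positivity may fail. I would therefore argue by compactness. If the strict inequality failed along a sequence $t_n\to0$, compactness of $A$ would give points $a_n\in A$ with $\|x_{t_n}-a_n\|=d(x_{t_n},A)\le(1-t_n)r$. Since $\|x-a_n\|\le\|x-x_{t_n}\|+\|x_{t_n}-a_n\|\le t_nr+(1-t_n)r=r$, each $a_n$ is itself a nearest point of $x$ in $A$, so $\|x-a_n\|=r$. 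In this case the difference above reduces to $2t_n\langle b-a_n,b-x\rangle$, and the failure means $\langle b-a_n,b-x\rangle\le0$. Passing to a limit $a_n\to a^*$ gives a nearest point $a^*$ of $x$ in $A$ with $\langle b-a^*,b-x\rangle\le0$.

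To rule this out I would use the freedom to pick, among all nearest points of $x$ in $B$, a good $b$. One can also move toward a weighted point, or slightly perturb $x$ in directions within $X$. The fallback is to choose the direction $b-x$ where $b$ is a nearest point maximizing $\langle b-a^*,b-x\rangle$, and to use $\|b-x\|=\|a^*-x\|=r$ together with $b\ne a^*$. Then $\langle b-a^*,b-x\rangle=r^2-\langle a^*-x,b-x\rangle=\tfrac12\|b-a^*\|^2>0$, since the two vectors $a^*-x$ and $b-x$ have equal norm $r$ and are distinct. This is exactly the needed strict positivity, so the contradiction closes the argument.
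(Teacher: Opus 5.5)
Your argument is correct and follows the paper's route. Like the paper, you push $x$ along the segment toward a nearest point $b\in B$ and show $d(x_t,A)>d(x_t,B)$. The paper certifies strictness through the equality case of the triangle inequality (collinearity); you instead use the identity $\langle b-a',b-x\rangle=\tfrac12\|b-a'\|^2$, which holds whenever $\|a'-x\|=\|b-x\|=r$. You also make explicit the compactness of $A$ that the paper uses tacitly to conclude $d(x',A)>r-\varepsilon$.

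This identity already applies to your originally fixed $b$ and to each $a_n$, so it yields the contradiction at every single $t\in(0,1]$. The limit point $a^*$ is therefore unnecessary. So is the ``maximizing $b$'' fallback, which would in fact be circular, since $a^*$ was produced from that $b$.
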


\begin{proof}
For any $(A,B)\in \mathcal{T}_4(X)$, let
\[
E^0(A,B)=\{x\in X \mid d(x,A) > d(x,B)\}, \quad
E(A,B)=\{x\in X \mid d(x,A) \ge d(x,B)\}.
\]
We aim to prove that $\operatorname{cl}E^0(A,B)=E(A,B).$

It is easily seen that $E(A,B)$ is closed,
which immediately implies
$\operatorname{cl}E^0(A,B)\subseteq E(A,B).$
Therefore, it remains only to establish the reverse inclusion
$\operatorname{cl}E^0(A,B)\supseteq E(A,B).$

Let
\[F(A,B)=\{x\in X \mid d(x,A)=d(x,B)\}.\]
Note that 	$E(A,B)=E^0(A,B)\cup F(A,B)$,
it suffices to prove that $F(A,B)\subseteq\operatorname{cl}E^0(A,B)$.

    For any $x\in F(A,B)$, let $r=d(x,A)=d(x,B)>0$, then there exists  $b \in  B$ such  that $\|x-b\|=r$.
		 For any 	$\varepsilon \in (0,r)$, let
	\[
	x' = \tfrac{r-\varepsilon}{r}x + \tfrac{\varepsilon}{r}b,
	\]
	then $\|x' - x\|=\dfrac{\varepsilon}{r} \|x - b\|=\varepsilon.$
In what follows, we shall prove that $x'\in E^0(A,B).$
First,
		\[  \|x' - b\|= \left\| \tfrac{r-\varepsilon}{r}x + \tfrac{\varepsilon}{r}b - b \right\|
		= \tfrac{r-\varepsilon}{r} \|x - b\| = r - \varepsilon.
		\]
Second, for any $a \in A$,
	\[
	\|x' - a\| \geq \|x - a\| - \|x' - x\| \geq r - \varepsilon.
	\]
	If $\|x' - a\| =  r - \varepsilon,$ then $\|x- a\|=r$ and $\|x' - a\| = \|x - a\| - \|x' - x\|$. Thus $a, x', x, b$ are collinear and $x'$ lies between $x$ and $a$,
	which contradicts the fact that $x'$ lies between $x$ and $b$, $\|x'-a\|=\|x'-b\|$ and $a\not=b$.
		 Hence  $\|x' - a\|>r - \varepsilon$. It follows that $	d(x',A)>r - \varepsilon$.
		Therefore,	$	d(x',A) > d(x',B).	$ We are done.
\end{proof}

\begin{ex}\label{cube} For each $n\in\N\cup\{\infty\}$, $[0,1]^n$ is  $CT_4$.\end{ex}
\begin{proof} For $n\in\N$, $[0,1]^n$ is a compact convex set in the Hilbert space $\R^n$. It follows from Theorems \ref{compact metric cp} and \ref{compact convex cp} that $[0,1]^n$ is  $CT_4$. For $n=\infty$,
$ \prod_{k=1}^\infty [0,\tfrac{1}{k}]$
is a compact convex set in the separable Hilbert space $\ell_2$ and is homeomorphic to $[0,1]^\infty$.
Hence. $[0,1]^\infty$ is  $CT_4$.
\end{proof}

\begin{ex} For each $n\in\N$, the n-dimensional unit sphere $\sphere^n$ is  $CT_4$.
\end{ex}
\begin{proof}
We embed $\sphere^n$ into $\R^{n+1}$  endowed with the $(n+1)$-dimensional Euclidean norm. Now, we verify that $\sphere^n$ has the CP. Indeed, for each pair $(A,B)\in\mathcal{T}_4(\sphere^n)$, and for each $x\in \sphere^n$ with $\|x-A\|=\|x-B\|=r>0$. Assume $\|x-B\|=\|x-b\|$ for some $b\in B$. Without loss of generality, we assume that $x=(0,0,\cdots,0,1)$ and $b=(r\sqrt{1-\tfrac{r^2}{4}},0,0,\cdots,0,1-\tfrac{r^2}{2})$.
For each $\varepsilon\in (0,r)$, choose $y=(\varepsilon\sqrt{1-\tfrac{\varepsilon^2}{4}},0,0,\cdots,0,1-\tfrac{\varepsilon^2}{2})\in\sphere$.
Then $\|x-y\|=\varepsilon$. Moreover, we prove that $\|y-a\|>\|y-b\|$ for every $a\in A$, which shows that $x\in\cl \{z\in \sphere\mid \|z-A\|>\|z-B\|\}$. Indeed,
\begin{equation}\label{equ-10}
\begin{array}{ll}
& \|y-b\|^2 \\
=&\left(r\sqrt{1-\tfrac{r^2}{4}}-\varepsilon\sqrt{1-\tfrac{\varepsilon^2}{4}}\right)^2+\left(\tfrac{r^2}{2}-\tfrac{\varepsilon^2}{2}\right)^2\\
=&r^2+\varepsilon^2-2r\varepsilon\sqrt{1-\tfrac{r^2}{4}}\sqrt{1-\tfrac{\varepsilon^2}{4}}-\tfrac{1}{2}r^2\varepsilon^2.\\
\end{array}
\end{equation}
For $a=(a_1,a_2,\cdots,a_{n+1})\in A$, we have that $a_{n+1}\leq 1-\tfrac{r^2}{2}$ and
\[  \begin{array}{ll}
& \|y-a\|^2 \\
=&\left(a_1-\varepsilon\sqrt{1-\tfrac{\varepsilon^2}{4}}\right)^2+a_2^2+\cdots+a_n^2+\left(a_{n+1}-1+\tfrac{\varepsilon^2}{2}\right)^2\\
=&2-2a_1\varepsilon\sqrt{1-\tfrac{\varepsilon^2}{4}}-2a_{n+1}(1-\tfrac{1}{2}\varepsilon^2)\\
=&2-2a_1\varepsilon\sqrt{1-\tfrac{\varepsilon^2}{4}}-2(1-\tfrac{r^2}{2})(1-\tfrac{1}{2}\varepsilon^2)+2(1-\tfrac{r^2}{2}-a_{n+1})(1-\tfrac{1}{2}\varepsilon^2)\\
=&r^2+\varepsilon^2-\tfrac{1}{2}r^2\varepsilon^2 -2a_1\varepsilon\sqrt{1-\tfrac{\varepsilon^2}{4}}+2(1-\tfrac{r^2}{2}-a_{n+1})(1-\tfrac{1}{2}\varepsilon^2).\\
\end{array}
\]
Using Equation (\ref{equ-10}), we have
\begin{equation}\label{equ-20}
\begin{array}{ll}
& \|y-a\|^2 \\
=&\|y-b\|^2 +2r\varepsilon\sqrt{1-\tfrac{r^2}{4}}\sqrt{1-\tfrac{\varepsilon^2}{4}}-2a_1\varepsilon\sqrt{1-\tfrac{\varepsilon^2}{4}}+
2(1-\tfrac{r^2}{2}-a_{n+1})(1-\tfrac{1}{2}\varepsilon^2)\\
=&\|y-b\|^2+2\varepsilon\sqrt{1-\tfrac{\varepsilon^2}{4}}\left(r\sqrt{1-\tfrac{r^2}{4}}-a_1\right)+2(1-\tfrac{r^2}{2}-a_{n+1})(1-\tfrac{1}{2}\varepsilon^2),\\
\end{array}
\end{equation}
where
\[   a_1^2+a_{n+1}^2\leq 1,~ a_{n+1}\leq 1-\tfrac{r^2}{2},~(a_1,a_{n+1})\not =(r\sqrt{1-\tfrac{r^2}{4}},1-\tfrac{r^2}{2}).\]
From Equation~\eqref{equ-20}, a direct calculation shows that $\|y-a\|>\|y-b\|$ for $\varepsilon\in \bigl(0,\min\{r,\tfrac12\}\bigr)$.
\end{proof}

\begin{ex}\label{Cantor-space} The  Cantor space $C$ is  $CT_4$.
\end{ex}

\begin{proof}
Let $C=2^\N$ be the  Cantor space with the metric defined by
\[ d(x,y)=\sum_{n=1}^\infty\frac{|x(n)-y(n)|}{3^n}.\]
 Then for every triple $\{a,x,y\}\subset C $, we have that $d(a,x)\not = d(a,y)$ unless $x=y$. Indeed, let $n(x,y)=\min\{i\mid x(i)\not= y(i)\}$.
 Without loss of generality, suppose that $x(n(x,y))\not=a(n(x,y) )$ and $y(n(x,y))=a(n(x,y) )$. Then
  \begin{gather*}
 d(a,x)-d(a,y)=\frac{1}{3^{n(x,y)}} + \sum_{k=n(x,y)+1}^\infty \frac{|x(k)-a(k)| - |y(k)-a(k)|}{3^k}\\
 \geq \frac{1}{3^{n(x,y)}}-\sum_{k=n(x,y)+1}^\infty\frac{1}{3^k}>0.
 \end{gather*}
  It follows that $(C,d)$ has the CP. Using Theorem \ref{compact metric cp}, we have that $ C$ is $CT_4$.
\end{proof}

\begin{thm}\label{0-dim-CT4} Every compact 0-dimensional metric space is $CT_4$.
\end{thm}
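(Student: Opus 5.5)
We would mimic Example \ref{Cantor-space}: embed the space into the Cantor space $C$ and restrict the ultrametric-type metric there, then apply Theorem \ref{compact metric cp}. The point is that the CP only needs the following property of the metric $d$ on $X$: for every $a\in X$ and all $x\neq y$ in $X$, $d(a,x)\neq d(a,y)$. Indeed, given this, for $(A,B)\in\mathcal{T}_4(X)$ the set $F(A,B)=\{x\mid d(x,A)=d(x,B)\}$ is empty. If $d(x,A)=d(x,B)=r$, compactness gives $a\in A$, $b\in B$ with $d(x,a)=d(x,b)=r$ and $a\neq b$, contradicting the property applied with base point $x$. Hence $\{x\mid d(x,A)\ge d(x,B)\}=\{x\mid d(x,A)>d(x,B)\}$, and this set is closed. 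So the closure identity in the definition of CP holds trivially.

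The key steps, in order, are as follows.
\begin{itemize}
\item[(1)] Every compact $0$-dimensional metrizable space $X$ embeds into $C=2^\N$. This is standard: $X$ has a countable base of clopen sets $\{U_n\}$, and $x\mapsto(\chi_{U_n}(x))_n$ is a continuous injection of a compact space into $C$, hence an embedding.
\item[(2)] Transport the metric $d(x,y)=\sum_n|x(n)-y(n)|/3^n$ of Example \ref{Cantor-space} to $X$. It induces the topology of $X$, since any metric on $C$ compatible with its topology restricts to a compatible metric on a subspace.
\item[(3)] The inequality $d(a,x)\neq d(a,y)$ for distinct $x,y$ was verified in Example \ref{Cantor-space} for all triples in $C$, so it holds in particular for triples in $X$.
\item[(4)] By the remark above, $(X,d)$ has the CP. Theorem \ref{compact metric cp} then gives that $X$ is $CT_4$.
\end{itemize}

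We should also handle the degenerate cases. A finite $X$ is already $CT_4$, and the argument covers it anyway since finite metric spaces are $0$-dimensional and embed in $C$. In the notion of $0$-dimensionality used here (small inductive dimension $0$, i.e.\ a base of clopen sets), a compact metrizable space has a countable clopen base. We obtain it by taking, for each $n$, a finite cover by clopen sets of diameter less than $1/n$, which is possible by compactness.

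The main obstacle is step (1), i.e.\ making sure the clopen sets separate points and form a base. Separation is immediate from the $T_1$ assumption together with a clopen base. Continuity holds because each coordinate function is the characteristic function of a clopen set. No other difficulty is expected, since the CP is inherited verbatim by subspaces once the strict-inequality property holds.
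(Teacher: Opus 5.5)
Your proposal is correct and follows the paper's proof: embed $X$ into the Cantor space $C$, restrict the metric $d(x,y)=\sum_n |x(n)-y(n)|/3^n$ from Example~\ref{Cantor-space}, and apply Theorem~\ref{compact metric cp}. The only difference is that you spell out steps the paper leaves implicit, namely the construction of the embedding and the fact that the distinct-distances property passes to subspaces and forces $\{x\mid d(x,A)=d(x,B)\}=\emptyset$, so the CP holds trivially.
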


\begin{proof} It is well-known that every compact 0-dimensional metric space $X$ is a subspace of the  Cantor space $C$.
And the  inherited metric from $(C,d)$ also gives (X,d) the CP. Thus, $X$ is $CT_4$.
\end{proof}


\section{Examples of  $CT_3$-spaces which are not  $CT_4$}

Let us recall that a $T_2$-space is called {\bf countably compact} if each countable open cover has a finite subcover. A
$T_2$-space $X$ is countably compact if and only if $X$ does not contain any infinite discrete closed set.  We have the following  theorem:
\begin{thm} \label{non-T4}
Every $CT_4$-space is countably compact.
\end{thm}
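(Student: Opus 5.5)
The plan is to argue by contradiction. Assume $X$ is $CT_4$ with $CT_4$-map $\phi$ but not countably compact. Since every $CT_4$-space is $T_4$, hence Hausdorff, the characterization recalled above gives an infinite closed discrete set $D=\{d_n\mid n\in\N\}\subseteq X$. Every subset of $D$ is then closed in $X$. I will repeatedly use the following description of Vietoris convergence of subsets of $D$. An increasing sequence of finite sets $F_n$ with $\bigcup_n F_n=S$ converges to $S$, because the lower conditions $U^-$ are eventually satisfied and the upper conditions $U^+$ with $U\supseteq S$ always are. By contrast, a sequence of subsets of $D$ that keeps acquiring points \emph{outside} a fixed closed set $S$ cannot converge to $S$, since $X\setminus\bigl(D\setminus S\bigr)$ is an open set containing $S$. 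So inside $\mathcal{T}_4(X)$, the only available convergent moves for subsets of $D$ are: add finitely many points in a way that exhausts the limit, or delete points "far out."

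Concretely, I will split $D$ into the even part $E=\{d_{2k}\}$ and the odd part $O=\{d_{2k+1}\}$, and put $A=E$, $B=O$, so $(E,O)\in\mathcal{T}_4(X)$. By $T_4$, choose disjoint open sets $U\supseteq E$ and $V\supseteq O$. The intended contradiction comes from comparing $\phi$ at two families of pairs: pairs $(A_n,B)$ with $A_n\subseteq E$ finite increasing to $E$, and pairs $(A,B_n)$ with $B_n\subseteq O$ finite increasing to $O$. Both families converge to $(E,O)$, so by continuity $\phi_j(A_n,B)\to\phi_j(E,O)$ and $\phi_j(A,B_n)\to\phi_j(E,O)$ for $j=1,2$. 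For each $n$, the separation conditions fix which side certain points lie on: $\phi_2(A_n,O)\supseteq A_n$ and $\phi_1(A_n,O)\supseteq O$. Meanwhile the points $d_{2k}$ with $2k$ large do not belong to $A_n$, so they are unconstrained. The goal is to produce, by a diagonal choice of indices $n_1<n_2<\cdots$, a sequence of pairs $(P_m,Q_m)\in\mathcal{T}_4(X)$ that converges to a pair in $\mathcal{T}_4(X)$ while the points $d_{n_m}$ switch sides infinitely often. Continuity at the limit would then force $\phi_1$ or $\phi_2$ of the limit to contain a point it must avoid, since the Vietoris upper neighbourhoods $W^+$ with $W=X\setminus\{d_{n_m}\mid m\}\cup(\dots)$ are open.

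I expect the main obstacle to be exactly this step: realizing the "side switching" of far-out points $d_n$ by pairs that still \emph{converge} in the Vietoris topology. By the observation in the first paragraph, a point cannot simply be moved from $A$ to $B$ along a convergent sequence, because any set containing $d_n\in E$ fails to lie in $V^+$. My first attempt will avoid moving points between $E$ and $O$. Instead, I will use only deletions and finite additions, and read off the contradiction from the covering condition $\phi_1\cup\phi_2=X$ at the far-out points $d_n$ that belong to neither set. Continuity at $(E,O)$ with respect to the upper sets $\bigl(X\setminus\{d_n\}\bigr)^+$ should then force $d_n$ into a prescribed $\phi_j$ for all but finitely many $n$, and I will try to show this is incompatible with the constraints coming from the other family. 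If this fails, my fallback is to let the limit pair include a non-isolated auxiliary point, obtained from a closed set meeting $U\setminus E$. Finally, applying this to $X=\bigoplus_{s\in S}X_s$ with $S$ infinite yields the use made of this theorem in Theorem~\ref{sum}.
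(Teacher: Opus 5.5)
Your proposal does not yet contain a proof. The step you single out as the ``main obstacle'' is the entire argument, and the concrete route you commit to cannot work.

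Your first attempt evaluates $\phi$ only at pairs $(A,B)$ with $A\subseteq E$ and $B\subseteq O$: finite additions and deletions, never moving points between $E$ and $O$, with everything converging to the single pair $(E,O)$. No contradiction can come from such pairs. Take your disjoint open sets $U\supseteq E$, $V\supseteq O$. The constant assignment $(A,B)\mapsto(X\setminus U,\,X\setminus V)$ meets every $CT_4$ requirement on all these pairs and is trivially continuous there. Concretely, continuity at $(E,O)$ through the open set $(X\setminus E)^+$ only yields $E\subseteq\phi_2(A_n,O)$ for large $n$, which is perfectly consistent. So the far-out points $d_{2k}$ are not ``unconstrained''; they are simply placed on the $\phi_2$ side, and the covering condition there produces nothing.

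The side-switching sequence $(P_m,Q_m)$ is never actually constructed. Your fallback, a non-isolated auxiliary point, is unavailable when $X$ is an infinite discrete space, which is exactly the basic case the theorem must cover.

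The missing idea is to vary the limit pair instead of fixing one partition. Take any nonempty proper $S\subset D$. The finite pairs $(E',F')$ with $E'\subseteq S$ and $F'\subseteq D\setminus S$, directed by inclusion, converge to $(S,D\setminus S)$ in $\mathcal{T}_4(X)$; this is your own first observation. Since $S$ and $D\setminus S$ are closed, $(X\setminus S)^+\times(X\setminus(D\setminus S))^+$ is an open neighbourhood of $\phi(S,D\setminus S)$. Continuity therefore gives a single finite pair $(E',F')$ with $\phi_1(E',F')\cap S=\emptyset$ and $\phi_2(E',F')\cap(D\setminus S)=\emptyset$. Since $\phi_1\cup\phi_2=X$, this forces $\phi_2(E',F')\cap D=S$.

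So every $S$ is recovered from a pair of finite subsets of $D$. There are $2^{|D|}$ choices of $S$ but only $|D|$ such pairs, a contradiction. This is the paper's argument.

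If you want to keep your ``diagonal'' flavour with $D$ countable, you can. Diagonalize against an enumeration $(E'_k,F'_k)$ of the countably many finite pairs to choose a nonempty proper $S$ with $S\neq\phi_2(E'_k,F'_k)\cap D$ for every $k$. Then approximate $(S,D\setminus S)$ by its finite initial segments. The contradiction lives at this specially chosen partition, not at $(E,O)$.
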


\begin{proof} Let $X$ be a $CT_4$-space and $\phi:\mathcal{T}_4(X)\to\Cld(X)^2$ be a  $CT_4$-map for $X$.
If $X$ is not countably compact, then there exists an infinite discrete closed set $D$ in $X$.
Define \[  \mathcal{F}=\{F\subset D\mid F\mbox{ is a non-empty finite set}\}.\]
For each nonempty $S\subset D$, let
\[  \mathcal{F}_S=\{(E,F)\in\mathcal{F}^2\mid E\subseteq S, F\subseteq D\setminus S\}.\]
Then, with the pointwise inclusion relation, $\mathcal{F}_{S}$ is a directed set and hence, $(E,F)\mapsto (E,F)$
is a net in $\mathcal{T}_4(X)$ with the limit $(S,D\sm S)$.
Since $\phi$ is continuous, $\phi(\mathcal{F}_S)\to \phi(S, D\setminus S)$.
Note that $\phi_1(S, D\setminus S)\subseteq X\setminus S  $ and $\phi_2(S, D\setminus S)\subseteq X\setminus (D\setminus S).  $
Since $S$ and $D\setminus S$ are closed sets, $(X\setminus S)^+$ and $(X\setminus (D\setminus S))^+$ are open sets in $\Cld(X)$,
there exists $(E,F)\in \mathcal{F}_S\subseteq \mathcal{F}^2$ such that
$\phi_1(E, F)\subseteq X\setminus S  $ and $\phi_2(E, F)\subseteq X\setminus (D\setminus S).$
Since $\phi_1(E, F)\cup \phi_2(E, F)=X$, we have that $\phi_1(E, F)\cap D=D\setminus S$ and $\phi_2(E, F)\cap D= S$.
Thus, $(E,F)\mapsto \phi_2(E,F)\cap D$ defines a surjective map from $\mathcal{F}^2$ onto the family of all non-empty proper subsets of $D$.
But, it is impossible since $D$ is infinite.
\end{proof}

It is well known that countable compactness coincides with compactness in metric spaces, so we have the following corollary.

\begin{cor}\label{nocomact-no-CT_4} No noncompact metric space is $CT_4$.
\end{cor}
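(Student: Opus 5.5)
The plan is to derive Corollary~\ref{nocomact-no-CT_4} directly from Theorem~\ref{non-T4}, using the classical fact that compactness and countable compactness coincide for metrizable spaces. Suppose, for contradiction, that $(X,d)$ is a noncompact metric space which is $CT_4$. By Theorem~\ref{non-T4}, $X$ is then countably compact.

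Next I will show that a countably compact metric space is compact. I would argue through sequential compactness. First, every sequence in $X$ has a cluster point: otherwise its set of values is an infinite closed discrete set, contradicting the characterization of countable compactness recalled before Theorem~\ref{non-T4}. In a metric space a cluster point of a sequence is the limit of a subsequence, so $X$ is sequentially compact. A sequentially compact metric space is totally bounded, since otherwise some $\varepsilon>0$ yields an infinite $\varepsilon$-separated sequence with no convergent subsequence. It is also complete. Hence $X$ is compact. Alternatively, I can cite Engelking for the equivalence of compactness and countable compactness in metrizable spaces. Either way this contradicts the assumption that $X$ is noncompact.

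The only point needing care is making sure Theorem~\ref{non-T4} applies as stated. Its proof uses only that a $CT_4$-space that is not countably compact contains an infinite closed discrete set $D$. For metric spaces this is exactly what fails in the noncompact case: noncompactness gives a sequence with no convergent subsequence, and its range is an infinite closed discrete set. So the contrapositive applies without any extra hypotheses. I do not expect a real obstacle here, since the corollary is a direct consequence of the theorem.
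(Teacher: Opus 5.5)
Your proposal is correct and follows the paper's own argument: the paper obtains the corollary directly from Theorem~\ref{non-T4} together with the well-known fact that countable compactness and compactness coincide for metric spaces. Your sequential-compactness argument simply proves that standard fact instead of citing it.
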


Using Theorems \ref{sum} and  \ref{non-T4}, we immediately obtain the following examples:

\begin{ex} \label{discrete-space}
	All discrete spaces are  $CT_3$. Furthermore, a discrete space is  $CT_4$ if and only if it is finite.
	\end{ex}


We shall give more interesting $CT_3$-spaces which are not $CT_4$.

\begin{lem}\label{continty-map1} For each metric space $(X,d)$,  the map $\varphi: X\times \Cld(X)\to [0,+\infty)$ defined by
	$(x,A)\mapsto d(x,A)$ is continuous.
\end{lem}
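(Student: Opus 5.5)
To prove Lemma \ref{continty-map1}, we fix a point $(x_0,A_0)\in X\times\Cld(X)$ and $\varepsilon>0$. We will exhibit an open neighbourhood of the form $B(x_0,\varepsilon/2)\times\mathcal{U}$, with $\mathcal{U}$ a Vietoris open set containing $A_0$, on which $|d(x,A)-d(x_0,A_0)|<\varepsilon$. The first reduction is the standard $1$-Lipschitz estimate $|d(x,A)-d(x_0,A)|\le d(x,x_0)$, valid for every nonempty $A$. With it, it suffices to control $|d(x_0,A)-d(x_0,A_0)|<\varepsilon/2$ for $A$ in a Vietoris neighbourhood of $A_0$. Note that we cannot use the Hausdorff metric here, since $X$ need not be compact, so we work directly with the subbasic sets $U^-$ and $U^+$.

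Write $r=d(x_0,A_0)$. We handle the two one-sided bounds separately, which reflects lower and upper semicontinuity of $A\mapsto d(x_0,A)$.

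\emph{Upper bound.} Pick $a_0\in A_0$ with $d(x_0,a_0)<r+\varepsilon/4$ and put $U=B(a_0,\varepsilon/4)$. Then $A_0\in U^-$, and every $A\in U^-$ contains a point within $\varepsilon/4$ of $a_0$. Hence $d(x_0,A)<r+\varepsilon/2$ for all $A\in U^-$.

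\emph{Lower bound.} This is only needed when $r>0$. Put $V=\{y\in X\mid d(x_0,y)>r-\varepsilon/2\}$, which is open because $d(x_0,\cdot)$ is continuous. Every point of $A_0$ lies at distance at least $r$ from $x_0$, so $A_0\subseteq V$, that is, $A_0\in V^+$. Every $A\in V^+$ satisfies $d(x_0,A)\ge r-\varepsilon/2$. When $r=0$ the lower bound is trivial and we take $V=X$.

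Finally, set $\mathcal{U}=U^-\cap V^+$. For $(x,A)\in B(x_0,\varepsilon/2)\times\mathcal{U}$, combining the Lipschitz estimate with the two one-sided bounds gives $|d(x,A)-d(x_0,A_0)|<\varepsilon$, which proves continuity.

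The only delicate point is the lower bound: it requires the upper Vietoris set $V^+$, and one must check that $V$ contains all of $A_0$. This holds by the definition of the infimum $r$. Everything else is routine.
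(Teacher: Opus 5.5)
Your proof is correct and follows the same scheme as the paper's. The neighbourhood has the form $B(x_0,\delta)\times\bigl(U^-\cap W^+\bigr)$, where $U$ is a small ball around a near-nearest point $a_0\in A_0$ (giving the upper bound) and $W^+$ is an upper Vietoris set (giving the lower bound). The only differences are cosmetic: you first split off the $x$-variable with the $1$-Lipschitz estimate and take $W=\{y\in X\mid d(x_0,y)>r-\varepsilon/2\}$, whereas the paper estimates jointly in $(x,A)$ and takes $W=B(A_0,\varepsilon/3)$.
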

\begin{proof}
Recall that  $d(x,A)=\inf_{a\in A}d(x,a).$
For $(x,A)\in X\times \Cld(X)$ and $\varepsilon>0$, choose $a\in A$ such that
\[ d(x,A)\leq d(x,a)<d(x,A)+\tfrac{\varepsilon}{3}.\]
Then
\[ W=B(x,\tfrac{\varepsilon}{3})\times \left((B(A,\tfrac{\varepsilon}{3}))^+\cap (B(a,\tfrac{\varepsilon}{3}))^-\right)\]
is a neighborhood of $(x,A)$, and for each $(y,B)\in W$, choose $b\in B\cap B(a,\tfrac{\varepsilon}{3})$.
Then, on the one hand,
\[ d(y,B)\leq d(y,b)\leq d(y,x)+d(x,a)+d(a,b)< \tfrac{\varepsilon}{3}+d(x,A)+\tfrac{\varepsilon}{3}+\tfrac{\varepsilon}{3}=d(x,A)+\varepsilon.\]
And on the other hand, for each $b'\in B$, choose $a'\in A$ such that $d(a',b')<\tfrac{\varepsilon}{3}$, then
\[ d(y,b')\geq d(x,a')-d(x,y)-d(b',a')>d(x,A)-\tfrac{\varepsilon}{3}-\tfrac{\varepsilon}{3}=d(x,A)-\tfrac{2\varepsilon}{3},\]
and hence,
\[ d(y,B)=\inf_{b'\in B} d(y,b')\geq d(x,A)-\tfrac{2\varepsilon}{3}>d(x,A)-\varepsilon. \]
That is,
\[ |d(y,B)-d(x,A)|<\varepsilon.\]
\end{proof}

\begin{thm}\label{gener-T3} Let $(X,d)$ be a metric space satisfying the following condition: for every $x\in X$ and $r>0$, we have that
\begin{enumerate}
\renewcommand{\labelenumi}{(\arabic{enumi})}
\item[(i)] $\cl \{y\in X\mid d(y,x)>r\}=\{y\in X\mid d(y,x)\ge r\}$;
\item[(ii)]  $\cl \{y\in X\mid d(y,x)<r\}=\{y\in X\mid d(y,x)\le r\}$;
\item[(iii)] $\{y\in X\mid d(y,x)\le r\}$ is compact (hence, $X$ is locally compact);
\item[(iv)] for each $\varepsilon>0$ and $y\in X$ with $r-\varepsilon<d(y,x)<r$, there exists $y_0\in X$ with $d(x,y_0)=r$
and $d(y,y_0)<\varepsilon$.
\end{enumerate}
Then $(X,d)$ is $CT_3$.
\end{thm}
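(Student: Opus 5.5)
Given $(x,B)\in\mathcal{T}_3(X)$, put $r(x,B)=d(x,B)>0$; by Lemma \ref{continty-map1} this depends continuously on $(x,B)$. We want a closed set containing $B$ and avoiding $x$, and a closed set containing $x$ and avoiding $B$, whose union is $X$. Splitting $X$ by a metric sphere around $x$ is the natural choice. Set $s=r(x,B)/2$ and define
\[
\phi_1(x,B)=\{y\in X\mid d(y,x)\ge s\},\qquad \phi_2(x,B)=\{y\in X\mid d(y,x)\le s\}.
\]
Both sets are closed and their union is $X$. Moreover $x\notin\phi_1$ because $s>0$, and $B\cap\phi_2=\emptyset$ because every $b\in B$ has $d(b,x)\ge 2s>s$. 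Each set is non-empty: $\phi_2$ contains $x$, and $\phi_1$ contains $B$. So the only thing left to prove is continuity of the two maps, and this reduces to the continuity of $(x,s)\mapsto \{y\mid d(y,x)\ge s\}$ and $(x,s)\mapsto\{y\mid d(y,x)\le s\}$ on $X\times(0,\infty)$ with the Vietoris topology.

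For the lower half of the Vietoris topology, fix an open $U$ meeting the closed ball $D(x,s)=\{y\mid d(y,x)\le s\}$. By (ii) it then meets the open ball $\{y\mid d(y,x)<s\}$, say at $y$. Since the condition $d(y,x')<s'$ is open in $(x',s')$, the set $U$ meets $D(x',s')$ for all $(x',s')$ near $(x,s)$. The same argument, using (i), handles $U^-$ for the outer set $\{y\mid d(y,x)\ge s\}$.

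For the upper half, suppose $D(x,s)\subseteq U$ with $U$ open. Since $D(x,s)$ is compact by (iii), I expect to find $\delta>0$ with $D(x,s+2\delta)\subseteq U$. If no such $\delta$ exists, there are points $y_n\notin U$ with $d(y_n,x)\le s+1/n$. Because these lie in the compact ball $D(x,s+1)$, they have a convergent subsequence, and its limit lies in $D(x,s)\setminus U$, which is impossible. Then for $d(x',x)<\delta$ and $|s'-s|<\delta$ we get $D(x',s')\subseteq D(x,s+2\delta)\subseteq U$.

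The main obstacle is the upper half for the outer sets $\{y\mid d(y,x)\ge s\}$, because these sets are non-compact. Suppose $\{y\mid d(y,x)\ge s\}\subseteq U$. We need $\{y\mid d(y,x')\ge s'\}\subseteq U$ for $(x',s')$ nearby, and the only points that can cause trouble lie in the compact annulus $K=\{y\mid s-2\delta\le d(y,x)\le s\}$. Here I would use (iv) together with compactness. If the claim fails, take $y_n\notin U$ with $s-1/n<d(y_n,x)<s$. By (iv) there are $z_n$ with $d(z_n,x)=s$ and $d(y_n,z_n)\to0$. Since $z_n\in U$ lies on the compact sphere $\{z\mid d(z,x)=s\}\subseteq U$, pass to a subsequence with $z_n\to z\in U$. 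Then $y_n\to z$ as well, so $y_n\in U$ eventually, a contradiction. Hence some $\delta$ gives $\{y\mid d(y,x)>s-2\delta\}\subseteq U$, and continuity follows as before. (The degenerate case where $y_n$ satisfies $d(y_n,x)=s$ exactly is already covered by the inclusion.)

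Finally, compose these continuous maps with the continuous map $(x,B)\mapsto(x,d(x,B)/2)$. This shows that $\phi$ is a $CT_3$-map.
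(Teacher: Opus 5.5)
Your proposal is correct and follows essentially the same route as the paper. You use the same maps $\phi_1(x,B)=\{y\mid d(y,x)\ge \tfrac12 d(x,B)\}$ and $\phi_2(x,B)=\{y\mid d(y,x)\le \tfrac12 d(x,B)\}$, and the same reduction to continuity of $(x,s)\mapsto\{y\mid d(y,x)\ge s\}$ and $(x,s)\mapsto\{y\mid d(y,x)\le s\}$ composed with $(x,B)\mapsto(x,\tfrac12 d(x,B))$, with (i)/(ii) for the $U^-$ parts and (iii)/(iv) for the $U^+$ parts. The differences are cosmetic: for the outer sets the paper takes a uniform neighbourhood $B(F,2\delta)\subseteq U$ of the compact sphere $F$ directly, where you reach the same $\delta$ by a sequential contradiction argument (and similarly for the closed balls).
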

We first verify the following lemma:

\begin{lem}\label{continty-map2} If $(X,d)$ satisfies the requirements in Theorem \ref{gener-T3}, then
$\psi_1,\psi_2:X\times (0, +\infty)\to \Cld(X)$  defined by
		\[ \psi_1(x,r)=\{y\in X\mid d(x,y)\geq r\},~~\psi_2(x,r)=\{y\in X\mid d(x,y)\leq r\}\]
		are continuous.
\end{lem}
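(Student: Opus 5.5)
Fix $(x,r)\in X\times(0,+\infty)$ and a basic Vietoris neighborhood $\langle U_1,\dots,U_n\rangle$ of $\psi_j(x,r)$. It suffices to check the two kinds of subbasic conditions separately: the lower condition $\psi_j(x',r')\cap U\neq\emptyset$ for an open $U$ meeting $\psi_j(x,r)$, and the upper condition $\psi_j(x',r')\subseteq U$ for an open $U\supseteq\psi_j(x,r)$. In both, $(x',r')$ is close to $(x,r)$. We use the triangle inequality $|d(x',y)-d(x,y)|\le d(x,x')$ throughout. First note that $\psi_1(x,r)$ could be empty (for instance, in a bounded space); the statement implicitly needs $\psi_1(x,r)\neq\emptyset$, and we work where both sets are non-empty, which holds when $\psi_1$ is used in the $CT_3$ construction.

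\textbf{Lower semicontinuity.} Suppose $\psi_1(x,r)\cap U\neq\emptyset$. By (i), $U$ contains a point $y$ with $d(x,y)>r$, say $d(x,y)=r+3\delta$. If $d(x,x')<\delta$ and $|r'-r|<\delta$, then $d(x',y)>r'$, so $y\in\psi_1(x',r')\cap U$. For $\psi_2$ the argument is symmetric: by (ii), $U$ contains $y$ with $d(x,y)<r$, and the same estimate applies.

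\textbf{Upper semicontinuity for $\psi_2$.} Let $U\supseteq\psi_2(x,r)$ be open. The compact set $K=\{y\mid d(x,y)\le r+1\}$ from (iii) is the key. We claim there is $\delta\in(0,1/2)$ with $\{y\mid d(x,y)\le r+2\delta\}\subseteq U$. Otherwise there are $y_k\in K\setminus U$ with $d(x,y_k)\to r^{+}$. By compactness a subsequence converges to some $y$ with $d(x,y)\le r$, so $y\in U$, contradicting that $K\setminus U$ is closed. Then for $d(x,x')<\delta$ and $|r'-r|<\delta$ we get $\psi_2(x',r')\subseteq\{y\mid d(x,y)\le r+2\delta\}\subseteq U$.

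\textbf{Upper semicontinuity for $\psi_1$ (the main obstacle).} Let $U\supseteq\psi_1(x,r)$. We need $\delta>0$ with $\{y\mid d(x,y)\ge r-2\delta\}\subseteq U$. Points far away are not controlled by compactness, but they satisfy $d(x,y)\ge r$ and so already lie in $U$. Only the shell $S_\delta=\{y\mid r-2\delta\le d(x,y)<r\}$ matters, and it lies in the compact ball $\{y\mid d(x,y)\le r\}$. Suppose no such $\delta$ exists. Then there are $y_k\notin U$ with $r-1/k\le d(x,y_k)<r$. By (iii) a subsequence converges to some $y$ with $d(x,y)=r$, so $y\in U$, which is a contradiction because $X\setminus U$ is closed.

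Condition (iv) gives an alternative route: each $y_k$ is within $\varepsilon$ of a point $y_k^0$ on the sphere $\{d(x,\cdot)=r\}$, so one can work on that compact sphere directly. Either argument yields $\delta$. Combining the two semicontinuities over the finitely many $U_i$ gives continuity of $\psi_1$ and $\psi_2$.
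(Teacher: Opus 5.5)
Your proof is correct. The lower halves (the $U^-$ conditions, via (i) and (ii)) and the upper half for $\psi_2$ are essentially the paper's argument. The paper phrases the $\psi_2$ case as a sequential contradiction with $(x_n,r_n)\to(x_0,r_0)$, whereas you first extract a uniform $\delta$ from compactness of the ball of radius $r+1$; the idea is the same.

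The genuine difference is the upper half for $\psi_1$. The paper uses (iii) only to make the sphere $F(x_0,r_0)=\{y\mid d(y,x_0)=r_0\}$ compact and chooses $\delta$ with $B(F(x_0,r_0),2\delta)\subseteq U$. It then needs (iv) to place every point of the shell $r_0-2\delta<d(y,x_0)<r_0$ within $2\delta$ of that sphere. You instead apply compactness to the closed ball $\{y\mid d(x,y)\le r\}$ itself. The compact shells $\{y\mid r-2\delta\le d(x,y)\le r\}$ decrease to the sphere, which lies in the open set $U$, so they are eventually contained in $U$.

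This never uses (iv), so your argument shows that (iv) can be dropped from this lemma and from Theorem \ref{gener-T3}. That is a real gain, because (iv) does not follow from (i)--(iii). Take $X=(\R\times\{0\})\cup(\{0\}\times[0,+\infty))\subseteq\R^2$ with the Euclidean metric. Then (i)--(iii) hold, but (iv) fails for $x=(-1,0)$, $r=\tfrac32$, $y=(0,1)$, $\varepsilon=\tfrac1{10}$. The point of $X$ at distance $\tfrac32$ from $x$ nearest to $y$ is $(0,\tfrac{\sqrt5}{2})$, and its distance from $y$ is $\tfrac{\sqrt5}{2}-1>\tfrac1{10}$. The paper's route only buys a neighbourhood described directly by a collar of the sphere, rather than one obtained by contradiction.

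Your caveat about $\psi_1(x,r)=\emptyset$ is harmless but almost vacuous. Suppose $|X|\ge 2$ and $X$ were bounded. Then $X$ would be a closed ball, hence compact by (iii), and $d(x,\cdot)$ would attain its supremum $R>0$, contradicting (i) at $r=R$. So $X$ is unbounded and $\psi_1$ never takes the value $\emptyset$. The only exception is the one-point space, where $\mathcal{T}_3(X)=\emptyset$ anyway.
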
		
\begin{proof}	We give a complete proof of the continuity of $\psi_1:X\times (0, +\infty)\to \Cld(X)$, and indicate only the modifications needed for $\psi_2$.
	
	Let $(x_0,r_0)\in X\times (0,+\infty)$.

For an open set  $U$  in $X$ with $\psi_1(x_0,r_0)\in U^-,$ by (i),
there exists $y_0\in U$ such that
$d(y_0,x_0)>r_0.$
 Then, there exists $\delta>0$ such that $d(y_0,x)>r$  for $x\in B(x_0,\delta)$ and $r\in (r_0-\delta,r_0+\delta)$. It follows that $U\cap \psi_1(x,r) \ni y_0$. Thus, $\psi_1(x,r)\in U^-$. Hence, $\psi_1^{-1}(U^-)$ is open in $X\times (0,+\infty)$.
		
	For an open set  $U$  in $X$ with $\psi_1(x_0,r_0)\in U^+,$  by (iii), $F(x_0,r_0)=\{y\in X\mid d(y,x_0)=r_0\}$ is compact and $F(x_0,r_0)\subseteq U$.
Thus, there exists $\delta >0$ such that $B(F(x_0,r_0),2\delta )\subseteq U$.
		Now consider neighborhoods $B(x_0,\delta )$ and $ B(r_0,\delta )$ of $x_0$ and $r_0$ in $X$ and $(0,+\infty)$, respectively. For each $x\in B(x_0,\delta )$	and $r\in B(r_0,\delta )$, we check that $\psi_1(x,r)\subseteq U$, which implies that $\psi_1(x,r)\in U^+$. For each $y\in \psi_1(x,r)$, we have that $d(y,x)\geq r$.
		Thus
	\begin{equation}\label{equ-110}
d(y, x_0)> d(y, x)-\delta \geq r-\delta >r_0-2\delta .
\end{equation}
	If $d(y,x_0)\geq r_0$, then $y\in \psi_1(x_0,r_0)\subseteq U$.
If $d(y,x_0)<r_0$, then, by Equation (\ref{equ-110}),
\begin{equation*}
 r_0-2\delta  < d(y, x_0)<r_0.
\end{equation*}
It follows from (iv) that there exists $y_0\in F(x_0,r_0)$ such that $d(y_0,y)<2\delta $.
Hence, $y\in B(F(x_0,r_0),2\delta )\subseteq U$. Therefore, $\psi_1^{-1}(U^+)$ is open.

This shows that $\psi_1:X\times (0,+\infty)\to\Cld(X)$ is continuous.

The continuity of $\psi_2:X\times (0,+\infty)\to\Cld(X)$ has a simpler proof. Indeed, to verify that  $\psi_2^{-1}(U^-)$ is open in $X\times (0,+\infty)$,
we only need to replace ``by (i)'' in the above proof with ``by (ii)'', and to replace ``$d(y_0,x)>r$'' with ``$d(y_0,x)<r$''.

To verify that  $\psi_2^{-1}(U^+)$ is open in $X\times (0,+\infty)$ for each open set $U$ in $X$, we show that every point $(x_0,r_0)$ in $\psi_2^{-1}(U^+)$ is in the interior  of $\psi_2^{-1}(U^+)$. Otherwise, there exist $(x_0,r_0)\in \psi_2^{-1}(U^+) $ and a sequence $(x_n,r_n)$ in $X\times (0,\infty)\setminus \psi_2^{-1}(U^+)$ such that $(x_n,r_n)\to (x_0,r_0)$. Hence, there exists $y_n\in \psi_2(x_n,r_n)\setminus U$ for each $n$. It follows that
\begin{equation}\label{equ-8-30}d(y_n,x_n)\leq r_n.\end{equation}
Since $(x_n,r_n)\to (x_0,r_0)$, we can assume that $d(x_n,x_0)<1$ and $r_n<r_0+1$ for each $n$. Then, $d(y_n,x_0)\leq r_0+2$  for each $ n.$
Using (iii), we can assume that $y_n\to y_0$ for some $y_0\in X$. Then, $y_0\not\in U$. However, it follows from \eqref{equ-8-30} that
$d(y_0,x_0)\leq r_0$. Hence, $y_0\in \psi_2(x_0,r_0)\subset U$. A contradiction occurs.
\end{proof}

\begin{proof}[The proof of Theorem \ref{gener-T3}] For
		$(x,B)\in\mathcal{T}_3(X)$, let
		\begin{gather*}\phi_1(x,B)=\{y\in X\mid d(y,x)\ge \tfrac{1}{2} d(x,B)\}=\cl\{y\in X\mid d(y,x)> \tfrac{1}{2} d(x,B)\},~~\mbox{and}\\
\phi_2(x,B)=\{y\in X\mid d(y,x)\le \tfrac{1}{2} d(x,B)\}=\cl\{y\in X\mid d(y,x)< \tfrac{1}{2} d(x,B)\}.\end{gather*}
Then $x\not\in \phi_1(x,B)$, $B\cap \phi_2(x,B)=\emptyset$ and $\phi_1(x,B)\cup \phi_2(x,B)= X.$ Moreover,
\[ \phi_1(x,B)=\psi_1(x,\tfrac{1}{2}d(x,B)),~~\phi_2(x,B)=\psi_2(x,\tfrac{1}{2}d(x,B)).\]
It follows from Lemmas \ref{continty-map1} and \ref{continty-map2} that $\phi_1,\phi_2:\mathcal{T}_3(X)\to \Cld(X)$
		are continuous.
Hence $X$ is $CT_3$.
	\end{proof}

\begin{ex}\label{Euclidean-space} Every Euclidean space $\R^n$ is $CT_3$ but not $CT_4$.
\end{ex}

\begin{proof}It is easy to verify the Euclidean metric on $\R^n$ satisfies the requirements in Theorem \ref{gener-T3}. Hence, $\R^n$ is $CT_3$.
By Corollary \ref{nocomact-no-CT_4},  $\R^n$ is not $CT_4$.
\end{proof}

\begin{rem} We do not know whether or not there exist other metric spaces satisfying the requirements in Theorem \ref{gener-T3}.\end{rem}

\begin{ex}\label{real-numbers} Every interval in $\R$ is  $CT_3$. And an interval in $\R$ is  $CT_4$ if and only if it is compact.
\end{ex}

\begin{proof}Obviously, every interval in $\R$ is homeomorphic to one of $\R$, $[0,1]$, $\R^+=[0, +\infty)$ or $\{0\}$.
Hence, by Corollary \ref{nocomact-no-CT_4} and Examples \ref{discrete-space}, \ref{cube} and \ref{Euclidean-space}, we only need to verify that $\R^+$ is $CT_3$.

Let \begin{gather*}
\mathcal{L}=\{(x,B)\in\mathcal{T}_3(\R^+)\mid x<b\mbox{ for each }b\in B\};\\
\mathcal{R}=\{(x,B)\in\mathcal{T}_3(\R^+)\mid x>b\mbox{ for each }b\in B\};\\
\mathcal{M}=\mathcal{T}_3(\R^+)\setminus (\mathcal{R}\cup\mathcal{L}).
\end{gather*}
Then, for each $(x,B)\in \mathcal{L}$, choose $a\in\R^+$ such that $a>x$ and $(a,+\infty)\supseteq B$. Then $[0,a)\times (a,+\infty)^+$
is a neighborhood of $(x,B)$ in $\mathcal{T}_3(\R^+)$ and  misses $\mathcal{R}\cup \mathcal{M}$. Hence, $\mathcal{L}$ is open in $\mathcal{T}_3(\R^+)$.
Similarly, $\mathcal{R}$ is open in $\mathcal{T}_3(\R^+)$. Moreover, for each $(x,B)\in \mathcal{M}$, choose $a,c\in\R^+$,  such that $a<x<c$ and $B\subseteq [0,a)\cup (c,+\infty)$.  Then $(a,c)\times \left([0,a)^-\cap (c,+\infty)^-\right)$ is a neighborhood of $(x,B)$ in $\mathcal{T}_3(\R^+)$ and misses $\mathcal{L}\cup \mathcal{R}$.
This shows that $\mathcal{M}$ is also open in $\mathcal{T}_3(\R^+)$. Thus, $\mathcal{L},\mathcal{R},\mathcal{M}
$ are clopen sets in $\mathcal{T}_3(\R^+)$.

It is easy to see that $(x,B)\mapsto \min B$ is continuous from $\mathcal{L}$ to $\R^+$. Thus, we can define a continuous map
$\phi:\mathcal{L}\to \Cld(\R^+)^2$ as follows
\[ \phi(x,B)=\left([\tfrac{x+\min B}{2},+\infty),[0,\tfrac{x+\min B}{2}]\right).\]
Then $x\not\in \phi_1(x,B)$, $B\cap \phi_2(x,B)=\emptyset$ and $ \phi_1(x,B)\cup\phi_2(x,B)=\R^+.$

Similarly, we can define a continuous map $\phi:\mathcal{R}\cup \mathcal{M}\to \Cld(\R^+)^2$ satisfying the above conditions.
Thus, $\R^+$ is $CT_3$.
\end{proof}


The following theorem gives a class of $CT_3$-spaces:

\begin{thm}\label{metric+one-noniso-be-CT3}
Let $(X,d)$ be a metric space with a unique non-isolated point $x_0$. Then $X$ is $CT_3$, and it is $CT_4$ if and only if it is compact.
\end{thm}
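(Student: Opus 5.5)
The equivalence "$CT_4$ iff compact" should follow from earlier results, so I would do it first. If $X$ is $CT_4$, then Theorem \ref{non-T4} makes $X$ countably compact, and Corollary \ref{nocomact-no-CT_4} (a metric space is countably compact iff compact) gives compactness. Conversely, let $X$ be compact. For each $n$, the set $X\setminus B(x_0,\tfrac1n)$ is compact and consists of isolated points, so it is finite. Hence $X$ is countable, and a countable metric space is zero-dimensional, since all but countably many spheres around a point are empty. So $X$ is a compact zero-dimensional metric space, and Theorem \ref{0-dim-CT4} shows that $X$ is $CT_4$.

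For $CT_3$ I would split $\mathcal{T}_3(X)$ (identified with pairs $(x,B)$) into two clopen pieces:
\[\mathcal{I}=\{(x,B)\mid x\neq x_0\},\qquad \mathcal{J}=\{(x_0,B)\mid x_0\notin B\}.\]
Each set $\{x\}\times\Cld(X)$ with $x$ isolated is open, so $\mathcal{I}$ is open. The set $\mathcal{J}=\mathcal{T}_3(X)\cap(\{x_0\}\times\Cld(X))$ is closed in $\mathcal{T}_3(X)$. The pieces are complementary, so both are clopen and the map can be defined separately on each.

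On $\mathcal{I}$ I set $\phi(x,B)=(X\setminus\{x\},\{x\})$. Both sets are closed because $x$ is isolated. The map is locally constant, since it is constant on each open set $\{x\}\times(\cdots)$, and all the separation requirements are immediate.

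On $\mathcal{J}$ I would use two facts. First, every subset of $X$ containing $x_0$ is closed. Second, a subset missing $x_0$ is closed iff it stays a positive distance from $x_0$. I would take the Voronoi-type pair
\[\phi_1(x_0,B)=\{y\mid d(y,x_0)\ge d(y,B)\},\qquad \phi_2(x_0,B)=\{y\mid d(y,x_0)\le d(y,B)\}.\]
These are closed, their union is $X$, $x_0\notin\phi_1$ because $d(x_0,B)>0$, and $B\cap\phi_2=\emptyset$. Continuity would be checked on the Vietoris subbase, using Lemma \ref{continty-map1}: the functions $(y,B)\mapsto d(y,B)-d(y,x_0)$ are continuous.

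For $U^-$ the argument should be easy. If $\phi_j(x_0,B)$ meets $U$ at an isolated point $y$ where the defining inequality is strict, then that strict inequality persists for nearby $B'$. If the meeting point is $x_0$ (only possible for $\phi_2$), then $x_0\in\phi_2(x_0,B')$ always. The remaining case is a tie $d(y,x_0)=d(y,B)$. Since $y$ is isolated, I cannot approximate it by strict points as in the CP argument, so I would handle it through the $U^+$ condition and the choice of radii.

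For $U^+$ the danger is that points outside $U$ enter $\phi_j(x_0,B')$ for $B'$ near $B$. When $x_0\in U$, $U$ contains a ball $B(x_0,\varepsilon)$, and only points with $d(y,x_0)\ge\varepsilon$ matter. I would use Vietoris neighbourhoods of the form $(B(B,\eta))^+\cap\bigcap_i (B(b_i,\eta))^-$ to obtain uniform control $|d(y,B')-d(y,B)|<2\eta$ on the relevant region.

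I expect the main obstacle to be the tie set $\{y\mid d(y,x_0)=d(y,B)\}$, which may be infinite and closed-discrete when $X$ is not compact. Points just on the wrong side of the tie can flip membership under arbitrarily small Vietoris perturbations, which would break upper semicontinuity. If the plain Voronoi pair fails for this reason, my fallback is to use the overlap freedom and make membership locally constant in $B$ for each isolated $y$ away from $x_0$. Concretely, I would put $y\in\phi_1(x_0,B)$ iff $d(y,x_0)\ge \tfrac12 d(y,B)$ and $y\in\phi_2(x_0,B)$ iff $d(y,x_0)\le 2\,d(y,B)$. These still cover $X$ and keep the separations, and a tie for one inequality lies strictly inside the other set. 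I would then recheck the $U^\pm$ conditions using the uniform estimate above.
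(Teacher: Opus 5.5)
Your argument for ``$CT_4$ iff compact'' is fine. It is the paper's argument via Theorems \ref{non-T4} and \ref{0-dim-CT4}, with the zero-dimensionality spelled out. The $CT_3$ part, however, breaks at the first step.

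``$\mathcal{I}$ is open'' and ``$\mathcal{J}$ is closed'' are the same fact. To make both pieces clopen you would need $\mathcal{J}$ to be open, and it is not. If $(x_0,B)\in\mathcal{J}$ and $x_n\to x_0$ with $x_n\neq x_0$, then $x_n\notin B$ for large $n$, because $d(x_0,B)>0$. So $(x_n,B)\in\mathcal{I}$ and $(x_n,B)\to(x_0,B)$; in fact $\mathcal{J}$ has empty interior.

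This is not a cosmetic slip, because your formula on $\mathcal{I}$ cannot be continued across $\mathcal{J}$ at all. The set $\phi_1(x_n,B)=X\setminus\{x_n\}$ contains $x_0$ for every $n$, so it never enters the Vietoris open set $(X\setminus\{x_0\})^+$. But $\phi_1(x_0,B)$ is required to lie in that set. Hence, whatever you define on $\mathcal{J}$, the resulting map is discontinuous at every point of $\mathcal{J}$.

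The missing idea is that pairs $(x,B)$ with $x$ close to $x_0$ \emph{relative to $B$} must be handled together with $x=x_0$, by a locally constant rule. The paper does this as follows.
\begin{itemize}
\item Every subset of $X$ missing $x_0$ is clopen. So for $B\not\ni x_0$, the condition $\tfrac1{n+1}\le d(x_0,B)<\tfrac1n$, i.e.\ $B\in B(x_0,\tfrac1n)^-\cap\bigl(X\setminus B(x_0,\tfrac1{n+1})\bigr)^+$, is open in $B$.
\item On the piece where moreover $d(x,x_0)<\tfrac1{n+1}$ (this includes $x=x_0$), it takes the constant pair $\bigl(X\setminus B(x_0,\tfrac1{n+1}),\,B(x_0,\tfrac1{n+1})\bigr)$. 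The second set is closed because it contains $x_0$.
\item It uses $(X\setminus\{x\},\{x\})$ only when $x_0\in B$ or $d(x,x_0)\ge\tfrac1{n+1}$. There $x$ is isolated and the map is constant on $(\{x\}\times\Cld(X))\cap(\text{piece})$.
\end{itemize}

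Your Voronoi pair, and the $\tfrac12$/$2$ fallback, are not continuous even on $\mathcal{J}$. Every Vietoris neighbourhood of $B$ contains finite sets $F\subseteq B$. Suppose the tie at an isolated point $y$ is a non-attained infimum. For example, take $X=\{(0,0),(1,0)\}\cup\{(2+\tfrac1n,0)\mid n\in\N\}\cup\{(0,\tfrac1k)\mid k\in\N\}\subseteq\R^2$, with $y=(1,0)$ and $B=\{(2+\tfrac1n,0)\mid n\in\N\}$. Then $y\in\phi_1(x_0,B)$, but $y\notin\phi_1(x_0,F)$ for every finite $F\subseteq B$. So $\phi_1$ fails at the open set $\{y\}^-$. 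Using $3+\tfrac1n$ in place of $2+\tfrac1n$ breaks the fallback the same way. Letting $\phi_1$ and $\phi_2$ overlap cannot cure this, since each must be continuous on its own.
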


\begin{proof}
We may replace the metric $d$ by a topologically equivalent metric $d_1$ on $X$ satisfying $d_1(x,x_0)<1$ for all $x\in X$ and
\[
B_{d_1}\!\left(x_0,\tfrac1n\right)\setminus B_{d_1}\!\left(x_0,\tfrac1{n+1}\right)\neq\emptyset
\]
for every $n\in\mathbb{N}$. We henceforth assume $d=d_1$ without loss of generality.
	
For every $n\in\N$, let
\[
\begin{array}{lll}
\mathcal{T}_3^{d}(X)&=& \{(x,B)\in \mathcal{T}_3(X)\mid d(b_0,x_0)<\frac{1}{n}\leq d(x,x_0) \mbox{ for some } b_0\in B, n\in\N\}, \\
 \mathcal{T}_3^{u,n}(X)&=&\{(x,B)\in \mathcal{T}_3(X)\mid \frac{1}{n+1}\leq d(x,x_0)< \frac{1}{n} \mbox{ and } \\
&&~~d(b,x_0)\geq \frac{1}{n+1}  \mbox{ for each $b\in B$ and  }d(b_0,x_0)< \frac{1}{n} \mbox{ for some }b_0\in B \},\\
\mathcal{T}_3^{u,n,\infty}(X)&=& \{(x,B)\in \mathcal{T}_3(X)\mid d(x,x_0)<\frac{1}{n+1}, d(b,x_0)\geq \frac{1}{n+1}\\
 && ~~\mbox{ for each $b\in B$ and  }d(b_0,x_0)< \frac{1}{n} \mbox{ for some }b_0\in B \}.
\end{array}
\]
Then $\{\mathcal{T}_3^{d}(X),\mathcal{T}_3^{u,n}(X),\mathcal{T}_3^{u,n,\infty}(X)\mid n\in\N\}$ is a partition of $\mathcal{T}_3(X)$.
Moreover, we verify that each element in the family is open in $\mathcal{T}_3(X)$, which shows that the partition consists of clopen sets. For each $(x,B)\in \mathcal{T}_3^{d}(X)$, there exists
$n\in\N$ such that $d(b_0,x_0)<\frac{1}{n}\leq d(x,x_0)$ for some $ b_0\in B$.
Then
\[
 (x,B)\in\{x\}\times B(x_0,\tfrac{1}{n})^-\subseteq \mathcal{T}_3^{d}(X).
\]
Hence, $\mathcal{T}_3^{d}(X)$ is open. For each $(x,B)\in \mathcal{T}_3^{u,n}(X)$,
\[
 (x,B)\in\{x\}\times \left( B(x_0,\tfrac{1}{n})^-\cap (X\sm B(x_0,\tfrac{1}{n+1}))^+\right)\subseteq \mathcal{T}_3^{u,n}(X).
\]
Thus, $\mathcal{T}_3^{u,n}(X)$ is open. And for each $(x,B)\in \mathcal{T}_3^{u,n,\infty}(X)$,
\[
 (x,B)\in B(x_0,\tfrac{1}{n+1})\times \left( B(x_0,\tfrac{1}{n})^-\cap (X\sm B(x_0,\tfrac{1}{n+1}))^+\right)\subseteq \mathcal{T}_3^{u,n,\infty}(X).
\]
Therefore, $\mathcal{T}_3^{u,n,\infty}(X)$ is also open.

 Define $\phi:\mathcal{T}_3(X)\to \Cld(X)^2$ as follows, for $(x,B)\in \mathcal{T}_3(X)$,
\[
\phi(x,B)=\left\{
\begin{array}{ll}
(X\sm\{x\},\{x\}) & (x,B)\in \mathcal{T}_3^d(X)\cup \mathcal{T}_3^{u,n}(X),\\
(X\sm B(x_0,\tfrac{1}{n+1}), B(x_0,\frac{1}{n+1})) & (x,B)\in \mathcal{T}_3^{u,n,\infty}(X).
\end{array}
\right.
\]
For each $(x,B)\in \mathcal{T}_3^d(X)\cup \mathcal{T}_3^{u,n}(X)$, $x$ is an isolated point and hence, $\phi(x,B)\in\Cld(X)^2$. Moreover,
$\phi$ is constant  on the open set $(\{x\}\times \Cld(X))\cap (\mathcal{T}_3^d(X)\cup \mathcal{T}_3^{u,n}(X))$ in $\mathcal{T}_3(X)$.
It follows that $\phi$ is continuous on $\mathcal{T}_3^d(X)\cup \mathcal{T}_3^{u,n}(X)$. Obviously, it also satisfies the other conditions. Therefore,
$\phi$ is a $CT_3$-map on $\mathcal{T}_3^d(X)\cup \mathcal{T}_3^{u,n}(X)$.
Further, for each $n\in\N$ and $(x,B)\in  \mathcal{T}_3^{u,n,\infty}(X)$, $\phi(x,B)$ is in $\Cld(X)^2$, and $\phi$ is constant  on the open set
$\mathcal{T}_3^{u,n,\infty}(X)$. Hence, it is continuous on $\mathcal{T}_3^{u,n,\infty}(X)$ and, moreover, is a $CT_3$-map.

Therefore, $X$ is $CT_3$.

The second statement follows from Theorems \ref{non-T4} and  \ref{0-dim-CT4}.
\end{proof}
\begin{rem} Example \ref{one-point} shows that the metrizability assumption in the above theorem cannot be replaced by compactness.
Theorem \ref{CT3-imp-second} shows that, for a countable space $X$ with a unique non-isolated point, $X$ is $CT_3$ if and
only if $X$ is metrizable.
\end{rem}
For an infinite cardinal $m$, we identify $m$ with the least ordinal of cardinality $m$; thus, $m$ is the set of all ordinals smaller than $m$.
Let
 \[ H(m)=\{\infty\}\cup\{(n,\alpha )\mid  n\in\mathbb{N},\alpha <m\}\]
 with the metric:
\[ d(x,y)=\left\{
\begin{array}{ll}
|\tfrac{1}{n_1}-\tfrac{1}{n_2}| & \mbox{if } x=(n_1,\alpha ), y=(n,\alpha ),\\
\tfrac{1}{n} & \mbox{if } x=(n,\alpha ), y=\infty \mbox{ or }x=\infty, y=(n,\alpha ),\\
\tfrac{1}{n_1}+\tfrac{1}{n_2} & \mbox{if } x=(n_1,\alpha _1), y=(n_2,\alpha _2)\mbox{ with }\alpha _1\not=\alpha _2.
\end{array}\right.
\]
Then $(H(m),d)$ is a metric space with the unique non-isolated point $\infty$.
That is, $(H(m),d)$ is a subspace of the hedgehog $J(m)$ of spininess $m$, see \cite[Example 4.1.5]{Engelking-1989}.
Using Theorem \ref{metric+one-noniso-be-CT3}, we have the following example:

\begin{ex}\label{hedgehog} The space $(H(m),d)$ is $CT_3$ but not $CT_4$.
\end{ex}



\section{Examples of  $CT_2$-spaces which are not  $CT_3$}

We have the following interesting result:

\begin{thm}\label{CT3-imp-second} If $X$ is a $CT_3$-space, then $w(Y)= d(Y)$ for each  subspace $Y$  of $X$. In particular, every separable subspace of a $CT_3$-space is metrizable.
\end{thm}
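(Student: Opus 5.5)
The inequality $d(Y)\le w(Y)$ holds in every space, so I only need $w(Y)\le d(Y)$. The plan is to build a base of $Y$ indexed by pairs $(d,F)$, where $d$ is a point of a dense set and $F$ a finite subset of it, using the $CT_3$-map $\phi$ of $X$. The point is that the second coordinate of $\phi$ can be fed finite sets, because finite subsets of a dense set are Vietoris-dense among the closed sets contained in its closure.

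First the trivial case. If $Y$ has a finite dense set $D$, then $Y=D$, since finite sets are closed in a $T_1$-space. So $Y$ is finite and discrete and $w(Y)=d(Y)$. Otherwise fix a dense $D\subseteq Y$ with $|D|=d(Y)$ infinite. For $d\in D$ and a nonempty finite $F\subseteq D\setminus\{d\}$, the pair $(d,F)$ lies in $\mathcal{T}_3(X)$. Put
\[W(d,F)=Y\setminus \phi_1(d,F),\]
which is open in $Y$. There are at most $|D|\cdot|D|^{<\omega}=|D|$ such sets. Since $\phi_1(d,F)\cup\phi_2(d,F)=X$, we also have $W(d,F)\subseteq Y\cap\phi_2(d,F)$.

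I will show these sets form a base of $Y$. Let $y\in Y$ and let $U$ be open in $X$ with $y\in U$. Put $B=\cl_X Y\setminus U$. If $B=\emptyset$, then $Y\subseteq U$ and there is nothing to do, since $Y$ itself is of the required form or can be added to the family. So assume $B\neq\emptyset$; then $(y,B)\in\mathcal{T}_3(X)$.

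Since $\phi_2(y,B)\cap B=\emptyset$, we get $\phi_2(y,B)\subseteq X\setminus B=U\cup(X\setminus\cl_X Y)=:U'$, which is open. Since $X$ is $T_3$ (every $CT_3$-space is), choose an open $V\ni y$ with $\cl V\cap\phi_1(y,B)=\emptyset$. Then
\[\phi_1(y,B)\in(X\setminus\cl V)^+ \quad\mbox{and}\quad \phi_2(y,B)\in U'^+.\]
By continuity of $\phi$, there are an open $O\ni y$ and a basic Vietoris neighbourhood $\langle G_1,\dots,G_k\rangle$ of $B$ such that, whenever $z\in O$, $B'\in\langle G_1,\dots,G_k\rangle$ and $z\notin B'$,
\[\phi_1(z,B')\subseteq X\setminus\cl V \quad\mbox{and}\quad \phi_2(z,B')\subseteq U'.\]

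Now I choose the witnesses from $D$. Pick $d\in D\cap O\cap V$, possible since $y\in Y$ and $D$ is dense in $Y$. Each $G_i$ meets $B\subseteq\cl_X Y=\cl_X D$, so it meets $D$. Choose $d_i\in D\cap G_i$ with $d_i\neq d$. This is possible when $G_i\cap D$ has another point. Otherwise, shrink $O$ and $V$ to avoid the finitely many bad points $d_i$, which are distinct from $y$ because $y\notin B$ can be separated from them. Set $F=\{d_1,\dots,d_k\}$, so that $F\in\langle G_1,\dots,G_k\rangle$.

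Then $y\in V$ and $\phi_1(d,F)\cap\cl V=\emptyset$, so $y\in W(d,F)$. Also
\[W(d,F)\subseteq Y\cap\phi_2(d,F)\subseteq Y\cap U'=Y\cap U.\]
Thus $\{W(d,F)\}$ is a base of $Y$ of cardinality at most $d(Y)$.

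The "in particular" part follows because a separable subspace $Y$ is then second countable and regular, hence metrizable by Urysohn's theorem. I expect the main obstacle to be the bookkeeping in choosing $d_i\neq d$ inside the $G_i$, that is, ensuring $(d,F)\in\mathcal{T}_3(X)$. I would first try the shrinking argument above, using that $d$ can be taken arbitrarily close to $y$ while $F$ stays near $B$, which is bounded away from $y$.
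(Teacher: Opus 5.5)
Your proof is correct and is essentially the paper's argument: the paper also takes the base $\{Y\setminus\phi_1(d,F)\}$ indexed by finite pairs from a dense set $D$, uses that such pairs are Vietoris-dense in $\mathcal{T}_3$ of (the closure of) $Y$, and concludes via $Y\setminus\phi_1(d,F)\subseteq Y\cap\phi_2(d,F)\subseteq U$. Two simplifications are available. The regular neighbourhood $V$ is unnecessary, since $(X\setminus\{y\})^+$ is already Vietoris-open. The $d\notin F$ bookkeeping is cleaner if you first shrink so that $O\cap G_i=\emptyset$, as the paper does. Alternatively, replace each $G_i$ by the open set $G_i\setminus\{y\}$, which still meets $B$, choose the $d_i$ there, and then pick $d\in D\cap(O\setminus F)$.
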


\begin{proof} Without loss of generality, we can assume that $Y$ is a closed subspace of $X$. Then, $\mathcal{T}_3(Y)$ is a subspace of $\mathcal{T}_3(X)$.

  Let $\phi$ be a $CT_3$-map for $X$ and let $D$ be a dense set in $Y$ with $|D|=d(Y)$. We only consider the case that $d(Y)$ is infinite.
  Define
  \[ \mathcal{F}=\{(d,F)\in \mathcal{T}_3(Y)\mid \{d\}\cup F \mbox{ is a finite set in } D\}. \]
  Then, $|\mathcal{F}|=d(Y)$. For each $(y,B)\in \mathcal{T}_3(Y)$ and each neighborhood $U\times \langle V_1,V_2,\cdots,V_n\rangle$ of $(y,B)$ in $\mathcal{T}_3(Y)$, where $U$ and $V_i$ are open in $Y$
  for $i\leq n$, we can assume that $U\cap V_i=\emptyset$ for $i\leq n$. Choose $d\in D\cap U$ and $d_i\in D\cap V_i$ for $i\leq n$.
  Then $(d,\{d_1,d_2,\cdots, d_n\})\in \mathcal{F}\cap \left(U\times \langle V_1,V_2,\cdots,V_n\rangle\right)$. It follows that $\mathcal{F}$ is dense
  in $\mathcal{T}_3(Y)$.

  Let
  \[ \mathcal{B}=\{Y\sm \phi_1(d,F)\mid (d,F)\in\mathcal{F}\}.\]
 Then $\mathcal{B}$ is a family of open sets in $Y$ and $| \mathcal{B}|\leq d(Y)$. To show $w(Y)\leq d(Y)$, it suffices to verify that $\mathcal{B}$ is a base for $Y$.

  For each $y\in Y$ and for  each open neighborhood $U$ of $y$ in $Y$, let $B=Y\setminus U$.
  Then $(y,B)\in \mathcal{T}_3(Y)$. Then, we have that $\phi_1(y,B)\not\ni y$ and $\phi_2(y,B)\cap B=\emptyset$.
That is,
\[ \phi(y,B)\in (X\sm\{y\})^+\times (X\sm B)^+.\]
 By the continuity of $\phi$, $\phi^{-1}\left((X\sm\{y\})^+\times (X\sm B)^+\right)$ is an  open set in $\mathcal{T}_3(X)$.
 Notice that
 \[ (y,B)\in \mathcal{T}_3(Y)\cap \phi^{-1}\left((X\sm\{y\})^+\times (X\sm B)^+\right).\]
 It follows from the density of $\mathcal{F}$ in $\mathcal{T}_3(Y)$ that we can choose
 \[ (d,F)\in\mathcal{F}\cap \phi^{-1}\left((X\sm\{y\})^+\times (X\sm B)^+\right).\]
 Thus, $(d,F)\in\mathcal{F}$ and
 $\phi_1(d,F)\in (X\setminus\{ y\})^+$ and $\phi_2(d,F)\in (X\setminus B)^+.$
That is,
\[ \phi_1(d,F)\not\ni y,~~\phi_2(d,F)\cap B=\emptyset.\]
It follows from $\phi_1(d,F)\cup \phi_2(d,F)=X$ that
\[ y\in Y\setminus \phi_1(d,F)=Y\cap (X\setminus \phi_1(d,F))\subseteq Y\cap \phi_2(d,F)\subseteq Y\setminus B=U.\]
Note that $Y\setminus \phi_1(d,F)\in\mathcal{B}$. We have that $\mathcal{B}$
is a base for $Y$.
\end{proof}

\begin{rem} Example \ref{omega1} shows that the converse of the first statement in Theorem \ref{CT3-imp-second}
does not hold; however, we do not know whether or not the converse of the second statement holds, that is, whether every separable metrizable space is $CT_3$.\end{rem}

\begin{cor} Any space containing $2^{\omega_1}$ as a subspace is not $CT_3$, where $\omega_1$ is the first uncountable cardinal and $2^{\omega_1}$
is the product space of the two-point spaces.
\end{cor}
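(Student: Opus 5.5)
The plan is to apply Theorem \ref{CT3-imp-second}, together with the fact that the property of being $CT_3$ is inherited by all subspaces relevant to that theorem. Suppose $X$ is $CT_3$ and contains a subspace $Y$ homeomorphic to $2^{\omega_1}$. By Theorem \ref{CT3-imp-second}, $w(Y)=d(Y)$. Note that the proof of that theorem reduces to closed subspaces. Here $Y$ is compact, hence closed in the Hausdorff space $X$. The space $X$ is Hausdorff because every $CT_3$-space is $T_3$ and hence, being $T_1$, is Hausdorff. So no subtlety arises there: the theorem applies to $Y$ directly in any case.

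Next I would compute the two cardinal invariants of $2^{\omega_1}$. For density, the Hewitt--Marczewski--Pondiczery theorem gives that a product of at most $2^{\omega}=\mathfrak{c}$ separable spaces is separable. Since $\omega_1\le \mathfrak{c}$, the space $2^{\omega_1}$ is separable, so $d(Y)=\omega$. For weight, $w(2^{\omega_1})=\omega_1$: a product of $\kappa$ nontrivial $T_1$ spaces has weight at least $\kappa$. Equivalently, $2^{\omega_1}$ is not metrizable, because it is compact but not first countable. One can see this directly: a countable family of basic neighborhoods of a point restricts only countably many coordinates, so it cannot form a local base.

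Combining these, $\omega=d(Y)=w(Y)=\omega_1$, a contradiction. Put differently, $Y$ is a separable non-metrizable subspace, which contradicts the second statement of Theorem \ref{CT3-imp-second}. I expect the only step needing care to be citing the separability of $2^{\omega_1}$. I would invoke Engelking's Theorem 2.3.15 (Hewitt--Marczewski--Pondiczery) rather than reprove it. If a self-contained argument is preferred, one can embed $\omega_1$ into the reals as distinct points $r_\alpha$ and take the countable set of functions $\alpha\mapsto \chi_U(r_\alpha)$, where $U$ ranges over finite unions of rational intervals. These functions are dense, since any finitely many distinct reals can be separated by such sets.
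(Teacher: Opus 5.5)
Your proof is correct and is exactly the argument the paper intends. The corollary is stated without proof right after Theorem \ref{CT3-imp-second} as an immediate consequence of it: $2^{\omega_1}$ is separable by Hewitt--Marczewski--Pondiczery but has weight $\omega_1$ (equivalently, it is not metrizable). One small point is that your opening phrase about $CT_3$ being ``inherited by subspaces'' is not something the paper establishes, except for clopen subspaces; it is also not needed, since you only use the conclusion $w(Y)=d(Y)$, which Theorem \ref{CT3-imp-second} states for arbitrary subspaces $Y$.
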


\begin{ex} For every uncountable cardinal $m$, neither $\II^m$ nor $2^m$ is $CT_3$.
\end{ex}

 The following theorem gives a class of $CT_2$-spaces:

\begin{thm}\label{CT_2notCT3}
 Let $X=D\cup\{x\}$ be  a  space, where $D$ is a discrete subspace. If $X$ satisfies the condition that
 \begin{equation}\label{equ-711}
 \begin{split}\mbox{there exists a linear order $\leq$ on $D$ such that} \\
 \mbox{ every upper-bounded subset in $(D,\leq )$ is closed in $X$,}
 \end{split}
 \end{equation}
 then $X$ is $CT_2$.
\end{thm}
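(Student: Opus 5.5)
The plan is to write down an explicit $CT_2$-map, built from the linear order $\leq$ on $D$, that is \emph{locally constant} on $\mathcal{T}_2(X)=X^2\setminus\triangle(X)$. Continuity is then automatic.

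\textbf{Step 1: preliminary facts.} First I note that every point of $D$ is isolated in $X$. If $d\in D$, some open set $W$ of $X$ satisfies $W\cap D=\{d\}$, since $D$ is discrete. Because $X$ is $T_1$, the set $W\setminus\{x\}$ is open, and it equals $\{d\}$. Hence every subset of $D$ is open in $X$.

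For $a\in D$ put $\downarrow a=\{d\in D\mid d\leq a\}$. This set is upper-bounded, so by condition \eqref{equ-711} it is closed in $X$. It is also open, being a subset of $D$. So $\downarrow a$ is clopen, does not contain $x$, and its complement $X\setminus\downarrow a$ is a clopen set containing $x$.

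\textbf{Step 2: the map.} For $(a,b)\in X^2\setminus\triangle(X)$ define
\[
\phi(a,b)=\left\{
\begin{array}{ll}
(X\setminus\downarrow a,\ \downarrow a) & \mbox{if } a\in D \mbox{ and } (b=x \mbox{ or } a<b),\\
(\downarrow b,\ X\setminus\downarrow b) & \mbox{if } b\in D \mbox{ and } (a=x \mbox{ or } a>b).
\end{array}\right.
\]
These two cases are exhaustive and disjoint, since $a\neq b$ and the order on $D$ is linear. All sets involved are non-empty and closed: $\downarrow a$ contains $a$, and $X\setminus\downarrow a$ contains $x$.

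The separation conditions hold in each case.
\begin{itemize}
\item In the first case, $a\notin X\setminus\downarrow a$, while $b\notin\downarrow a$ because $b=x$ or $b>a$.
\item In the second case, $a\notin\downarrow b$ because $a=x$ or $a>b$, while $b\notin X\setminus\downarrow b$.
\item In both cases the two sets are complementary, so their union is $X$.
\end{itemize}

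\textbf{Step 3: continuity.} I will exhibit, around each point of $\mathcal{T}_2(X)$, an open neighbourhood on which $\phi$ is constant.
\begin{itemize}
\item If $a,b\in D$, take $\{a\}\times\{b\}$, which is open because both points are isolated.
\item If $a\in D$ and $b=x$, take $\{a\}\times(X\setminus\downarrow a)$. Every $b'$ in it is either $x$ or some element of $D$ with $b'>a$, so $\phi$ equals $(X\setminus\downarrow a,\downarrow a)$ throughout.
\item If $a=x$ and $b\in D$, take $(X\setminus\downarrow b)\times\{b\}$. Symmetrically, $\phi$ equals $(\downarrow b,X\setminus\downarrow b)$ throughout.
\end{itemize}
Hence $\phi$ is locally constant, so it is continuous into $\Cld(X)^2$, and $X$ is $CT_2$.

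The only real obstacle is the behaviour near the non-isolated point $x$. The naive choice $\phi(a,b)=(X\setminus\{a\},\{a\})$ for $a\in D$ would fail to converge as $a\to x$. The point of using $\downarrow b$ and $\downarrow a$ is that these sets are closed by \eqref{equ-711}, and each has a complement that is a neighbourhood of $x$. This lets the formula stay the same on a whole neighbourhood of $x$.
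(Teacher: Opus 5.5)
Your proof is correct and is essentially the paper's. The paper also places $x$ above $D$ and partitions $\mathcal{T}_2(X)$ into the clopen pieces $\{d\}\times\uparrow d$ and $\uparrow d\times\{d\}$, where $\uparrow d=\{y\in X\mid d<y\}=X\setminus\downarrow d$; these are exactly your neighbourhoods, and $\phi$ is taken constant on each piece. The only difference is that the paper uses the values $(X\setminus\{d\},\{d\})$ and $(\{d\},X\setminus\{d\})$ instead of your down-sets, so your closing remark misattributes the mechanism: cutting off the singleton of the $\leq$-smaller point works fine, because the order condition is needed only to make the case split locally constant (i.e.\ to make $\uparrow d$ open), not to choose the values.
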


\begin{proof}  We can extend $\leq $ to $X$ such that $x$ is the largest element in $(X,\leq)$. For every $d\in D$,
by our assumption, the set
\[ \uparrow d=\{y\in X\mid d< y\}\]
 is a clopen set in $X$. Thus,
\[ \mathcal{T}^{b,d}_2(X)=\{d\}\times \uparrow d,~\mathcal{T}^{u,d}_2(X)=\uparrow d\times\{d\}. \]
are clopen sets in $\mathcal{T}_2(X)$. It follows that $\{\mathcal{T}^{b,d}_2(X),\mathcal{T}^{u,d}_2(X)\mid d\in D\}$ is a partition of $\mathcal{T}_2(X)$ consisting of clopen sets.

For $(d,y)\in\mathcal{T}_2^{b,d}(X)$, let
\[ \phi(d,y)=\left( X\setminus \{d\},\{d\}\right).\]
And for $(y,d)\in\mathcal{T}_2^{u,d}(X)$, let
\[ \phi(y,d)=\left( \{d\},X\setminus \{d\}\right).\]
It is not hard to verify that, for each $(d,y)\in\mathcal{T}_2^{b,d}(X)$, $\phi(d,y)\in\Cld(X)^2$, and $d\not\in \phi_1(d,y)$, $y\not\in \phi_2(d,y)$,
 $\phi_1(d,y)\cup \phi_2(d,y)=X$.
Moreover,
$\phi$ is constant on  $\mathcal{T}_2^{b,d}(X)$, and hence $\phi$ is continuous on $\mathcal{T}_2^{b,d}(X)$.
Therefore, $\phi$ is a $CT_2$-map on $\mathcal{T}_2^{b,d}(X)$. Similarly, $\phi$ is a $CT_2$-map on $\mathcal{T}_2^{u,d}(X)$.
Therefore, $X$ is $CT_2$.
\end{proof}

\begin{rem} It is trivial to observe that Condition (\ref{equ-711}) holds if $D$ is countable. However, Example \ref{one-point} shows that Condition (\ref{equ-711}) in  Theorem \ref{CT_2notCT3} is essential.
\end{rem}

Let $\beta D$ be the C\'{e}ch-Stone compactification of an infinite discrete space $D$ and $p\in \beta D\setminus D$. As a subspace of $\beta D$,
 $D\cup\{p\}$ is a  space with the unique non-isolated point $p$. We have the following result:

 \begin{lem}[\cite{Comfort},
\cite{Comfort-Negrepontis}]\label{Stone-compactification} For each $p\in \beta \N\setminus \N$, $d(\N\cup\{p\})<w(\N\cup\{p\})$. For each infinite discrete space $D$, there exists $p\in \beta D\setminus D$
 such that $d(D\cup\{p\})<w(D\cup\{p\})$.
 \end{lem}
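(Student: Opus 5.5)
The plan is to compute $d$ and $w$ of $X_p=D\cup\{p\}$ directly, with $|D|=\kappa$. Since $D$ is dense in $X_p$ and every point of $D$ is isolated, $d(X_p)=\kappa$. The weight is governed by the character of $p$. A local base at $p$ in $X_p$ consists of the sets $\{p\}\cup A$ with $A\in p$, where $p$ is regarded as a free ultrafilter on $D$. Indeed, the clopen sets $\cl_{\beta D}A$ with $A\in p$ form a local base at $p$ in $\beta D$, and $\cl_{\beta D}A\cap (D\cup\{p\})=A\cup\{p\}$. Any base of $X_p$ must contain the $\kappa$ singletons of $D$ together with a local base at $p$. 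Hence $w(X_p)=\max\{\kappa,\chi(p,X_p)\}$, and $\chi(p,X_p)$ is the least size of a family $\mathcal{B}\subseteq p$ such that every member of $p$ contains a member of $\mathcal{B}$. So it suffices to show that this generating number $\chi(p)$ of $p$ is strictly larger than $\kappa$.

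For the first statement ($D=\N$, any $p\in\beta\N\setminus\N$), I will argue by diagonalization. Suppose $\{A_n\mid n\in\N\}\subseteq p$ generated $p$. Since $p$ is free, every $A_n$ is infinite. By closing under finite intersections (still inside $p$, still countable) we may assume the family is decreasing. Now pick distinct $a_n\in A_n$ inductively and split $\{a_n\}$ into two infinite disjoint sets $S_0=\{a_{2n}\}$ and $S_1=\{a_{2n+1}\}$. Each $A_n$ meets both $S_0$ and $S_1$ in infinitely many points, because $A_n\supseteq A_m\ni a_m$ for $m\ge n$. Hence no $A_n$ is contained in $\N\setminus S_0$ or in $\N\setminus S_1$. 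However, $S_0$ and $S_1$ are disjoint, so at least one of $\N\setminus S_0,\N\setminus S_1$ lies in $p$, and the corresponding set contains no $A_n$. This contradiction gives $\chi(p)>\omega$, and therefore $w(\N\cup\{p\})>\omega=d$.

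For the second statement, with arbitrary infinite $\kappa$, the diagonal argument does not work for all $p$. The reason is that a generating family of size $\kappa$ cannot be enumerated as a decreasing sequence, and if $p$ is not uniform then $\chi(p)$ can be as small as the character of an ultrafilter concentrated on a smaller set. So I will instead invoke the classical existence of a uniform ultrafilter $p$ on $D$ with $\chi(p)=2^{\kappa}$. This comes from Pospíšil's/Hausdorff's independent family argument: take an independent family $\{I_\xi\mid \xi<2^\kappa\}$ of subsets of $D$. For each function $f:2^\kappa\to 2$, the family $\{I_\xi^{f(\xi)}\}$ together with the co-small sets has the finite intersection property and extends to an ultrafilter. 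Counting then shows that some such ultrafilter cannot be generated by $\kappa$ sets, since a subfamily of size $\kappa$ can decide only $\kappa$ of the independent sets. Then $w(D\cup\{p\})\ge 2^\kappa>\kappa=d(D\cup\{p\})$.

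The main obstacle is this last step, the existence of $p$ with large character for uncountable $\kappa$. Beyond the standard combinatorics of independent families, it needs care in arguing that some ultrafilter extending the independent-family filter has character $2^\kappa$. This is exactly the content of the cited results of Comfort and Comfort--Negrepontis. In the paper it is reasonable to cite it and give the direct proof only for $\N$.
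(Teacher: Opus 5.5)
The paper gives no proof of this lemma; it only cites Comfort and Comfort--Negrepontis, so your proposal takes a different route by actually proving it. Your reduction is correct: $d(D\cup\{p\})=|D|=\kappa$ and $w(D\cup\{p\})=\max\{\kappa,\chi(p)\}$, where $\chi(p)$ is the least size of a subfamily of $p$ generating $p$. Your diagonal argument for $\N$ is also correct: take decreasing generators $A_n$, pick distinct $a_n\in A_n$, split them into $S_0,S_1$, and note that whichever of $\N\setminus S_0,\N\setminus S_1$ lies in $p$ contains no $A_n$. This is a complete, self-contained proof of the first statement, which is more than the paper gives.

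For the second statement, however, your stated justification is wrong. A family of $\kappa$ sets can decide far more than $\kappa$ members of an independent family. Take the standard family $I_X=\{(F,\mathcal A)\mid X\cap F\in\mathcal A\}$, $X\subseteq\kappa$, on $D=\{(F,\mathcal A)\mid F\in[\kappa]^{<\omega},\ \mathcal A\subseteq\mathcal P(F)\}$. The single set $B=\{(F,\mathcal P(F))\mid F\in[\kappa]^{<\omega}\}$ has size $\kappa$ and lies inside every $I_X$. It is also compatible with the filter generated by all $I_X$ and the co-small sets. So an ultrafilter extending that filter may contain $B$, and then the independent family gives no lower bound on its character.

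The step is still easy to close without citing anything, by pure counting. Distinct $f$ yield distinct ultrafilters $p_f$, since they disagree on some $I_\xi$, so there are $2^{2^\kappa}$ of them. On the other hand, an ultrafilter with $\chi(p)\le\kappa$ is determined by a generating family in $[\mathcal P(D)]^{\le\kappa}$, a set of cardinality $(2^\kappa)^\kappa=2^\kappa$. Hence some $p_f$ has $\chi(p_f)>\kappa$. Such a $p_f$ is automatically free, because principal ultrafilters have character $1$, so even the co-small sets are unnecessary.

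If you want every extension to work, take $f\equiv 1$ and adjoin to the base the sets $D\setminus\bigcap_{\xi\in S}I_\xi$ for infinite $S\subseteq 2^\kappa$. Independence keeps the finite intersection property. If $\mathcal B\subseteq p$ generates $p$ with $|\mathcal B|<2^\kappa$, choose $B_\xi\in\mathcal B$ with $B_\xi\subseteq I_\xi$. Some fibre of $\xi\mapsto B_\xi$ is an infinite set $S$, so $B\subseteq\bigcap_{\xi\in S}I_\xi$, while the complement of that intersection lies in $p$, a contradiction. This shows $\chi(p)=2^\kappa$. Either way, your approach yields a full proof of the lemma instead of a citation.
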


 Let $I$ be an infinite set and for each $i\in I$, let $S_i=\{x_{i,j}\mid j=1,2,\cdots,\infty\}$ be a convergent sequence with the limit $x_{i,\infty}$. And let
	$S_I=\bigoplus_{i\in I}S_i/\{x_{i,\infty}\mid i\in I\}$ be the quotient space of the topological
	sum $\bigoplus_{i\in I}S_i$ by collapsing  all the limit points to
	 a single point.
	We call this space a {\bf sequence hyperfan}\footnote{As is well known, when $I$ is countable, $S_I$ is called the {\bf sequence fan}. We use the term {\bf sequence hyperfan} for arbitrary $I$.}. Then $S_I$ is a space with the unique non-isolated point.
Using Theorems \ref{CT3-imp-second} and \ref{CT_2notCT3}, we have the following examples:

\begin{ex}\label{ex-one-pt-com-uncountable-discrete} For every infinite discrete space $D$ and for every $p\in \beta D\setminus D$,  the space $D\cup\{p\}$ is $CT_2$. However,  there exists
$p\in \beta D\setminus D$ such that the space $D\cup\{p\}$ is  not $CT_3$. And every sequence hyperfan is $CT_2$ but not $CT_3$.
\end{ex}

\begin{proof}  For every $p\in \beta D\setminus D$, there exists a subset $D'$ of $D$ such that $p\in \cl D'$ and $p\not\in \cl E$ for each $E\subset D'$ with $|E|<|D'|$. Define a well-order $\leq $ on $D$ such that $d<d'$ if $d\in D\sm D'$ and $d'\in D'$, and the order-type of $(D',\leq)$ is $|D'|$.
Then, for each bounded set $A$ in $D$, there exists $d'_0\in D'$ such that $A\subseteq (D\sm D')\cup \{d'\in D'\mid d��<d'_0\}$. And
$p\not \in \cl \left((D\sm D')\cup \{d'\in D'\mid d��<d'_0\}\right)$, we have that $p\not\in A$. That is, $D\cup\{p\}$
satisfies Condition (\ref{equ-711}) in Theorem \ref{CT_2notCT3}. Hence, $D\cup\{p\}$ is $CT_2$.

Using Theorem \ref{CT3-imp-second} and Lemma \ref{Stone-compactification}, there exists
$p\in \beta D\setminus D$ such that the space $D\cup\{p\}$ is  not $CT_3$.

To show that $S_I$ is $CT_2$, let $D=\{x_{i,j}\mid i\in I,j=1,2,\cdots\}.$ Then $D$ is discrete.  For every well-order $\leq $ on $I$, we can define a
well-order $\leq $ on $D$ as
\[ x_{i,j}\leq x_{i',j'}\mbox{ if and only if either $j<j'$, or $j=j'$ and $i\leq i'$. } \]
Hence, $S_I$ satisfies Condition (\ref{equ-711}) in Theorem \ref{CT_2notCT3}. It follows from Theorem \ref{CT_2notCT3} that $S_I$ is $CT_2$.

Now we show that $w(S_I)>|I|$.

Let $p$ be the unique non‑isolated point of $S_I$. Suppose for contradiction that $w(S_I)\le |I|$. Since the weight bounds the minimal cardinality of a local base at any point, there exists a local base $\mathcal{B}=\{B_\alpha\mid \alpha<\kappa\}$ at $p$ with $\kappa\le |I|$.

For each $\alpha<\kappa$ and each $i\in I$, since $B_\alpha$ is a neighbourhood of $p$, by the quotient topology of $S_I$, $B_\alpha$ contains all but finitely many points of the $i$-th sequence. Thus we may choose $n(i,\alpha)\in\mathbb{N}$ such that $x_{i,j}\in B_\alpha$ whenever $j\ge n(i,\alpha)$.

Fix an injection $\varphi\colon \kappa\to I$, and write $i_\alpha=\varphi(\alpha)$ for each $\alpha<\kappa$. Set $k_\alpha = n(i_\alpha,\alpha)$, and let $A=\{x_{i_\alpha,k_\alpha}\mid \alpha<\kappa\}$. Define $U=S_I\setminus A$.

For every $i\in I$, $U$ removes at most one point from the $i$-th sequence, so $U$ intersects each sequence in a co‑finite subset. By the quotient topology, $U$ is an open neighbourhood of $p$.

Now take any $B_\alpha\in\mathcal{B}$. We have $x_{i_\alpha,k_\alpha}\in B_\alpha$ by the choice of $k_\alpha$, while $x_{i_\alpha,k_\alpha}\notin U$. Hence $B_\alpha\not\subseteq U$, which contradicts that $\mathcal{B}$ is a local base at $p$.

Therefore $w(S_I)>|I|$.

It is straightforward to verify $d(S_I)=|I|$, so $w(S_I)>d(S_I)$.
	
By Theorem \ref{CT3-imp-second}, $S_I$ is not  $CT_3$.
\end{proof}

We give a property of $CT_3$-spaces which allows us to obtain a class of $CT_2$-spaces which are not $CT_3$.

Let us recall that a Hausdorff space $X$ is called a  {\bf Fr\'{e}chet-Urysohn\ space} (which is called a {\bf Fr\'{e}chet\ space} in \cite{Engelking-1989}) if for each set $A$ in $X$,  every accumulation point $x$ of $A$ is the limit  of  a  sequence in $A$. A Hausdorff topological space is called a {\bf  strict Fr\'{e}chet-Urysohn space}  if
for each sequence $(A_n)$ of sets in $X$ such that $x$ is an accumulation point of each $A_n$, there exists a sequence $(x_n)$ in $X$
such that $x_n\in A_n$ and $x_n\to x$.

\begin{thm}\label{thm-unique-non-iso-2}
Every $CT_3$-space is  a strict Fr\'{e}chet-Urysohn\ space.
\end{thm}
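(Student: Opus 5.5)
The plan is to reduce the statement to the metrizable case via Theorem \ref{CT3-imp-second}, after first proving that a $CT_3$-space has countable tightness.

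Fix a $CT_3$-map $\phi$ for $X$, a point $x\in X$, and sets $A_n$ with $x\in\cl A_n\setminus\{x\}$ for every $n$. Suppose I have found countable sets $C_n\subseteq A_n$ with $x\in\cl C_n$. Then $Y=\{x\}\cup\bigcup_n C_n$ is a separable subspace of $X$, so it is metrizable by Theorem \ref{CT3-imp-second}. Fix a metric on $Y$. For each $n$ I pick $x_n\in C_n$ with distance less than $\tfrac1n$ from $x$. Then $x_n\in A_n$ and $x_n\to x$ in $Y$, hence in $X$, which is exactly the strict Fr\'echet-Urysohn property. (The Hausdorff requirement is automatic, since $CT_3$ implies $T_3$.) So everything reduces to the following claim: in a $CT_3$-space, if $x\in\cl A$, then $x\in\cl C$ for some countable $C\subseteq A$.

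To prove the countable tightness claim I would argue by contradiction, with a transfinite construction fed into the continuity of $\phi$ in its second coordinate. Assume $x\in\cl A\setminus A$ but $x\notin\cl C$ for every countable $C\subseteq A$. Recursively choose points $a_\alpha\in A$ for $\alpha<\omega_1$ and set $C_\alpha=\cl\{a_\beta\mid\beta<\alpha\}$. Each $C_\alpha$ is the closure of a countable subset of $A$, so $x\notin C_\alpha$ and $(x,C_\alpha)\in\mathcal T_3(X)$.

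For such a pair, put $V_\alpha=X\setminus\phi_1(x,C_\alpha)$ and $G_\alpha=X\setminus\phi_2(x,C_\alpha)$. These are disjoint open sets with $x\in V_\alpha$ and $C_\alpha\subseteq G_\alpha$, because $\phi_1\cup\phi_2=X$. Since $x\in\cl A$, I can choose the next point $a_\alpha\in A\cap V_\alpha$; this point lies outside $G_\alpha$. So the construction is arranged so that at each stage the new point lands in the separating neighbourhood of $x$ and escapes $G_\alpha$.

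Now take a limit ordinal $\lambda<\omega_1$, for instance $\lambda=\omega$. An increasing sequence of closed sets converges in the Vietoris topology to the closure of its union. (For the $U^-$ condition, any open set meeting the closure of the union meets some term; for the $U^+$ condition, each term lies inside the closure of the union.) Hence $C_\beta\to C_\lambda$ as $\beta\to\lambda$. Continuity of $\phi$ at $(x,C_\lambda)$, tested against the open set $G_\lambda^-$ and its relatives, should force the separations at stages $\beta<\lambda$ to eventually agree with the separation at stage $\lambda$. This should contradict the fact that the points $a_\beta$ chosen for $\beta<\lambda$ lie in $V_\beta$ but inside $C_\lambda\subseteq G_\lambda$.

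The main obstacle is this final step: the Vietoris topology only gives the inclusions $\phi_j(x,C_\beta)\subseteq U$ for open $U\supseteq\phi_j(x,C_\lambda)$, together with "meets $U$" conditions. It is not clear that these alone pin down where the individual points $a_\beta$ sit. If the limit argument does not close, my fallback is to imitate the finite-set net argument of Theorem \ref{non-T4} instead. That argument uses the net of finite subsets $F\subseteq A$ converging to $\cl A$, and here the limit pair $(x,\cl A)$ is not in $\mathcal T_3(X)$ because $x\in\cl A$. So I would apply it to the pairs $(x,\cl A\setminus U)$ for open neighbourhoods $U$ of $x$, and use density of the countable-data pairs, as in the proof of Theorem \ref{CT3-imp-second}, to extract a countable $C$.
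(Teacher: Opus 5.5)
Your reduction is correct and matches the paper's own structure. Once every point of $\cl A$ lies in the closure of a countable subset of $A$, the countable subspace $\{x\}\cup\bigcup_n C_n$ is metrizable by Theorem \ref{CT3-imp-second}, and a diagonal choice gives the sequence. The gap is that this countable-tightness claim, which is the whole content of the theorem beyond Theorem \ref{CT3-imp-second}, is never proved. Your limit step is only asserted to ``should'' give a contradiction, and the test set you name carries no information. Since $\phi_2(x,C_\beta)\cap C_\beta=\emptyset$ and $\phi_1\cup\phi_2=X$, we get $C_\beta\subseteq\phi_1(x,C_\beta)$. Also $C_\beta\subseteq C_\lambda\subseteq G_\lambda$. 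So $\phi_1(x,C_\beta)\in G_\lambda^-$ holds automatically for every $\beta$, while $\phi_2(x,C_\lambda)\notin G_\lambda^-$. The fallback via nets of finite sets is too unspecified to evaluate.

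Your construction does in fact close. Your doubt that upper-Vietoris inclusions cannot locate the points $a_\beta$ is misplaced: test the \emph{second} coordinate against the open set $(X\setminus C_\lambda)^+$.
\begin{itemize}
\item Since $\phi_2(x,C_\lambda)\cap C_\lambda=\emptyset$, we have $\phi_2(x,C_\lambda)\in(X\setminus C_\lambda)^+$.
\item Continuity and $C_\beta\to C_\lambda$ then give $\phi_2(x,C_\beta)\cap C_\lambda=\emptyset$ for all $\beta$ close to $\lambda$.
\item But $a_\beta\in V_\beta=X\setminus\phi_1(x,C_\beta)\subseteq\phi_2(x,C_\beta)$ and $a_\beta\in C_{\beta+1}\subseteq C_\lambda$, a contradiction.
\end{itemize}

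This already happens at $\lambda=\omega$, so you need neither the recursion to $\omega_1$ nor the global contradiction hypothesis. Also, $C_0=\cl\emptyset$ is not in $\Cld(X)$, so start from an arbitrary $a_0\in A$. The streamlined argument runs as follows. For $x\in\cl A\setminus A$, put $C_n=\{a_0,\dots,a_{n-1}\}$ and choose $a_n\in A\setminus\phi_1(x,C_n)$. If $x\notin K=\cl\{a_n\mid n\in\N\}$, then $(x,C_n)\to(x,K)$ in $\mathcal{T}_3(X)$, and the argument above yields a contradiction; hence $x\in K$.

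With this step supplied, your proof is complete and much shorter than the paper's. The paper isolates the same continuity fact (its finite sets $F_C$ in (\ref{equ-313.5})). It then needs the combinatorial Claims 1--3, with the families $\mathcal{C}_F$ and $\mathcal{D}_F$, to produce the increasing finite sets $F_n$.
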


\begin{proof}
Let $\phi:\mathcal{T}_3(X)\to \Cld(X)^2$ be a $CT_3$-map for $X$.

We first show that $X$ is a Fr\'{e}chet-Urysohn space.

Assume $Y\subseteq X\sm\{x_0\}$ and $x_0\in\cl Y$. Let
\[
\begin{split}
\mathcal{C}=\{\0\not=C\subseteq Y\mid x_0\not\in \cl C\},\\
\mathcal{F}=\{F\in\mathcal{C}\mid F\mbox{ is finite}\}.
\end{split}
\]
Then for each $C\in\mathcal{C}$, $\phi_2(x_0,\cl C)\cap \cl C=\emptyset$. Hence, there exists $F_C\in\mathcal{F}$ such that $F_C\subseteq C$ and
\begin{equation}\label{equ-313.5}
\phi_2(x_0,A)\cap \cl C=\emptyset,~\mbox{ for }~A\in\Cld(X) \mbox{ with }~F_C\subseteq A\subseteq \cl C.
\end{equation}

Now, for each $F\in\mathcal{F}$, let
\[
\mathcal{C}_F=\{C\in \mathcal{C}\mid F_C\subseteq F\}\ni F.
\]
We show the following \vspace*{3mm} claim:\\
{\bf Claim 1.} There exists no $F_0\in \mathcal{F}$ such that, for each $A\in \mathcal{F}$, there exists $C\in \mathcal{C}_{F_0}$ such that $A\subseteq C$.

 Otherwise, assume that $F_0\in \mathcal{F}$ satisfies the requirement and let
\[
\mathcal{C}_{F_0}^0=\{C\in \mathcal{C}_{F_0}\mid C\supseteq F_0\}.
\]
Then,
\begin{equation}\label{equ-203.5}
\bigcup\mathcal{C}_{F_0}^0=Y.
\end{equation}
Indeed, for each $y\in Y$, by definition of $\mathcal{F}$,  $\{y\}\cup F_0\in\mathcal{F}$. Thus, there exists $C\in\mathcal{C}_{F_0}$ such that $\{y\}\cup F_0\subseteq C$ and hence $C\in\mathcal{C}_{F_0}^0$. Hence, $y\in \bigcup\mathcal{C}_{F_0}^0$.

  Moreover, for each $C\in \mathcal{C}_{F_0}^0$, by Equation (\ref{equ-313.5}),
\[
\phi_2(x_0,F_0)\cap\cl C=\emptyset.
\]
It follows from Equation (\ref{equ-203.5}) that $\phi_1(x_0,F_0)\supseteq Y$. By the definition of  $CT_3$-map and $x_0\in\cl Y$,
	we obtain $x_0\in \phi_1(x_0,F_0)$, a contradiction. This completes the proof of Claim 1.

For each $F\in\mathcal{F}$, let
\[
\mathcal{D}_{F}=\{D\in\mathcal{C}\mid D\not\subseteq C\mbox{ for each }C\in \mathcal{C}_F\}.
\]
It follows from Claim 1 that $\mathcal{D}_{F}$ has the \vspace*{3mm} following properties:\\
{\bf Claim 2.}
\begin{enumerate}
\item [(1)]\label{convex1.5}$\mathcal{C}_F\cap \mathcal{D}_{F}=\emptyset.$
\item [(2)]\label{convex2} $\mathcal{D}_{F}\cap \mathcal{F}\not=\emptyset$.
\item [(3)]\label{convex2.5}  $\mathcal{D}_{F}\supseteq\mathcal{D}_{F'}$ if $\0\not=F\subseteq F'\in\mathcal{F}$.
\item [(4)]\label{convex2.6} $\mathcal{C}\ni D'\supseteq D\in \mathcal{D}_{F}$ implies that $D'\in \mathcal{D}_{F}$.
\end{enumerate}
{\bf Claim 3.} Let
\[
\mathcal{D}=\{D\in\mathcal{C}\mid D\in\mathcal{D}_F \mbox{ for each } F\in \mathcal{F} \mbox{ with } F\subseteq D\}.
\]
Then
\begin{equation}\label{equ-210.5}
\mathcal{D}=\emptyset.
\end{equation}

Otherwise, assume that $D\in \mathcal{D}$. Then, $F_D\in\mathcal{F}$ and $F_D\subseteq D$. It follows that $D\in\mathcal{D}_{F_D}$.
However, by the definition of $\mathcal{C}_{F_D}$, we have that $D\in\mathcal{C}_{F_D}$. Hence, $D\in \mathcal{C}_{F_D}\cap \mathcal{D}_{F_D}$, which
contradicts (1) in Claim 2.

Choose $F_1\in \mathcal{F}$. Then, by (2) in Claim 2, choose $E_1\in \mathcal{D}_{F_1}\cap \mathcal{F}$ and let $F_2=E_1\cup F_1$.
It follows from (4) in Claim 2 that $F_2\in \mathcal{D}_{F_1}\cap \mathcal{F}$.
Moreover, choose  $E_2\in \mathcal{D}_{F_2}\cap \mathcal{F}$ and let $F_3=E_2\cup F_2$. Proceeding in this way, we can define a sequence $(F_1\subseteq F_2\subseteq\cdots)$ in $\mathcal{F}$ satisfying the condition  that
\begin{equation}\label{equ-215.5}
F_{n+1}\in \mathcal{D}_{F_{n}}\sm \mathcal{D}_{F_{n+1}},~~n=1,2,\cdots.
\end{equation}
Let $A=\bigcup_{n=1}^\infty F_n$. Then, $A$ is a countable set in $Y$. Moreover, for each $F\in \mathcal{F}$ with $ F\subseteq A$, there exists $n$ such that $F\subseteq F_n\subseteq F_{n+1} \subseteq A$. It follows from Equation (\ref{equ-215.5}) and (3) in Claim 2 that
\[
A\supseteq F_{n+1}\in \mathcal{D}_{F_n} \subseteq \mathcal{D}_{F}.
\]
Using the definition of $\mathcal{D}$ and (4) in Claim 2, we have $A\in\mathcal{D}$ unless $A\not\in  \mathcal{C}$. By Equation (\ref{equ-210.5}), we must  have that $A\not\in  \mathcal{C}$,
 that is, $x_0\in\cl A$.
Thus, by Theorem \ref{CT3-imp-second}, $\cl A$ is a metrizable subspace of $X$ and $A\subseteq Y$. Thus, there exists a sequence $(y_n)$ in $A\subseteq Y$ such that $y_n\to x_0$.

Second, using the above statement, we show that $X$ is a strict Fr\'{e}chet-Urysohn space. Assume that $(A_n)$ is a  sequence of sets in $X\sm\{x_0\}$ with $x_0$ being an accumulating point of each $A_n$. Then, by the first statement, for each $n$, there exists a countable subset $C_n$ of $A_n$ such that $x_0\in\cl C_n$. Let
\[
C=\bigcup_{n=1}^\infty C_n.
\]
Then $C\cup \{x_0\}$ is countable. It follows from Theorem \ref{CT3-imp-second} that $C\cup \{x_0\}$ is metrizable. Thus, there exists a sequence  $(x_n)$ in $C$
such that $x_n\in C_n\subseteq A_n$ and $x_n\to x_0$. We are done.
\end{proof}

Recall that {\bf cofinality} $\cf (\alpha)$ of an ordinal $\alpha$ is defined by
\[\cf (\alpha)=\min\{\gamma\mid \mbox{ there exists } \{\xi_\eta\mid \eta<\gamma\}\subseteq\alpha
\mbox{ such that }\sup\xi_\eta=\alpha\}.\]

For each infinite ordinal $\alpha$, let $\alpha^+=\alpha\cup\{\alpha\}$ \footnote{To distinguish this space from the standard successor ordinal space, we denote this space by $\alpha^+$ and reserve $\alpha+1$ for the standard ordinal space.} with the topology defined by
\[
\mbox{a set $A$ in $\alpha^+$ is closed if either $A\ni \alpha$ or $A$ is bounded in $\alpha$.}
\]
Then $\alpha^+$ is $CT_2$, by Theorem \ref{CT_2notCT3}. Moreover, we have the following theorem:

\begin{thm} For an infinite ordinal $\alpha$,   $\alpha^+$ is $CT_3$ if and only if it is metrizable if and only if $\cf (\alpha)$ is countable.
\end{thm}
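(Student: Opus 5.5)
We prove the cycle: $CT_3 \Rightarrow$ $\cf(\alpha)$ countable $\Rightarrow$ metrizable $\Rightarrow CT_3$. The space $\alpha^+$ has $\alpha$ as its unique non-isolated point. Indeed, each $\beta<\alpha$ is isolated, since $\alpha^+\setminus\{\beta\}$ contains $\alpha$ and is therefore closed. A set $U\ni\alpha$ is open iff its complement is bounded in $\alpha$, so the neighbourhoods of $\alpha$ are exactly the sets containing a tail $(\gamma,\alpha]$. Since $\alpha$ is a limit ordinal, every tail is infinite and $\alpha$ is genuinely non-isolated.

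\emph{Metrizable $\Rightarrow CT_3$.} This is immediate from Theorem \ref{metric+one-noniso-be-CT3}, because a metrizable $\alpha^+$ is a metric space with the unique non-isolated point $\alpha$.

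\emph{$\cf(\alpha)$ countable $\Rightarrow$ metrizable.} Choose a strictly increasing cofinal sequence $\xi_0<\xi_1<\cdots$ in $\alpha$ (if $\cf(\alpha)$ is countable it equals $\omega$, as $\alpha$ is a limit ordinal). Define $d(\beta,\beta')=|1/(n+1)-1/(n'+1)|+\tfrac{1}{n+1}+\tfrac{1}{n'+1}$ for distinct $\beta,\beta'<\alpha$ lying in the blocks $[\xi_{n-1},\xi_n)$ and $[\xi_{n'-1},\xi_{n'})$ respectively, and $d(\beta,\alpha)=\tfrac{1}{n+1}$. A simpler route is to avoid the explicit metric: the sets $(\xi_n,\alpha]$ form a countable local base at $\alpha$, every other point is isolated, and $\alpha^+$ is regular (indeed $T_4$: given disjoint closed sets, one of them is bounded and hence clopen). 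By Urysohn metrization, a regular space with a countable base is metrizable. A countable base exists when $\alpha$ itself is countable; in general, $\alpha^+$ is homeomorphic to a topological sum of discrete blocks together with a convergent-sequence-like structure, and one shows it is a metric hedgehog-type space. Concretely, the metric $d(\beta,\beta')=\max\{1/(n+1),1/(n'+1)\}$ for $\beta\neq\beta'$ (with $n$ the block index) and $d(\beta,\alpha)=1/(n+1)$ is an ultrametric whose balls around $\alpha$ are exactly the tails $[\xi_n,\alpha]$, while balls of small radius around $\beta<\alpha$ are singletons. The ultrametric inequality is routine to check, so this metric generates the topology.

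\emph{$CT_3\Rightarrow \cf(\alpha)$ countable.} This is the main step. Suppose $\alpha^+$ is $CT_3$. By Theorem \ref{thm-unique-non-iso-2}, $\alpha^+$ is Fr\'echet–Urysohn. Since $\alpha\in\cl\alpha$, there is a sequence $(\beta_n)$ in $\alpha$ converging to $\alpha$. Convergence means that every tail $(\gamma,\alpha]$ contains almost all $\beta_n$; equivalently, $\{\beta_n\}$ is unbounded in $\alpha$, for otherwise the set $\{\beta_n\}$ is bounded, hence closed and missing $\alpha$, so its complement is a neighbourhood of $\alpha$ containing no $\beta_n$. Thus $\sup_n\beta_n=\alpha$, and so $\cf(\alpha)\le\omega$.

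The only delicate point is making sure the metric in the middle step really induces the given topology. The ultrametric choice makes that transparent, which is why we use it.
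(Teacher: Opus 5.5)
Your proof is correct and follows the paper's route. You use Theorem \ref{metric+one-noniso-be-CT3} for metrizable $\Rightarrow CT_3$ and Theorem \ref{thm-unique-non-iso-2} for $CT_3\Rightarrow$ Fr\'echet--Urysohn $\Rightarrow \cf(\alpha)\le\omega$, while the step $\cf(\alpha)\le\omega\Rightarrow$ metrizable, which the paper just calls trivial, is made explicit by your ultrametric $d(\beta,\beta')=\max\{\tfrac{1}{n+1},\tfrac{1}{n'+1}\}$, $d(\beta,\alpha)=\tfrac{1}{n+1}$.

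When you write it up, make three changes. Delete the first displayed metric: it equals $2\max\{\tfrac{1}{n+1},\tfrac{1}{n'+1}\}$, so it violates the triangle inequality through $\alpha$ for points in different blocks. Delete the Urysohn-metrization detour, since second countability fails, e.g., for $\alpha=\omega_\omega$. Finally, state that you, like the paper, take $\alpha$ to be a limit ordinal; for successor $\alpha$ every subset of $\alpha$ is bounded, so $\alpha^+$ is discrete and all three conditions hold trivially.
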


\begin{proof} Trivially, $\alpha^+$ is first countable if and only if $\alpha^+$ is metrizable if and only if  $\cf (\alpha)$ is countable.
 Observe that $\alpha^+$ has exactly one non-isolated point.  Hence, if $\cf(\alpha)$ is countable, that is, if $\alpha^+$ is metrizable, then by Theorem \ref{metric+one-noniso-be-CT3}, $\alpha^+$ is $CT_3$. On the other hand, if $\alpha^+$ is $CT_3$, then by Theorem \ref{thm-unique-non-iso-2},  $\alpha^+$ is a Fr\'{e}chet-Urysohn\ space, which implies that $\cf(\alpha)$ is countable.
 \end{proof}

\begin{cor} For an infinite ordinal $\alpha$, $\alpha^+$ is  $CT_2$  but not $CT_3$  if $\cf (\alpha)$ is not countable.
\end{cor}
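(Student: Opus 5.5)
The corollary follows by combining the two facts just established about $\alpha^+$. The plan is to invoke them directly rather than build anything new.

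First, $\alpha^+$ is $CT_2$ for every infinite ordinal $\alpha$, by Theorem \ref{CT_2notCT3}. To check its hypothesis I take $D=\alpha$, which is discrete in $\alpha^+$. Indeed, for $\beta<\alpha$ the set $\alpha^+\setminus\{\beta\}$ is open, since its complement $\{\beta\}$ is bounded in $\alpha$ and hence closed. I take $x=\alpha$ and the usual ordinal order on $D$. A subset of $D$ that is upper-bounded in $(D,\leq)$ is bounded in $\alpha$, so it is closed in $\alpha^+$ by the definition of the topology. Thus Condition (\ref{equ-711}) holds, and Theorem \ref{CT_2notCT3} gives that $\alpha^+$ is $CT_2$.

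Second, assume $\cf(\alpha)$ is uncountable. By the preceding theorem, $\alpha^+$ is $CT_3$ if and only if $\cf(\alpha)$ is countable, so $\alpha^+$ is not $CT_3$.

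If one wants the second step self-contained, the only nontrivial direction is "$CT_3\Rightarrow\cf(\alpha)$ countable". This goes through Theorem \ref{thm-unique-non-iso-2}: a $CT_3$-space is Fr\'{e}chet-Urysohn. The point $\alpha$ lies in $\cl(\alpha)$, because $\alpha$ itself is unbounded in $\alpha$ and hence not closed, so its closure must contain $\alpha$. So some sequence $(\beta_n)$ in $\alpha$ converges to $\alpha$. Its range must be unbounded in $\alpha$: otherwise the range is bounded, hence closed and not containing $\alpha$, and its complement is a neighbourhood of $\alpha$ missing the whole sequence. An unbounded countable subset of $\alpha$ forces $\cf(\alpha)\le\omega$, contradicting the assumption. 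I expect no real obstacle here; the only care needed is this last closedness argument, which is already contained in the proof of the preceding theorem.
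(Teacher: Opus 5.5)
Your proposal is correct and follows the paper's route: $CT_2$ comes from Theorem \ref{CT_2notCT3}, as the paper notes just before the preceding theorem, and non-$CT_3$ comes from that preceding theorem, whose nontrivial direction is exactly your Fr\'{e}chet--Urysohn argument via Theorem \ref{thm-unique-non-iso-2}. There is one small slip in your check that $D=\alpha$ is discrete: your argument shows that $\{\beta\}$ is closed, not that it is open. The correct reason is that $\alpha^+\setminus\{\beta\}$ contains the point $\alpha$ and is therefore closed, so $\{\beta\}$ is open.
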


Moreover, we have the following example:

\begin{ex} The Sorgenfrey line  $\R_l$ is $CT_2$  but not $CT_3$.
\end{ex}

\begin{proof} \leavevmode We use the upper-limit Sorgenfrey topology on $\mathbb R$, generated by half-open intervals $(a,b]$.
 It is well known that $\R_l$ is separable but not metrizable. Hence, by Theorem \ref{CT3-imp-second}, $\R_l$ is not $CT_3$.

 Define
$l,r:\R_l\to \Cld(\R_l)$ as follows
\[ l(x)=(-\infty,x],~~r(x)=[x,+\infty).\]
We show that they are continuous. Since $\R_l$ is first-countable, to this end, assume that $x_n\to x$ in $\R_l$ with $x_n\leq x_{n+1}\leq x$ for each $n$; we verify that
$l(x_n)\to l(x)$ and $r(x_n)\to r(x)$.
Indeed, $\{l(x_n)\}$ is increasing and hence
\[ \lim_{n\to\infty}l(x_n)=\cl \left(\bigcup_{n\in\N}l(x_n)\right)=l(x).\]
To show $r(x_n)\to r(x)$, suppose that $r(x)\in U^+$ for some open set $U$ in $\R_l$, there exists $(a,b]$ such that $x\in (a,b]\subseteq U$.
Hence, there exists $N\in\N$ such that $x_n\in (a,b]$ for $n>N$. It follows that
\[ r(x_n)=[x_n,x]\cup r(x)\subseteq U.\]
That is, $r(x_n)\in U^+$ for $n>N$. Since $\{r(x_n)\}$ is decreasing, we have that $r(x_n)\to r(x)$.

Note that $\mathcal T_2(\mathbb R_l)=\{(x,y)\in\mathbb R_l^2:x\neq y\}$, and the subsets $\{(x,y):x<y\}$ and $\{(x,y):x>y\}$ are clopen in $\mathbb R_l^2\setminus\Delta(\mathbb R_l)$.
Now, for each $(x,y)$ in $\R_l^2$ with $x<y$, define
\[ \phi(x,y)=\left([\tfrac{x+y}{2},+\infty), (-\infty,\tfrac{x+y}{2}]\right).\]
Similarly, for $(x,y)$ in $\R_l^2$ with $x>y$, define
	\[ \phi(x,y)=\left((-\infty,\tfrac{x+y}{2}],[\tfrac{x+y}{2},+\infty)\right).\]
It is well-known that $(x,y)\mapsto \tfrac{x+y}{2}$ is continuous from $\R_l^2$ to $\R_l$. It follows that $\phi$ is continuous.
Trivially, it also satisfies the other requirement. Hence, $\R_l$ is $CT_2$.
\end{proof}

\section{Examples of  $CT_1$-spaces which are not  $CT_2$}

\begin{ex} \label{T_1-not-T_2} Each infinite set $X$ endowed with the co-finite topology $\mathcal{T}_f$ is  $CT_1$ but not  $CT_2$.
\end{ex}
\begin{proof}
	It is well known that $(X, \mathcal{T}_f)$ is $T_1$ but not $T_2$; therefore, it is not  $CT_2$ either and, by Theorem \ref{T_1-is-CT1}, it is  $CT_1$. We are done.
\end{proof}

The following examples are more interesting.
\begin{ex}\label{one-point} Let $\alpha D$ be the one-point compactification of an uncountable discrete space $D$. Then,  $\alpha D$
is $CT_1$, $T_2$ and compact but not $CT_2$.
\end{ex}

\begin{proof}
By Theorem \ref{T_1-is-CT1}, we only need to verify that $\alpha D$
is  not $CT_2$.

Let $\alpha D=D\cup\{x\}$. Suppose that $\alpha D$ is $CT_2$, and let $\phi:\mathcal{T}_2(\alpha D)\to \Cld(\alpha D)^2$ be a $CT_2$-map.
Then, for each $d\in D$, there exist two finite sets $A_d$ and $B_d$ such that
\[ \phi(d,x)=(\alpha D\setminus A_d, B_d)\mbox{ and } x\not\in B_d\supseteq A_d\ni d.\]
Since $\phi:\mathcal{T}_2(\alpha D)\to \Cld(\alpha D)^2$ is continuous at $(d,x)$ and $B_d$ is an isolated point in $\Cld(\alpha D)$, there exists a finite set $F_d\subset D$, such that
$\phi_2(d,d')=B_d$ for $d'\in D\setminus F_d$. Thus, $\phi_1(d,d')$ is co-finite for each $d'\in D\setminus F_d$, therefore, $\phi_1(d,d')\ni x$.

Choose a sequence $(d_n)$ consisting of distinct elements from $D$. Then  $F=\bigcup_{n\in\N} F_{d_n}$ is countable.
Hence, there exists $d'\in D\setminus F$.  Then, $\phi_1(d_n,d')\ni x$ for each $n\in\N$. It follows from $d_n\to x$ that  \[ \phi_1(x,d')=\lim_{n\to\infty}\phi_1(d_n,d').\]
Hence, $\phi_1(x,d')\ni x$. This yields a contradiction.
\end{proof}

As usual, we may think of every ordinal as the set of all ordinals less than it.
Moreover,
each ordinal carries the usual order topology. In Example \ref{omega1}, we will show that no uncountable ordinal space is $CT_2$.


\section{Ordinal spaces}\label{ord-sp}

First, we have the following lemma.

 \begin{lem}\label{max-continuous} For each ordinal $\alpha$, the maps  $A\mapsto \sup A$ and $A\mapsto \min A$ are continuous from $\Cld(\alpha)$ to $\alpha+1$.
	\end{lem}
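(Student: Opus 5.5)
Since $\alpha+1$ carries the order topology, it is enough to check preimages of the subbasic open sets $[0,\beta)=\{\gamma\le\alpha\mid\gamma<\beta\}$ and $(\beta,\alpha]=\{\gamma\le\alpha\mid \gamma>\beta\}$, for $\beta\le\alpha$. Two preliminary observations will be used throughout. First, each $A\in\Cld(\alpha)$ is non-empty and well-ordered, so $\min A$ exists and lies in $A$. Second, $\sup A\le\alpha$. If $A$ is bounded in $\alpha$ then $\sup A\in A$ by closedness: when $\sup A$ is a limit of elements of $A$ and $\sup A<\alpha$, it belongs to $\cl A=A$. If $A$ is unbounded, then $\sup A=\alpha$. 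So $\sup A=\max A$ whenever $\sup A<\alpha$.

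For $\min$ we compute the two preimages directly. We have $\min A<\beta$ iff $A\cap[0,\beta)\neq\emptyset$, i.e.\ iff $A\in [0,\beta)^-$, and $[0,\beta)$ is open in $\alpha$. Likewise $\min A>\beta$ iff $A\subseteq(\beta,\alpha)$, i.e.\ iff $A\in(\beta,\alpha)^+$, and $(\beta,\alpha)=\alpha\setminus[0,\beta]$ is open. Both preimages are Vietoris open, so $\min$ is continuous.

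For $\sup$, first take the set $\sup A>\beta$ with $\beta<\alpha$. This holds iff $A$ contains some element $>\beta$: if $\sup A>\beta$, some element of $A$ exceeds $\beta$ by the definition of supremum. So this preimage is $(\beta,\alpha)^-$, which is open. Next take $\sup A<\beta$ with $\beta\le\alpha$. Here I would show that this holds iff $A\subseteq\gamma$ for some $\gamma<\beta$, which makes the preimage $\bigcup_{\gamma<\beta}[0,\gamma]^+$. The sets $[0,\gamma]=[0,\gamma+1)$ are clopen in $\alpha$, so this union is open. The forward direction uses the observation above: if $\sup A<\beta\le\alpha$, then $\gamma=\sup A=\max A<\beta$ works. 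The converse is immediate.

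The only delicate point is this last case, where $\beta$ is a limit ordinal or $\beta=\alpha$. One cannot use $[0,\beta)^+$ there, because a set $A\subseteq[0,\beta)$ that is cofinal in $\beta$ has supremum exactly $\beta$. That is why I pass through the attained maximum and the clopen initial segments $[0,\gamma]$. This step relies on $A$ being closed, so that a bounded $A$ attains its supremum.
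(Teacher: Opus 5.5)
Your proof is correct and follows essentially the same route as the paper: both compute the preimages of the subbasic sets $[0,\beta)$ and $(\beta,\alpha]$ as Vietoris-open sets, namely $(\beta,\alpha)^-$ and a union of $[0,\xi)^+$ over $\xi<\beta$; your single union $\bigcup_{\gamma<\beta}[0,\gamma]^+$ merges the paper's limit/successor split, and your explicit treatment of $\min$ fills in what the paper leaves to ``the same method.'' One small remark: the forward inclusion does not need closedness or an attained maximum, since $A\subseteq[0,\sup A]$ holds for every $A$, so $\gamma=\sup A$ works regardless.
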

	\begin{proof}
		Let $g:\Cld(\alpha) \to \alpha+1$ be defined by $A\mapsto \sup A$ for any $A\in \Cld(\alpha)$.
		We now prove that the preimages of the two types of elements in the standard subbase of $\alpha+1$ are open sets in $\Cld(\alpha)$.
		
		(1) Let $W = [0,\beta)$ with $\beta \leq \alpha+1$. If $\beta$ is a limit ordinal, then
		\begin{gather*}
		g^{-1}(W)
		= \{ A \in \operatorname{Cld}(\alpha) \mid \sup A < \beta \}\\
		= \bigcup_{ \xi<\beta}\{ A \mid A \subseteq [0,\xi)\}
		= \bigcup_{\xi<\beta}[0,\xi)^+.
\end{gather*}
		And if $\beta=\gamma+1$ is a successor ordinal, then
\[ g^{-1}(W)= \{ A \in \operatorname{Cld}(\alpha) \mid \sup A < \beta \}= \{ A \in \operatorname{Cld}(\alpha) \mid \sup A \leq \gamma \}=[0,\beta)^+.\]
				(2) Let $W = (\beta,\alpha]$ with $\beta<\alpha$. Then
		\[
		g^{-1}(W)
		= \{ A \in \operatorname{Cld}(\alpha) \mid \sup A > \beta \}
		= \{ A \mid A \cap (\beta,\alpha] \neq \varnothing \}
		= (\beta,\alpha]^-.
		\]
		Thus $g$ is continuous.
	
Using the same method, we can show that $A\mapsto \min A$ is also continuous from $\Cld(\alpha)$ to $\alpha$.
\end{proof}

 For an ordinal $\alpha$, let
\[ \mathcal{T}_2^d(\alpha)=\{(x,y)\in \mathcal{T}_2(\alpha)\mid x<y\}.\]
A set $A$ in $\alpha$ is an {\bf upper} (a {\bf lower}) {\bf set} if $x\in A$ and $y>x$ ($y<x$) imply that $y\in A$.
Trivially, a closed set $A$ is an upper (resp. lower) set if and only if there exists $x\in\alpha$ such that $A=[x,\alpha)$
(resp., either $A=\alpha$ or there exists $x\in\alpha$ such that $A=[0,x]$).

\begin{lem}\label{continuity-2}
Let $\alpha$ be an ordinal. Define $I:\mathcal{T}_2^d(\alpha)\cup \triangle(\alpha)\to \Cld(\alpha)$ by
\[ I(x,y)=[x,y].\]
Then, $I$ is continuous. In particular, $x\mapsto [0,x]$ and $x\mapsto [x,\alpha)$ are continuous.
\end{lem}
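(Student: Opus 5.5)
We verify continuity of $I(x,y)=[x,y]$ directly against the Vietoris subbase, that is, we show that $I^{-1}(U^-)$ and $I^{-1}(U^+)$ are open for every open $U\subseteq\alpha$. Here $\mathcal{T}_2^d(\alpha)\cup\triangle(\alpha)=\{(x,y)\in\alpha^2\mid x\le y\}$ carries the subspace topology of $\alpha^2$. We fix $(x_0,y_0)$ with $x_0\le y_0$ and look for a neighbourhood $W\times V$ of $(x_0,y_0)$ on which the relevant condition persists. Basic neighbourhoods in $\alpha$ are intervals $(\beta,\gamma]$, or $[0,\gamma]$ at $0$, and every such set is clopen. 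We will use two facts: a nonzero limit ordinal $x$ has neighbourhoods $(\beta,x]$ with $\beta<x$, while a successor ordinal or $0$ is isolated.

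\emph{The $U^-$ part.} Suppose $[x_0,y_0]\cap U\ne\emptyset$, and pick $z\in U$ with $x_0\le z\le y_0$. If $x_0<z<y_0$, we take $W=[0,z)$ and $V=(z,\alpha)$, both open. Then every $(x,y)\in W\times V$ with $x\le y$ satisfies $z\in[x,y]$. If $z=x_0$, then $U$ contains some $(\beta,x_0]$ with $\beta<x_0$ (or $\{x_0\}$ if $x_0$ is isolated). We take $W=(\beta,x_0]$ and $V=(x_0-\text{ish},\alpha)$, meaning $V=(\beta,\alpha)$. For any $x\in W$ we have $x\in U\cap[x,y]$. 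The case $z=y_0$ is symmetric: we take $V=(\beta,y_0]\subseteq U$, so that $y\in U$ and $y\in[x,y]$. Hence $I^{-1}(U^-)$ is open.

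\emph{The $U^+$ part.} This is the main obstacle. Suppose $[x_0,y_0]\subseteq U$. Since $[x_0,y_0]$ is compact, being a closed subset of the compact space $y_0+1$, and $U$ is open, it is enough to enlarge the interval slightly at each end while staying inside $U$.
\begin{itemize}
\item At the left end: if $x_0$ is isolated we take $W=\{x_0\}$. Otherwise $x_0$ is a limit ordinal and there is $\beta<x_0$ with $(\beta,x_0]\subseteq U$, and we take $W=(\beta,x_0]$.
\item At the right end: if $y_0$ is isolated we take $V=\{y_0\}$. Otherwise there is $\gamma<y_0$ with $(\gamma,y_0]\subseteq U$, and we take $V=(\gamma,y_0]$.
\end{itemize}
The key observation is that $V$ contains no ordinal greater than $y_0$. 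This works because in an ordinal every basic neighbourhood of a point can be chosen to have that point as its maximum.

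Now let $(x,y)\in W\times V$ with $x\le y$. Then $[x,y]\subseteq(\beta,x_0]\cup[x_0,y_0]\subseteq U$, because $y\le y_0$. So $I(x,y)\in U^+$, and $I^{-1}(U^+)$ is open. This completes the proof that $I$ is continuous.

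For the particular cases, the map $x\mapsto[0,x]$ is $x\mapsto I(0,x)$, which is continuous as a composition of continuous maps. The map $x\mapsto[x,\alpha)$ needs a separate check, because $\alpha$ is not an element of $\alpha$ when $\alpha$ is a limit. We use the same two-part argument with the right end removed. For $U^-$: if $z\in U\cap[x_0,\alpha)$ with $z>x_0$, take $W=[0,z)$; if $z=x_0$, take $W=(\beta,x_0]\subseteq U$. For $U^+$: if $[x_0,\alpha)\subseteq U$, take $W=(\beta,x_0]\subseteq U$ as before, and then $[x,\alpha)\subseteq(\beta,x_0]\cup[x_0,\alpha)\subseteq U$.
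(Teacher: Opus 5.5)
Your proof is correct and takes essentially the same route as the paper. Both arguments check $I^{-1}(U^-)$ and $I^{-1}(U^+)$ directly against the Vietoris subbase, and the key step for $U^+$ is the same in each: a neighbourhood $(\beta,x_0]\subseteq U$ of $x_0$ paired with a neighbourhood of $y_0$ bounded above by $y_0$ (the paper uses $(z_0,\alpha)\times[0,y_0+1)$). The remaining differences are cosmetic:
\begin{itemize}
\item You case on a witness $z\in U\cap[x_0,y_0]$ rather than on a basic interval $(a,b)$.
\item You verify the $U^-$ part for $x\mapsto[x,\alpha)$ directly, whereas the paper writes $[x,\alpha)$ as a union of the sets $I(x,y)$.
\item Your appeal to compactness of $[x_0,y_0]$ is never actually used and can be dropped.
\end{itemize}
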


\begin{proof} Assume that $(x_0,y_0)\in \mathcal{T}_2^d(\alpha)\cup \triangle(\alpha)$.
 We shall prove that for every open set $U$ in $\alpha$, $I^{-1}(U^-)$ and $I^{-1}(U^+)$ are open in $\mathcal{T}_2^d(\alpha)\cup \triangle(\alpha).$

For the former,
without loss of generality, we can assume that $U=(a,b)$, where we think that $[0,b)=(-1,b)$.
If  $I(x_0,y_0)\cap (a,b)\not=\emptyset,$ we consider the following \vspace*{3mm} cases:\\
{\it Case A:} $x_0\in (a,b)$.

Then $((a,b)\times \alpha)\cap (\mathcal{T}_2^d(\alpha)\cup \triangle(\alpha))$ is a neighborhood of $(x_0,y_0)$ and for each element
$(x,y)$ in the neighborhood, we have $I(x,y)\cap (a,b)\ni x$ and \vspace*{3mm} hence  $I(x,y)\cap (a,b)\not=\emptyset$.\\
{\it Case B:} $y_0\in (a,b)$.

The proof is similar to Case A and is therefore\vspace*{3mm} omitted.\\
{\it Case C:} Otherwise.

Then $x_0\leq a<b\leq y_0$ and there exists $z_0\in (a,b)$. Thus $[0,x_0+1)\times (z_0,\alpha)$ is a neighborhood
of $(x_0,y_0)$ and for each $(x,y)\in [0,x_0+1)\times (z_0,\alpha)$, $I(x,y)\cap (a,b)\ni z_0$, thus, $I(x,y)\cap (a,b)\not=\emptyset$. \\

Thus, $I^{-1}((a,b)^-)$ is open in $\mathcal{T}_2^d(\alpha)\cup \triangle(\alpha).$ \\

Now, we verify the latter. Let  $I(x_0,y_0)\subseteq U$.\footnote{For $(\cdot)^-$, it suffices to verify only basic open sets of $\alpha$, since the Vietoris lower set operator preserves arbitrary unions:$	\Big(\bigcup_{i\in I} B_i\Big)^-=\bigcup_{i\in I} B_i^-.$
	However, this union-reserving property fails for the upper set operator $(\cdot)^+$.}
t
If $x_0\neq 0$, then there exists $z_0<x_0$  such that $(z_0,x_0]\subseteq U$.
It follows that $[x_0,y_0]\subseteq (z_0,y_0+1)\subseteq U.$ Consider the neighborhood $((z_0,\alpha)\times [0,y_0+1))\cap ((\mathcal{T}_2^d(\alpha)\cup \triangle(\alpha)))$ of $(x_0,y_0)$. It is trivial to verify that, for each element $(x,y)$ of the neighborhood,
\[ I(x,y)\subseteq (z_0,y_0+1)\subseteq U.\]

If $x_0=0$, then $[x_0,y_0]\subseteq [0,y_0+1)\subseteq U.$ Consider the neighborhood $([0,\alpha)\times [0,y_0+1))\cap ((\mathcal{T}_2^d(\alpha)\cup \triangle(\alpha)))$ of $(x_0,y_0)$. It is trivial to verify that, for each element $(x,y)$ of the neighborhood,
\[ I(x,y)\subseteq [0,y_0+1)\subseteq U.\]

Thus, $I^{-1}(U^+)$ is open in $\mathcal{T}_2^d(\alpha)\cup \triangle(\alpha).$
Therefore, $I:\mathcal{T}_2^d(\alpha)\cup \triangle(\alpha)\to \Cld(\alpha)$ is continuous.

Note that $[0,x]=I(0,x)$ and hence the map $x\mapsto [0,x]$ is continuous.

To show that $f(x)=[x,\alpha)$ is continuous from $\alpha$ to $\Cld(\alpha)$, we note that $[x,\alpha)=\bigcup_{x\leq y<\alpha} I(x,y)$. Thus, for each open set $U$ in $\alpha$, if $f(x)\in U^-$, then there exists
$y\geq x$ such that $I(x,y)\in U^-$, by the continuity of $I$, there exists a neighborhood $V\times W$ of $(x,y)$ in $\alpha^2$ such that $I(V\times W)\subseteq U^-$. It follows that $f(V)\subseteq U^-$. This shows that $f^{-1}(U^-)$ is open in $\alpha$.
Now, assume that $f(x)\in U^+$ for some open set $U$ in $\alpha$. Then there exists $z\in\alpha$ such that $x\in (z,x+1)\subseteq U$. And for each $y\in (z,x+1)$, we have that
\[ f(y)=[y,\alpha)\subseteq (z,x+1)\cup f(x)\subseteq U.\]
That is, $f(z,x+1)\subseteq U^+$. This shows that $f^{-1}(U^+)$ is also open in $\alpha$.  Hence, $x\mapsto [x,\alpha)$ is continuous.
\end{proof}

\begin{lem}\label{ordinal-T_2} For each countable ordinal $\alpha$, there exists a continuous map $f^\alpha:\mathcal{T}_2^d(\alpha)\to \alpha^2$ such that, for each $(x,y)\in \mathcal{T}_2^d(\alpha)$,
\begin{equation}\label{equ-CT2+1} x<f_1^\alpha(x,y);~ f_2^\alpha(x,y)<y~~ \mbox{ and }~~[f_1^\alpha(x,y),\alpha)\cup [0,f_2^\alpha(x,y)]=\alpha.\end{equation}
Therefore, there exists a $CT_2$-map $\phi^\alpha:\mathcal{T}_2^d(\alpha)\to \Cld(\alpha)^2$
such that $\phi_1^\alpha(x,y)$ and $\phi_2^\alpha(x,y)$ are an upper set and a lower set in $\alpha$, respectively.
\end{lem}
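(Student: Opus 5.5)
The plan is to make $f^\alpha$ \emph{locally constant}, using the countability of $\alpha$ through an enumeration. First note that condition \eqref{equ-CT2+1} is satisfied whenever $f_1^\alpha(x,y)=m$ and $f_2^\alpha(x,y)=m-1$ for some successor ordinal $m$ with $x<m\le y$. Indeed, $x<m$ holds; $m-1<m\le y$ holds; and every $z\in\alpha$ satisfies either $z\ge m$ or $z\le m-1$, so $[m,\alpha)\cup[0,m-1]=\alpha$. Such an $m$ always exists, because $x+1\in(x,y]$ is a successor. So everything reduces to choosing this successor continuously in $(x,y)$.

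Since $\alpha$ is countable, fix an injection $e:\alpha\to\N$. For $(x,y)\in\mathcal{T}_2^d(\alpha)$, let $m(x,y)$ be the successor ordinal in $(x,y]$ with the least $e$-value. Then put
\[
f^\alpha(x,y)=\bigl(m(x,y),\,m(x,y)-1\bigr).
\]
The key step is to show that $m$ is locally constant on $\mathcal{T}_2^d(\alpha)$, which gives continuity of $f^\alpha$. Fix $(x_0,y_0)$, let $m_0=m(x_0,y_0)$, and let $S$ be the finite set of successor ordinals $s$ with $e(s)<e(m_0)$. By minimality, no element of $S$ lies in $(x_0,y_0]$, so each $s\in S$ satisfies either $s\le x_0$ or $s>y_0$.

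We then choose a neighborhood $V\times W$ of $(x_0,y_0)$ as follows.
\begin{itemize}
\item If $x_0$ is isolated, take $V=\{x_0\}$. Otherwise $x_0$ is a limit ordinal, so no successor equals $x_0$. In that case take $V=(z,x_0]$ with $z$ at least every $s\in S$ with $s<x_0$; this is possible because $S$ is finite.
\item If $y_0$ is isolated, take $W=\{y_0\}$. Otherwise $y_0$ is a limit ordinal, so $m_0<y_0$, and we take $W=(m_0,y_0]$ (using $m_0\ge 0$).
\end{itemize}
For $(x,y)\in V\times W$, every $s\in S$ lies outside $(x,y]$: if $s\le x_0$ then $s\le z<x$ (or $s\le x_0=x$), and if $s>y_0$ then $s>y_0\ge y$. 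On the other hand $m_0\in(x,y]$, because $x\le x_0<m_0$ and $m_0\le y$. Hence $m(x,y)=m_0$ on $V\times W$. The case analysis on whether $x_0$ and $y_0$ are limits is the only delicate point, and this is where the argument could go wrong; the boundary case $m_0=y_0$ can only occur when $y_0$ is a successor, hence isolated, so it causes no problem.

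Finally, set $\phi^\alpha(x,y)=\bigl([f_1^\alpha(x,y),\alpha),\,[0,f_2^\alpha(x,y)]\bigr)$. Continuity follows from Lemma~\ref{continuity-2}, since $t\mapsto[t,\alpha)$ and $t\mapsto[0,t]$ are continuous; even more directly, $\phi^\alpha$ is locally constant. The first set is an upper set avoiding $x$, the second is a lower set avoiding $y$, and their union is $\alpha$ by \eqref{equ-CT2+1}. Therefore $\phi^\alpha$ is the required $CT_2$-map on $\mathcal{T}_2^d(\alpha)$.
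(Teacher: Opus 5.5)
Your proof is correct, and it takes a genuinely different route from the paper's. The paper argues by transfinite induction on $\alpha$, with three cases: $\alpha=\beta+2$, $\alpha$ a limit, and $\alpha=\beta+1$ with $\beta$ a limit. It fixes a cofinal sequence of successors $\alpha_n$ and partitions $\mathcal{T}_2^d(\alpha)$ into pieces $\mathcal{U}_n$, $\mathcal{V}_{n,m}$ (and $\mathcal{C}_n$). On $\mathcal{U}_n$ it uses the inductively given $f^{\alpha_{n+1}}$, and elsewhere the constant value $(\alpha_n,\alpha_n)$. It then needs a separate sequential-continuity check at the points $(x,\beta)$, where the pieces $\mathcal{C}_n$ are not open.

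You replace all of this by one closed-form choice: the successor in $(x,y]$ of least index in a fixed enumeration. This needs no induction and no case split on the type of $\alpha$. It shows directly that $f^\alpha$ and $\phi^\alpha$ are locally constant, and it makes visible exactly where countability enters (finiteness of $S$), consistent with Example~\ref{omega1}. Your idea is in fact the ordinal analogue of the paper's own later argument in Lemmas~\ref{n-map} and~\ref{lem-0-dim-closed}, which picks the gap $(a_n,b_n)$ of least index between $x$ and $y$; your gaps are the pairs $m-1<m$. The paper's recursion builds the map explicitly from order intervals glued along a cofinal sequence, at the cost of considerably more bookkeeping.

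Your case analysis on whether $x_0,y_0$ are limits can be skipped. For a successor $s$, both $[0,s)$ and $[s,\alpha)$ are clopen, so $\{(x,y)\mid s\in(x,y]\}=[0,s)\times[s,\alpha)$ is clopen in $\mathcal{T}_2^d(\alpha)$. The set $\{(x,y)\mid m(x,y)=m_0\}$ is then a finite intersection of such sets and their complements, hence clopen.
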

\begin{proof}
We show the first statement in Lemma \ref{ordinal-T_2}.
 First, the first statement in Lemma \ref{ordinal-T_2} holds when $\alpha=2$. Indeed, we only need to define $f^2(0,1)=(1,0).$

Now, suppose $\alpha$ is a countable ordinal and the first statement in Lemma \ref{ordinal-T_2} holds for each $\beta<\alpha$.
That is, we have defined a map $f^\beta:\mathcal{T}_2^d(\beta)\to \beta^2$ satisfying our requirements.
We define a map $f^\alpha:\mathcal{T}_2^d(\alpha)\to \alpha^2$ satisfying our requirements that the first statement in Lemma \ref{ordinal-T_2} holds for $\alpha$ by considering the following \vspace*{3mm} cases.\\
{\it Case A:} $\alpha=\beta+2$ for some $\beta$.

Then, Lemma \ref{ordinal-T_2} holds for $\beta+1$. Define $f^\alpha:\mathcal{T}_2^d(\alpha)\to \alpha^2$ as follows:
\[ f^\alpha(x,y)=\left\{
\begin{array}{ll}
f^{\beta+1}(x,y)& y<\beta+1,\\
(\beta+1, \beta) & y=\beta+1.
\end{array}
\right.
\]
Note that $\mathcal{T}_2^d(\alpha)=\mathcal{T}_2^d(\beta+1)\oplus (\beta+1)\times\{\beta+1\}$. It follows that $f^\alpha:\mathcal{T}_2^d(\alpha)\to \alpha^2$ is continuous. Trivially, it also satisfies the other \vspace*{3mm} requirements. \\
{\it Case B:} $\alpha$ is a countable limit ordinal.

Since $\alpha$ is countable, there exists a sequence $\{0=\alpha_0<\alpha_1<\alpha_2<\cdots\}$ such that  $\sup_{n\in\N} \alpha_n=\alpha$
and $\alpha_n=\beta_n+1$ for each $n\in\N$. For each pair $n,m\in\N$ with $n\leq m$, let
\[
\begin{split}
\mathcal{U}_n=\{(x,y)\in [\alpha_{n-1},\alpha_n]^2\mid x<y\}, ~~\\
\mathcal{V}_{n,m}=\{(x,y)\in \alpha^2\mid \alpha_{n-1}\leq x<\alpha_n\leq \alpha_m<y\leq \alpha_{m+1}\}.
\end{split}
\]
Then, $\{\mathcal{U}_n,\mathcal{V}_{n,m}\mid n,m\in\N,n\leq m\}$  is a partition of $\mathcal{T}_2^d(\alpha)$ consisting of open sets in   $\mathcal{T}_2^d(\alpha)$.
Hence, we only need to define  a continuous map $f^\alpha$ on each element of the family.
For each $n\in\N$ and $(x,y)\in\mathcal{U}_n$, let $f^\alpha(x,y)=f^{\alpha_{ n+1}}(x,y).$
Then $f^\alpha:\mathcal{U}_n\to \alpha^2$ satisfies the requirements.
And for $n\le m$ and $(x,y)\in\mathcal{V}_{n,m}$, define the constant  map $f^\alpha(x,y)=(\alpha_n,\alpha_n)$.
Then, it is easy to verify that $f^\alpha:\mathcal{V}_{n,m}\to \alpha^2$ satisfies the \vspace*{3mm} requirements. \\
{\it Case C:} $\alpha=\beta+1$ for some countable limit ordinal $\beta\not=0$.

The proof of Case C is similar to that of Case B. Since $\beta$ is countable, there exists a sequence $\{0=\alpha_0<\alpha_1<\alpha_2<\cdots\}$ such that  $\sup_{n\in\N} \alpha_n=\beta$
and $\alpha_n=\beta_n+1$ for each $n\in\N$. For each pair $n,m\in\N$ with $n\leq m$, let
\begin{gather*}
\mathcal{U}_n=\{(x,y)\in [\alpha_{n-1},\alpha_n]^2\mid x<y\},\\
\mathcal{U}=\bigcup_{n\in\N}\mathcal{U}_n,\\
\mathcal{V}_{n,m}=\{(x,y)\in \alpha^2\mid \alpha_{n-1}\leq x<\alpha_n\leq \alpha_m<y\leq \alpha_{m+1}\},\\
\mathcal{C}_n=\{(x,\beta)\mid x\in [\alpha_{n-1},\alpha_n)\},\\
\mathcal{C}=\bigcup_{n\in\N}\mathcal{C}_n.
\end{gather*}
Then, $\{\mathcal{U}_n,\mathcal{V}_{n,m}, \mathcal{C}_n\mid n,m\in\N,n\leq m\}$ is a partition of $\mathcal{T}_2^d(\alpha)$. Moreover, $\mathcal{U}_n$ is clopen set in the clopen set $\mathcal{U}$ in $\mathcal{T}_2^d(\alpha)$ for $n\in\N$, $ \mathcal{V}_{n,m}$ is open for $ n,m\in\N$ with $n\leq m$,  $\mathcal{C}_n$
is clopen in the closed subspace $\mathcal{C}$ of $\mathcal{T}_2^d(\alpha)$. First, we define  $f^\alpha$ for each element of the family $\{\mathcal{U}_n,\mathcal{V}_{n,m}, \mathcal{C}_n\mid n,m\in\N,n\leq m\}$ to $ \alpha^2$ as follows: For $\mathcal{U}_n$ and $\mathcal{V}_{n,m}$, we use the same formulas as in Case B. And for each $n\in\N$, define the constant  map $f^\alpha: \mathcal{C}_n\to \alpha^2$ by
$f^\alpha(x,\beta)=(\alpha_n,\alpha_n).$
Trivially, $f^\alpha: \mathcal{C}_n\to \alpha^2$ satisfies the requirements. It follows that  $f^\alpha: \mathcal{C}\to \alpha^2$ satisfies the requirements.
Moreover, $\mathcal{T}_2^d(\alpha)$ is metrizable. Therefore, to show that $f^\alpha: \mathcal{T}_2^d(\alpha)\to \alpha^2$ satisfies the requirements for $\alpha$,
it suffices to verify that for each sequence $(x_i,y_i)\in \mathcal{V}_{n_i,m_i}$ and $(x,\beta)\in \mathcal{C}_n$, if $(x_i,y_i)\to (x,\beta)$, then
$f^\alpha(x_i,y_i)\to f^\alpha(x,\beta).$
Indeed, since $(x_i,y_i)\to (x,\beta)$, we have that $n_i=n $ and $m_i>n$ for large enough $i$. It follows from the definition of $f^\alpha$ that $f^\alpha(x_i,y_i)=(\alpha_n,\alpha_n)=f^\alpha(x,\beta)$ for large enough $i$.

Now, we verify the second statement. Let
\[ \phi^{\alpha}(x,y)=\left([f^\alpha_1(x,y),\alpha), [0,f^\alpha_2(x,y)]\right),~~(x,y)\in\mathcal{T}_2^d(\alpha).\]
It follows from the first statement and Lemma \ref{continuity-2} that $\phi^{\alpha}:\mathcal{T}_2^d(\alpha)\to \Cld(\alpha)^2$ is continuous.
By Equation (\ref{equ-CT2+1}), $\phi^{\alpha}$ satisfies also the other conditions in Lemma \ref{ordinal-T_2}.
 \end{proof}

\begin{lem}\label{partition}
Let $\alpha$ be an ordinal and
 \begin{gather*}
\mathcal{L}=\{(x,B)\in\mathcal{T}_3(\alpha)\mid x<b\mbox{ for each }b\in B\};\\
\mathcal{R}=\{(x,B)\in\mathcal{T}_3(\alpha)\mid x>b\mbox{ for each }b\in B\};\\
\mathcal{M}=\mathcal{T}_3(\alpha)\setminus (\mathcal{R}\cup\mathcal{L}).
\end{gather*}
Then, $\mathcal{T}_3(\alpha)=\mathcal{L}\oplus \mathcal{R}\oplus\mathcal{M}$.
\end{lem}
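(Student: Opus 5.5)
The three sets $\mathcal{L},\mathcal{R},\mathcal{M}$ are pairwise disjoint and cover $\mathcal{T}_3(\alpha)$: $\mathcal{M}$ is the complement of $\mathcal{L}\cup\mathcal{R}$, and $\mathcal{L}\cap\mathcal{R}=\emptyset$ because $B\neq\emptyset$ (a point $b\in B$ cannot satisfy both $x<b$ and $x>b$). So it suffices to show that each of the three sets is open in $\mathcal{T}_3(\alpha)$. Then each is also closed, being the complement of the union of the other two, and $\mathcal{T}_3(\alpha)$ is their topological sum. I will follow the argument for $\R^+$ in Example \ref{real-numbers}, replacing real midpoints by order intervals of $\alpha$. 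Throughout I use that $x\notin B$ and that $B$ is closed.

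\textbf{$\mathcal{L}$ is open.} Let $(x,B)\in\mathcal{L}$. Then $B\subseteq (x,\alpha)=[x+1,\alpha)$. Since $x+1$ is a successor (or $0$ is impossible here), $[0,x+1)=[0,x]$ and $(x,\alpha)$ are both open. The set $[0,x]\times (x,\alpha)^+$ is a neighborhood of $(x,B)$. For any $(x',B')$ in it we have $x'\le x<b'$ for every $b'\in B'$, so $(x',B')\in\mathcal{L}$. Alternatively, one can use continuity of $\min$ from Lemma \ref{max-continuous}; the direct neighborhood is simpler.

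\textbf{$\mathcal{R}$ is open.} Let $(x,B)\in\mathcal{R}$. Then $B\subseteq[0,x)$ with $B$ closed. The point to check is that $\sup B<x$. If $x$ is a successor this is immediate. If $x$ is a limit and $B$ were cofinal in $x$, then $x\in\cl B=B$, a contradiction. So $\gamma=\sup B$ satisfies $\gamma\in B$ and $\gamma<x$. Hence $(\gamma,\alpha)\times[0,\gamma]^+$ is a neighborhood of $(x,B)$, using that $[0,\gamma]=[0,\gamma+1)$ is open. It lies in $\mathcal{R}$.

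\textbf{$\mathcal{M}$ is open.} This is the step needing most care. Let $(x,B)\in\mathcal{M}$. Then $B$ meets both $[0,x)$ and $(x,\alpha)$. As in the $\mathcal{R}$ case, $B\cap[0,x]$ is closed and misses $x$. If it is nonempty, it has a maximum $a<x$; otherwise $x$ would be a limit of points of $B$, contradicting $x\notin B$. Let $c=\min(B\cap(x,\alpha))$, which exists by well-ordering. Then the open interval $(a,c)$ contains $x$, and
\[ (a,c)\times\big([0,a]^-\cap (c,\alpha)^-\cap\ldots\big) \]
is to be cut down to $(a,c)\times\big([0,a]^-\cap [c,\alpha)^-\big)$, where $[0,a]$ and $[c,\alpha)=(c',\alpha)$-type sets are open. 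Note that $[c,\alpha)$ is open when $c$ is a successor or when handled via $(a',\alpha)$. To avoid trouble if $c$ is a limit, I will instead use $(x,\alpha)^-$, which is open and contains $c$. Any $(x',B')$ in $(a,c)\times\big([0,a]^-\cap (x,\alpha)^-\big)$ has a point of $B'$ at most $a<x'$. I need a point of $B'$ above $x'$, so I must have $x'\le x$. To get this, I shrink the first factor to $(a,x]$, which is open since $x+1$ exists. Then $B'$ has points below $x'$ and above $x\ge x'$, so $(x',B')\in\mathcal{M}$.

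Thus all three sets are open, hence clopen, and $\mathcal{T}_3(\alpha)=\mathcal{L}\oplus\mathcal{R}\oplus\mathcal{M}$.
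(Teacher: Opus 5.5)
Your proof is correct and is essentially the paper's. Your final neighborhoods $[0,x]\times(x,\alpha)^+$, $(\gamma,\alpha)\times[0,\gamma]^+$ and $(a,x]\times\bigl([0,a]^-\cap(x,\alpha)^-\bigr)$ are exactly the paper's $[0,x+1)\times[x+1,\alpha)^+$, $(\max B,\alpha)\times[0,\max B+1)^+$ and $(\max(B\cap[0,x]),x+1)\times\bigl([0,\max(B\cap[0,x])+1)^-\cap[x+1,\alpha)^-\bigr)$. You also justify why $\max B$ and $\max(B\cap[0,x])$ exist and lie below $x$, which the paper takes for granted. In a write-up, delete the false start in the $\mathcal{M}$ paragraph (the display with ``$\ldots$'' and the discussion of $c$ and $[c,\alpha)$), since all you need there is $B\cap(x,\alpha)\neq\emptyset$.
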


\begin{proof}
Trivially, $\mathcal{L},~\mathcal{R} $ and $\mathcal{M}$ are a partition of the set $\mathcal{T}_3(\alpha)$.
Hence, we only need to verify that each of them is open in $\mathcal{T}_3(\alpha)$.
For each $(x,B)\in \mathcal{L}$, $[0,x+1)\times [x+1,\alpha)^+$
is a neighborhood of $(x,B)$ in $\mathcal{T}_3(\alpha)$ and  misses $\mathcal{R}\cup \mathcal{M}$. Hence, $\mathcal{L}$ is open in $\mathcal{T}_3(\alpha)$.
For each $(x,B)\in \mathcal{R}$, $(\max B,\alpha)\times [0,\max B+1)^+$
is a neighborhood of $(x,B)$ in $\mathcal{T}_3(\alpha)$ and  misses $\mathcal{L}\cup \mathcal{M}$. Hence, $\mathcal{R}$ is open in $\mathcal{T}_3(\alpha)$.
 Moreover, for each $(x,B)\in \mathcal{M}$,
 \[ (\max (B\cap [0,x]),x+1)\times \left([0,\max (B\cap [0,x])+1)^-\cap [x+1,\alpha)^-\right)\]
   is a neighborhood of $(x,B)$ in $\mathcal{T}_3(\alpha)$ and misses $\mathcal{L}\cup \mathcal{R}$.
This shows that $\mathcal{M}$ is also open in $\mathcal{T}_3(\alpha)$.
\end{proof}

\begin{lem}\label{continuity-1}
Let $\alpha$ be an ordinal. Define $l:\mathcal{M}\cup \mathcal{L}\to \mathcal{L}$ and $r:\mathcal{M}\cup \mathcal{R}\to \mathcal{R}$  as
\[ l(x,B)=(x,[x,\alpha)\cap B),~~r(x,B)=(x,[0,x]\cap B).\]
Then, both $l$ and $r$ are continuous.
\end{lem}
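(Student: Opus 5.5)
The plan is to prove continuity of $l$ and $r$ locally, by reducing each of them near a given point to the map $B\mapsto B\cap Y$ for a \emph{fixed} clopen set $Y\subseteq\alpha$. Lemma \ref{cap-cup} already shows that such a map is continuous.

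First I check that $l$ and $r$ are well defined. Let $(x,B)\in\mathcal{M}\cup\mathcal{L}$. Since $x\notin B$, we have $[x,\alpha)\cap B=(x,\alpha)\cap B$. This set is non-empty: for $\mathcal{L}$ this is immediate, and for $\mathcal{M}$ not every $b\in B$ lies below $x$. It is closed, and it lies above $x$, so $l(x,B)\in\mathcal{L}$. Symmetrically, $[0,x]\cap B$ is non-empty and closed for $(x,B)\in\mathcal{M}\cup\mathcal{R}$, so $r(x,B)\in\mathcal{R}$.

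Now fix $(x_0,B_0)$ in the domain. Since $B_0$ is closed and $x_0\notin B_0$, I choose a basic clopen neighbourhood $W$ of $x_0$ with $W\cap B_0=\emptyset$. If $x_0=0$, take $W=[0,1)=\{0\}$. Otherwise take $W=(z,x_0]=(z,x_0+1)$ for some $z<x_0$. I then use the neighbourhood
\[ \mathcal{N}=W\times (\alpha\setminus W)^+ \]
of $(x_0,B_0)$, which is open because $W$ is clopen. For $(x,B)\in\mathcal{N}$, $B$ misses $W$ and $x\in W$. Because $W$ is an interval ending at $x_0$, this gives
\[ B\cap[x,\alpha)=B\cap[x_0+1,\alpha),\qquad B\cap[0,x]=B\cap[0,z], \]
where in the case $x_0=0$ only the first identity is needed. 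The second case cannot occur for $r$, since $\mathcal{R}\cup\mathcal{M}$ requires some $b<x$.

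Hence on $\mathcal{N}$ we have $l(x,B)=(x,\,B\cap[x_0+1,\alpha))$ and $r(x,B)=(x,\,B\cap[0,z])$. The sets $[x_0+1,\alpha)$ and $[0,z]=[0,z+1)$ are clopen in $\alpha$. By Lemma \ref{cap-cup}, $B\mapsto B\cap Y$ is continuous into $\Cld^*(\alpha)$ for a fixed clopen $Y$. Its values here lie in $\Cld(\alpha)$, by the non-emptiness shown above. Therefore $l$ and $r$ are continuous on $\mathcal{N}$. As these neighbourhoods cover the domain, $l$ and $r$ are continuous.

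The only delicate point is the choice of $W$ so that the dependence on the moving point $x$ disappears. This needs the order-topology fact that $x_0\notin B_0$ with $B_0$ closed gives an interval $(z,x_0]$ disjoint from $B_0$. That fact holds whether $x_0$ is a successor or a limit ordinal, so I expect no real obstacle.
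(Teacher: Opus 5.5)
Your proof is correct, but it is organized differently from the paper's.

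\textbf{The paper's route.} It first reduces to $\mathcal{M}$, using the clopen decomposition of Lemma~\ref{partition} and the fact that $l$ and $r$ are the identity on $\mathcal{L}$ and $\mathcal{R}$. It then checks subbasic sets by hand. For $U^-$ it uses the neighbourhood $[0,x_0+1)\times\bigl((x_0,\alpha)\cap(a,b)\bigr)^-$. For $U^+$ it takes $m=\max(B_0\cap[0,x_0])$ and the neighbourhood $(m,x_0+1)\times\bigl(U\cup[0,m+1)\bigr)^+$, on which $[x,\alpha)\cap B\subseteq(m,\alpha)\cap U$. The map $r$ is left to a ``similar argument''.

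\textbf{Your route.} You choose a single clopen neighbourhood $W\times(\alpha\setminus W)^+$ on which the moving cut at $x$ can be replaced by a fixed cut. Locally, $l$ and $r$ then have the form $(x,B)\mapsto(x,B\cap Y)$ with $Y$ a fixed clopen set, and you quote Lemma~\ref{cap-cup}. Your gap $W=(z,x_0]$ plays the role of the paper's interval $(m,x_0+1)=(m,x_0]$, which is the largest such choice.

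\textbf{Comparison.} Your argument is more modular. It treats $l$ and $r$ at once, and it works on $\mathcal{M}\cup\mathcal{L}$ and $\mathcal{M}\cup\mathcal{R}$ directly without Lemma~\ref{partition}. It also makes clear that $x_0\notin B_0$ is exactly the hypothesis needed, which matches the counterexample in the remark after the lemma. The paper's argument is a self-contained subbase computation.

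\textbf{One implicit step.} Lemma~\ref{cap-cup} lands in $\Cld^*(Y)$, not in $\Cld^*(\alpha)$. For clopen $Y$ the Vietoris topology on $\Cld^*(Y)$ coincides with the subspace topology from $\Cld^*(\alpha)$. This holds because $U^-\cap\Cld(Y)$ and $U^+\cap\Cld(Y)$ are the Vietoris sets of $U\cap Y$ in $Y$, and every open $V\subseteq Y$ is open in $\alpha$. It would be worth stating this in one line.
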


\begin{proof}We only need to verify $l_2(x,B)=[x,\alpha)\cap B$ and $r_2(x,B)=[0,x]\cap B$ are continuous from $\mathcal{M}$ to $\Cld(\alpha)$.
For each $(x_0,B_0)\in \mathcal{M}$, let $m=\max (B_0\cap [0,x_0])$ and $M=\min (B_0\cap [x_0,\alpha))$. Then $m<x_0<x_0+1\leq M$.

For each open set $(a,b)$ in $\alpha$ with $l_2(x_0,B_0)\in (a,b)^-$, we have that
\[ [x_0,\alpha)\cap B_0\cap (a,b)\not=\emptyset.\]
Since $x_0\not\in B_0$, we have that $B_0\in\left( (x_0, \alpha)\cap (a,b)\right)^-$. Thus, $[0,x_0+1)\times \left( (x_0, \alpha)\cap (a,b)\right)^-$
is a neighborhood of $(x_0,B_0)$ and, for each $(x,B)\in [0,x_0+1)\times \left( (x_0, \alpha)\cap (a,b)\right)^-$, we have that $l_2(x,B)\in (a,b)^-$.

For each open set $U$ in $\alpha$ with $l_2(x_0,B_0)\in U^+$, we have that
\[ [x_0,\alpha)\cap B_0\subseteq U.\]
It follows that $B_0\subset U\cup [0,m+1)$. Thus, $(x_0,B_0)\in (m,x_0+1)\times \left(U\cup [0,m+1)\right)^+$. Moreover, for each $(x,B)\in (m,x_0+1)\times \left(U\cup [0,m+1)\right)^+$, we have that
\[ l_2(x,B)=[x,\alpha)\cap B\subseteq (m,\alpha)\cap \left(U\cup [0,m+1)\right)= (m,\alpha)\cap U\subseteq U.\]
That is, $l_2(x,B)\in U^+$.

We have shown that $l_2:\mathcal{M}\to\Cld(\alpha)$ is continuous.

Using a similar argument, we can show that  $r_2:\mathcal{M}\to\Cld(\alpha)$ is also continuous.
\end{proof}

\begin{rem} From the proof of Lemma \ref{continuity-1} it follows that $l_2(x,B)=[x,\alpha)\cap B$
is continuous from $\mathcal{M}\cup \mathcal{L}$ to $\Cld(\alpha)$. However, the extension of $l_2$ to $\{(x,B)\in \alpha\times \Cld(\alpha)\mid l_2(x,B)\not=\emptyset\}$
is not continuous. Indeed, let $\alpha=\omega_0+2$ and $B_n=\{n,\omega_0+1\}$. Then $B_n\to \{\omega_0,\omega_0+1\}$.
But,
\[ l_2(\omega_0,B_n)=\{\omega_0+1\}\to \{\omega_0+1\}\not=\{\omega_0,\omega_0+1\}= l_2(\omega_0,\{\omega_0,\omega_0+1\}).\]
Thus, for Lemma \ref{continuity-2}, although $I(x,y)=r_2(y,l_2(x,\alpha)),$ we
 cannot obtain a simple proof of Lemma \ref{continuity-2} by Lemma \ref{continuity-1}.
 \end{rem}

\begin{ex}\label{countable-ordinal} Each countable ordinal  space  is $CT_3$.
\end{ex}
\begin{proof} Let $\alpha$ be a countable ordinal. By Lemma \ref{partition}, we only need
to define three $\mathcal{T}_3$ maps $\phi^\mathcal{L}:\mathcal{L}\to \Cld(\alpha)^2$,~$\phi^\mathcal{R}:\mathcal{R}\to \Cld(\alpha)^2$ and $\phi^\mathcal{M}:\mathcal{M}\to \Cld(\alpha)^2$.

 Define $\phi^\mathcal{L}:\mathcal{L}\to \Cld(\alpha)^2$ as follows:  For each $(x,B)\in \mathcal{L}$,
\begin{gather*}
\phi^\mathcal{L}(x,B)=\phi^\alpha(x,\min B).
\end{gather*}
Here, $\phi^\alpha:\mathcal{T}_2(\alpha)\to \Cld(\alpha)^2$ satisfies the conditions in Lemma \ref{ordinal-T_2}. Using Lemmas \ref{max-continuous} and \ref{ordinal-T_2}, it is not hard to verify that $\phi^\mathcal{L}$ is a $CT_3$-map.

Using the same method, we can define a $CT_3$-map $\phi^\mathcal{R}:\mathcal{R}\to \Cld(\alpha)^2$.

Now, for $(x,B)\in\mathcal{M}$, by Lemma \ref{continuity-1}, we can define
 \begin{gather*}
\phi^\mathcal{M}(x,B)=\left(\phi_1^\mathcal{L}\left(l(x, B)\right)\cup \phi_1^\mathcal{R}\left(r(x, B)\right),\phi_2^\mathcal{L}\left(l(x, B)\right)\cap \phi_2^\mathcal{R}\left(r(x,B)\right)\right).
\end{gather*}
By Lemmas \ref{cap-cup} and \ref{continuity-1}, $\phi_1^\mathcal{M}:\mathcal{M}\to\Cld(\alpha)$ is continuous. Note that for $(x,B)\in \mathcal{M}$,
\[ \phi_2^\mathcal{M}(x,B)=I\left(\min \phi_2^\mathcal{R}\left(r(x, B)\right),\max \phi_2^\mathcal{L}\left(l(x, B)\right)\right). \]
Using Lemmas \ref{max-continuous}, \ref{continuity-1} and \ref{continuity-2},
this shows that $\phi_2^\mathcal{M}:\mathcal{M}\to\Cld(\alpha)$ is also continuous. It is straightforward to verify that the map $\phi^\mathcal{M}$ also satisfies the other conditions.
Hence, $\alpha$ is $CT_3$.
\end{proof}

From Theorem \ref{0-dim-CT4} it follows that the following examples hold:

\begin{ex}\label{successive-ordinal-T4} Each countable successor ordinal space is $CT_4$.
\end{ex}

However, we are going to show that no uncountable ordinal space is  $CT_2$. As usual, $\omega_1$ is the first uncountable ordinal and hence, it is the set of all countable ordinals.
An ordinal $\alpha$ is called {\bf regular} if $\cf (\alpha)=\alpha$. For example, $\omega_1$ is regular. A set $S$ in $\alpha$ is called a  {\bf stationary set} if it intersects each unbounded closed set  in $\alpha$. First, we give the following lemma.

\begin{lem}{\rm (\cite{Fodor-1956}, cf. \cite[Theorem 4.41]{Levy-2005} Fodor Pressing-Down Lemma)} \label{Fodor} Let $\alpha$ be a regular ordinal.
For each map $r:\alpha\setminus\{0\}\to \alpha$ with $r(\xi)<\xi$ for each $\xi\in\alpha \setminus\{0\}$, there exist a stationary set $S$ in $\alpha$ and $\gamma\in\alpha$ such that $r(s)=\gamma$ for each $s\in S$.
\end{lem}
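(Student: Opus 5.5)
The statement is Fodor's classical pressing-down lemma, so I would give the standard proof via diagonal intersections of closed unbounded sets. Throughout I take $\alpha$ to be a regular \emph{uncountable} ordinal, as in the intended application $\alpha=\omega_1$. For $\alpha=\omega$ the statement fails: every unbounded subset of $\omega$ is closed, so the stationary sets are exactly the cofinite ones, and $r(n)=n-1$ is constant on no cofinite set. Call a set \emph{club} if it is closed and unbounded in $\alpha$.

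The first step is two preliminary facts about clubs.

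\begin{enumerate}
\item[(a)] The intersection of fewer than $\alpha$ clubs is a club. Closedness is immediate. For unboundedness, given $\{C_i\mid i<\kappa\}$ with $\kappa<\alpha$, I would build an increasing $\omega$-sequence $\beta_0<\beta_1<\cdots$ in which $\beta_{n+1}$ exceeds, for every $i$, some element of $C_i$ lying above $\beta_n$. Regularity of $\alpha$ keeps these suprema of $\kappa$ many ordinals below $\alpha$, and $\sup_n\beta_n<\alpha$ because $\cf(\alpha)>\omega$. This supremum is a limit of points of each $C_i$, so it lies in every $C_i$.
\item[(b)] The set of limit ordinals below $\alpha$ is a club.
\end{enumerate}

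The key step is to show that, for clubs $\{C_\gamma\mid\gamma<\alpha\}$, the diagonal intersection
\[
\Delta_{\gamma<\alpha}C_\gamma=\Bigl\{\xi<\alpha \Bigm| \xi\in\bigcap_{\gamma<\xi}C_\gamma\Bigr\}
\]
is a club.

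\emph{Closedness.} Let $\xi$ be a limit of points $\xi_k\in\Delta C_\gamma$, and fix $\gamma<\xi$. Then all but boundedly many of the $\xi_k$ below $\xi$ exceed $\gamma$, so those $\xi_k$ lie in $C_\gamma$. Since $C_\gamma$ is closed, $\xi\in C_\gamma$.

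\emph{Unboundedness.} Starting from any $\beta_0$, choose $\beta_{n+1}>\beta_n$ with $\beta_{n+1}\in\bigcap_{\gamma<\beta_n}C_\gamma$, which is possible by (a) since $\beta_n<\alpha$. Put $\beta=\sup_n\beta_n<\alpha$. For $\gamma<\beta$ we have $\gamma<\beta_n$ for some $n$, and then $\beta_m\in C_\gamma$ for all $m>n$. Hence $\beta\in C_\gamma$ by closedness, and so $\beta\in\Delta C_\gamma$.

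I expect this verification to be the main technical obstacle; everything else is bookkeeping.

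Finally, I would argue by contradiction. Suppose that for every $\gamma<\alpha$ the fibre $r^{-1}(\gamma)$ is not stationary, and pick a club $C_\gamma$ disjoint from $r^{-1}(\gamma)$. Let $D$ be the intersection of $\Delta_{\gamma<\alpha}C_\gamma$ with the club of nonzero limit ordinals; by (a) and (b) this is still a club, so in particular it is nonempty. Take $\xi\in D$ and set $\gamma=r(\xi)<\xi$. Since $\xi\in\Delta C_\gamma$ and $\gamma<\xi$, we get $\xi\in C_{\gamma}$. But $\xi\in r^{-1}(\gamma)$, contradicting $C_\gamma\cap r^{-1}(\gamma)=\emptyset$. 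Hence some fibre $S=r^{-1}(\gamma)$ is stationary, and $r$ is constantly $\gamma$ on $S$, as required.
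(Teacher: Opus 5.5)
Your proof is correct and is the standard one: facts (a) and (b), the closure of clubs under diagonal intersection, and the final contradiction are all argued soundly, while the paper gives no proof of its own and simply cites Fodor and Levy's textbook for this classical result. Your restriction to uncountable regular $\alpha$ is also justified. As literally stated, the lemma fails for the regular ordinal $\omega$: there every unbounded set is closed, so stationary means cofinite, and $r(n)=n-1$ has singleton fibres. This is harmless for the paper, which applies the lemma only with $\alpha=\omega_1$.
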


\begin{ex} \label{omega1} No uncountable ordinal space is  $CT_2$.
\end{ex}

\begin{proof}Let $\alpha$ be an   uncountable ordinal. If $\alpha$ is $CT_2$, then there exists a $CT_2$-map $\phi:\mathcal{T}^d_2(\alpha)\to \Cld(\alpha)^2$.
In particular, for each countable limit ordinal $\lambda$, $\phi_1(\lambda,\lambda+1)\not\ni \lambda$, that is, $\phi_1(\lambda,\lambda+1)\in (\alpha\setminus\{\lambda\})^+$.
It follows that there exists $\mu_\lambda<\lambda$ such that
$\phi_1(\mu,\lambda+1)\in (\alpha\setminus\{\lambda\})^+$, i.e., $\phi_1(\mu,\lambda+1)\not\ni \lambda$, for $\mu\in (\mu_\lambda,\lambda+1)$.
Thus,
\begin{equation}\label{equ-200} \phi_2(\mu,\lambda+1)\ni \lambda ~~\mbox{for}~~\mu\in (\mu_\lambda,\lambda+1).\end{equation}
Now, we define a map $r:\omega_1\setminus\{0\}\to \omega_1$ as follows

\[
r(\xi)=\left\{
\begin{array}{ll}
\mu_\xi & \mbox{if} ~\xi ~\mbox{is limit},\\
\xi-1 &  \mbox{if} ~\xi ~\mbox{is a successor ordinal}.
\end{array}
\right.
\]
\nolinenumbers

Then $r(\xi)<\xi$ for each $\xi\in\omega_1\setminus\{0\}$. Using Lemma \ref{Fodor}, there exist a stationary set $S$ in $\omega_1$ and $\gamma\in\omega_1$ such that $r(s)=\gamma$ for each $s\in S$. Choose a countable ordinal $\eta>\gamma$ and a strictly increasing sequence $(\mu_n)_{n<\omega}$ in $S$ with each $\mu_n$ a limit ordinal and $\mu_n>\eta$ for each $n$, and let $\mu=\sup_n \mu_n\in\omega_1$. By Equation (\ref{equ-200}), we have that
\begin{equation*}\label{equ-100} \phi_2(\eta,\mu_n+1)\ni \mu_n.\end{equation*}
Note that $\mu_n\to \mu$ and $\mu_n+1\to \mu$. By the continuity of $\phi_2$ in the second variable at the point $(\eta,\mu)$, we have
\[
\phi_2(\eta,\mu)=\lim_{n\to\infty} \phi_2(\eta,\mu_n+1)\ni\mu,
\]
a contradiction.
\end{proof}

\section{Subspaces of real numbers}\label{subspaces-of-R}

For a subspace $X$ of $\R$, let
\[ \mathcal{T}_2^d(X) =\{(x,y)\in X^2\mid x<y\}.\]
If there exists a continuous map $f:\mathcal{T}_2^d(X) \to X$ such that $x_1<f(x_1,x_2)<x_2$ for each pair $(x_1,x_2)\in\mathcal{T}_2^d(X) $, then we say that $X$ has the {\bf continuous interval property} ({\bf CIP}, in short) and $f$ is a {\bf continuous interval map}.
We have the following general result:

\begin{thm}\label{rations-general} Let $X$ be a subspace of $\R$ with the CIP. Then $X$ is  $CT_2$.	
\end{thm}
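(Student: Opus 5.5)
The plan is to imitate the proof for the Sorgenfrey line. Let $f$ be a continuous interval map for $X$. First split $\mathcal{T}_2(X)=X^2\setminus\triangle(X)$ into
\[
\mathcal{T}_2^d(X)=\{(x,y)\mid x<y\}\quad\text{and}\quad \mathcal{T}_2^u(X)=\{(x,y)\mid x>y\}.
\]
Both are open in $X^2$ because the order of $\R$ is open, so they are clopen in $\mathcal{T}_2(X)$. It therefore suffices to define a continuous map on each piece.

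On $\mathcal{T}_2^d(X)$, put $c=f(x,y)\in X$ and set
\[
\phi(x,y)=\bigl([c,+\infty)\cap X,\;(-\infty,c]\cap X\bigr).
\]
On $\mathcal{T}_2^u(X)$, set $\phi(x,y)=\bigl((-\infty,c]\cap X,\;[c,+\infty)\cap X\bigr)$ with $c=f(y,x)$. The sets are nonempty (they contain $c$) and closed in $X$. Their union is $X$. From $x<c<y$ we get $x\notin\phi_1(x,y)$ and $y\notin\phi_2(x,y)$, and the symmetric case is the same. Since $f$ and the coordinate swap are continuous, continuity of $\phi$ reduces to the following claim.

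\emph{Claim.} The maps $r,l\colon X\to\Cld(X)$, $r(c)=[c,+\infty)\cap X$ and $l(c)=(-\infty,c]\cap X$, are continuous for the Vietoris topology.

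I expect this claim to be the main step, though it should be routine. Fix $c_0$ and an open $U\subseteq X$; it suffices to check subbasic sets.

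For $r(c_0)\in U^-$: pick $z\in U$ with $z\ge c_0$. If $z>c_0$, then every $c<z$ near $c_0$ still has $z\in r(c)$. If $z=c_0$, then $U$ contains a neighbourhood of $c_0$, so for $c$ near $c_0$ the point $\max(c,c_0)$ lies in $r(c)\cap U$.

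For $r(c_0)\in U^+$: $U$ is a neighbourhood of $c_0$, so it contains $(c_0-\delta,c_0+\delta)\cap X$. For $|c-c_0|<\delta$ we then have $r(c)\subseteq [c,c_0)\cap X\cup r(c_0)\subseteq U$.

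The map $l$ is handled symmetrically. Note that this argument does not even need CIP beyond the existence of $f$, because the usual topology is finer than the Sorgenfrey topology used earlier.

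Finally, combine. On each clopen piece, $\phi$ is a composition of continuous maps: $f$, the swap, and $r$ or $l$. Hence $\phi\colon\mathcal{T}_2(X)\to\Cld(X)^2$ is continuous and satisfies the separation conditions, so $X$ is $CT_2$.
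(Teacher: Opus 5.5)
Your proposal is correct and follows essentially the paper's proof: define $\phi$ on $\mathcal{T}_2^d(X)$ by $\bigl([f(x,y),+\infty)\cap X,\,(-\infty,f(x,y)]\cap X\bigr)$, then reduce continuity to Vietoris continuity of the ray maps, checked on the subbasic sets $U^-$ and $U^+$ exactly as the paper does. You also write out the $x>y$ half, which the paper leaves implicit. One small slip: the Euclidean topology is \emph{coarser}, not finer, than the Sorgenfrey topology, but your direct verification of the Claim never uses that remark, so the proof is unaffected.
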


\begin{proof} Let $f$ be a continuous interval map for $X$.
		For $(x,y)\in\mathcal{T}_2^d(X) $, let	
	\[ \phi(x,y)=\left([f(x,y),+\infty)\cap X,(-\infty,f(x,y)]\cap X\right).\] 	
	Trivially, $\phi_1(x,y)$ and $\phi_2(x,y)$ are in $\Cld(X)$, and $x\not\in \phi_1(x,y)$, $y\not\in \phi_2(x,y)$	
	and $\phi_1(x,y)\cup \phi_2(x,y)=X$. Hence, we only need to verify that $\phi$ is continuous. To this end, since $f$ is continuous, it suffices, for example, to show that $\psi:X\to \Cld(X)$, where $\psi(x)=(-\infty, x]\cap X$, is continuous.	
	
	For each open set $U$ in $X$, if $U\cap \psi(x)\not=\emptyset$, then $U\cap (-\infty,x) \not=\emptyset$ or $x\in U$. If the former holds, it is trivial to verify that there exists $\delta>0$ such that $U\cap (-\infty,x') \not=\emptyset$ when $x'\in X$ with $|x'-x|<\delta$. If the latter holds, then $U$ is an open neighborhood of $x$ in $X$ and $U\cap \psi(x')\not=\emptyset$ for each $x'\in U$.
		
	Moreover, suppose that $U$ is open in $X$ and $\psi(x)\subseteq U$. Then there exists $\delta>0$ such that $X\cap (x-\delta,x+\delta)\subseteq U.$ It follows that	
	$V=X\cap (x-\delta,x+\delta)$ is an open neighborhood of $x$ and $\psi(x')\subseteq U$ for $x'\in V$.	
\end{proof}

Using the above theorem, we immediately obtain the following family of examples of $CT_2$ spaces:

\begin{ex} All subfields of $\R$ are $CT_2$. In particular, the space of rational numbers $\Q$ is $CT_2$.
 \end{ex}

  Moreover, we have the following family of examples of $CT_2$ spaces:

\begin{ex} Let $F$ be a subfield of $\R$ with $F\not=\R$. Then $\R\setminus F$ is $CT_2$. In particular, the space of irrational numbers $\PP$ is $CT_2$.
\end{ex}

\begin{proof} Since $0\in F$, the space $\R\setminus F=((-\infty,0)\setminus F)\oplus ((0,+\infty)\setminus F)$. These two subspaces are homeomorphic. By Theorem \ref{sum}, it therefore suffices to prove that $\R^+\setminus F$ is $CT_2$.

We only need to verify that $X=\R^+\setminus F$ has the CIP.  Since $\Q\subseteq F$, we can write each element in $X$ as a non-terminating binary expansion. That is, we can write each element $x$ in $X$ as $(\cdots,0,x_{n},x_{n-1},\cdots,x_0,x_{-1},x_{-2},\cdots)$, where $x_i\in\{0,1\}$ for $i\in \{n,n-1,\cdots,0\}\cup (-\N)$. For every pair $(x,y)\in \mathcal{T}_2^d(X) $, let
\[ m(x,y)=\max\{i\mid x_i\not=y_i\},~~n(x,y)=\max\{i<m(x,y)\mid x_i=0\}.\]
Since the sets $\{i>0\mid x_i=1\}$ and $\{i>0\mid y_i=1\}$ are finite and $x<y$, $m(x,y)$ is well-defined; and since $x\not\in\Q$, $n(x,y)$ is well-defined.
Now, let $f(x,y)=x+2^{n(x,y)}.$ That is,	
	\[ f(x,y)_i=\left\{	
	\begin{array}{ll}		
		x_i & i\not=n(x,y),\\		
		1 & i=n(x,y).		
	\end{array}
		\right.\]

Since $F$ is a field and $f(x,y)-x=2^{n(x,y)}\in\Q\subseteq F$, we have that $f(x,y)\in X$. Moreover, $x_{m(x,y)}=x_{n(x,y)}=0, y_{m(x,y)}=1$ and $m(x,y)>n(x,y)$, $x<f(x,y)<y$.

Note that in $X$, $z^{(k)}\to z$ if and only if for each $n\in -\N$, there exists $K\in\N$ such that $z^{(k)}_i=z_i$ for every $k>K$
and $i>n$. Using this fact, it is easy to verify that $f:\mathcal{T}_2^d(X) \to X$ is continuous.
\end{proof}

For a 0-dimensional closed subspace $X$ of $(-1,1)$, there exists a countable pairwise disjoint family $\{(a_n,b_n)\mid n\in\N\}$ such that
\[ (-1,1)\sm X=\bigcup_{n\in\N} (a_n,b_n).\]
Moreover, we can require that $b_n-a_n\geq b_m-a_m$ if $n<m$.

\begin{lem}\label{n-map} Let $X$ be a 0-dimensional closed subspace of $(-1,1)$. For each $(x,y)\in\mathcal{T}^d_2(X)$, let $n(x,y)$ be the smallest $n$ such that $x\leq a_n<b_n\leq y$. Then the map $(x,y)\mapsto n(x,y)$ is locally constant.
\end{lem}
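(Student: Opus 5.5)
Throughout, $X\subseteq(-1,1)$ is closed in $(-1,1)$ and $0$-dimensional, $(-1,1)\sm X=\bigcup_n(a_n,b_n)$ with the gaps pairwise disjoint and $b_n-a_n$ non-increasing, and $\mathcal{T}^d_2(X)$ carries the topology inherited from $X^2$. The plan has two parts: first show $n(x,y)$ is well defined, then show that each $(x_0,y_0)\in\mathcal{T}^d_2(X)$ has a neighbourhood $V$ in $\mathcal{T}^d_2(X)$ on which $n(x,y)=n(x_0,y_0)$.

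\emph{Well-definedness.} Fix $x<y$ in $X$.
Since $X$ is $0$-dimensional, it contains no nondegenerate interval, so some point $z\in(x,y)$ lies outside $X$. Then $z$ lies in a unique gap $(a_k,b_k)$. The gap is disjoint from $X$ while $x,y\in X$, so $x\le a_k<b_k\le y$. Hence the set $\{n\mid x\le a_n<b_n\le y\}$ is nonempty and its least element $n(x,y)$ exists.

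\emph{Local constancy.} Put $n_0=n(x_0,y_0)$. The key observation is that each condition involved is a \emph{strict} inequality between points of $X$ and endpoints of gaps, once we use that the endpoints are not themselves in the relevant open gaps.

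\textbf{Step 1 (index $n_0$ stays admissible).} We have $x_0\le a_{n_0}$. If $x_0<a_{n_0}$, then $x<a_{n_0}$ for all $x$ near $x_0$. If $x_0=a_{n_0}$, then points $x\in X$ slightly above $x_0$ would lie in $(a_{n_0},b_{n_0})$, which is impossible. So every $x\in X$ near $x_0$ satisfies $x\le x_0=a_{n_0}$. In both cases $x\le a_{n_0}$ on a neighbourhood of $x_0$ in $X$. Symmetrically, $y\ge b_{n_0}$ on a neighbourhood of $y_0$ in $X$.

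\textbf{Step 2 (no smaller index becomes admissible).} Only the finitely many indices $m<n_0$ matter. For each such $m$, the failure of $x_0\le a_m<b_m\le y_0$ means either $a_m<x_0$ or $b_m>y_0$.
\begin{itemize}
\item If $a_m<x_0$, then $a_m<x$ for $x$ near $x_0$, and $m$ remains inadmissible.
\item If $b_m>y_0$, then $b_m>y$ for $y$ near $y_0$, and $m$ remains inadmissible.
\end{itemize}
Intersecting these finitely many neighbourhoods with those from Step 1 gives a product neighbourhood $V$ of $(x_0,y_0)$ on which $n(x,y)=n_0$.

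The only delicate point is the boundary case $x_0=a_{n_0}$ (or $y_0=b_{n_0}$) in Step 1. It is handled by the fact that gaps contain no points of $X$, which makes $X$ locally one-sided at gap endpoints. Finiteness of the set $\{m<n_0\}$ is what allows the intersection in Step 2 to be taken. The monotonicity of the gap lengths $b_n-a_n$ is not needed for this lemma.
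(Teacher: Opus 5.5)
Your proof is correct. It differs from the paper's in how the smaller indices are ruled out.

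The paper uses a single explicit radius. With $n=n(x,y)$, it asserts that $|x'-x|<b_n-a_n$ and $|y'-y|<b_n-a_n$ already force $n(x',y')=n(x,y)$. The check is a gap-crossing argument. A point of $X$ that moves past $a_n$ must jump over all of $(a_n,b_n)$. If some $m<n$ became admissible, then $x'$ or $y'$ would have to jump over $(a_m,b_m)$. That gap has length at least $b_n-a_n$ precisely because the gaps were enumerated in non-increasing order of length.

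You instead handle the finitely many $m<n_0$ one at a time. Each failure, $a_m<x_0$ or $b_m>y_0$, is a strict and hence open condition. This needs no ordering of the gaps at all. So your argument shows the lemma holds for an arbitrary enumeration of the gaps, and with it the continuity of $f$ in Lemma~\ref{lem-0-dim-closed}. Your remark that monotonicity is superfluous is therefore right.

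The paper's route gains a uniform modulus that depends only on the value $n(x,y)$, not on the point $(x_0,y_0)$. The same radius $b_{n_0}-a_{n_0}$ also covers your boundary case $x_0=a_{n_0}$ without splitting cases. Your route trades that explicitness for generality. You also supply the justification that $n(x,y)$ exists, which the paper only asserts from $0$-dimensionality.
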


\begin{proof}First, since $X$ is 0-dimensional, for each $(x,y)\in \mathcal{T}^d_2(X)$, $n(x,y)$ exists. Second, it is not hard to verify that for each pair $(x',y')\in \mathcal{T}^d_2(X)$, if $|x'-x|<b_n-a_n$ and $|y'-y|<b_n-a_n$,
then $n(x',y')=n(x,y)$.
\end{proof}

Using this lemma, we can show a version of Lemma \ref{ordinal-T_2} for a 0-dimensional closed subspace $X$ of $(-1,1)$ as follows:

\begin{lem}\label{lem-0-dim-closed} For each 0-dimensional closed subspace $X$ of $(-1,1)$, there exists a continuous map $f:\mathcal{T}_2^d(X)\to X^2$ such that, for each $(x,y)\in \mathcal{T}_2^d(X)$,
\[ x<f_1(x,y);~ f_2(x,y)<y~~ \mbox{ and }~~([f_1(x,y),1)\cap X)\cup ((-1,f_2(x,y)]\cap X)=X.\]
Therefore, there exists a $CT_2$-map $\phi:\mathcal{T}_2^d(X)\to \Cld(X)^2$
such that $\phi_1(x,y)$ and $\phi_2(x,y)$ are an upper set and a lower set in $X$, respectively.
\end{lem}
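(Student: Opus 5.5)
The plan is to define $f$ directly from the gap chosen in Lemma \ref{n-map}. For $(x,y)\in\mathcal{T}_2^d(X)$, let $n=n(x,y)$, so that $x\le a_n<b_n\le y$. Since $X$ is closed in $(-1,1)$ and $(a_n,b_n)$ is a maximal component of the complement, both endpoints lie in $X$. (Neither endpoint can equal $\pm1$, because $x,y\in(-1,1)$ bound it.) Set
\[ f(x,y)=(b_n,a_n). \]
Then $f_1(x,y)=b_n>a_n\ge x$ and $f_2(x,y)=a_n<b_n\le y$. Moreover, no point of $X$ lies strictly between $a_n$ and $b_n$, so $([b_n,1)\cap X)\cup((-1,a_n]\cap X)=X$. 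Thus the three inequality and covering conditions hold.

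Continuity follows from Lemma \ref{n-map}. The map $(x,y)\mapsto n(x,y)$ is locally constant on $\mathcal{T}_2^d(X)$, and $f$ is a function of $n(x,y)$ alone. Hence $f$ is locally constant, and in particular continuous. The only step to double-check is that $a_n,b_n\in X$. If the description of $(-1,1)\setminus X$ used intervals whose endpoints might be $\pm1$, such an interval could never satisfy $x\le a_n<b_n\le y$ with $x,y\in X\subseteq(-1,1)$. So the selected gap always has both endpoints in $X$.

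For the second statement, define
\[ \phi(x,y)=\left([f_1(x,y),1)\cap X,\,(-1,f_2(x,y)]\cap X\right). \]
These sets are nonempty, since they contain $b_n$ and $a_n$ respectively, and they are closed in $X$. They form an upper set and a lower set, their union is $X$, $x\notin\phi_1(x,y)$ since $x<b_n$, and $y\notin\phi_2(x,y)$ since $a_n<y$. Continuity needs no further lemma: $\phi$ depends only on $n(x,y)$, so it is locally constant, hence continuous into $\Cld(X)^2$. The main (mild) obstacle is simply making sure Lemma \ref{n-map} applies as stated.
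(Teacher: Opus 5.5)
Your proposal is correct and follows the paper's proof. The paper also sets $f(x,y)=(b_{n(x,y)},a_{n(x,y)})$, gets continuity directly from the local constancy of $n(x,y)$ in Lemma \ref{n-map}, and treats the remaining conditions as trivial; your extra checks (that $a_n,b_n\in X$, and that $\phi$ is itself locally constant, so no analogue of Lemma \ref{continuity-2} is needed) fill in details the paper leaves implicit.
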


\begin{proof} For $(x,y)\in \mathcal{T}_2^d(X)$, let
\[ f(x,y)=(b_{n(x,y)},a_{n(x,y)}).\]
It follows from Lemma \ref{n-map} that $f:\mathcal{T}_2^d(X)\to X^2$ is continuous. Trivially, it also satisfies the other conditions.
\end{proof}

\begin{ex} Let $X$ be a 0-dimensional closed subspace  of $(-1,1)$. Then $X$ is $CT_3$, and it is $CT_4$ if and only if it is compact.
\end{ex}

\begin{proof} Replacing Lemma \ref{ordinal-T_2}  by Lemma \ref{lem-0-dim-closed}, the construction used in the proof of Example \ref{countable-ordinal} carries over verbatim (with the corresponding order intervals in X). Thus, every 0-dimensional closed subspace $X$ of $(-1,1)$ is $CT_3$. It follows from
Theorem \ref{0-dim-CT4} and Corollary \ref{nocomact-no-CT_4} that $X$ is $CT_4$ if and only if it is compact.
\end{proof}

\section{Summary and questions}
In this paper, we define  $CT_i$-spaces for $i=1,2,3,4$. Let $CT_i$ be the class of all  $CT_i$-spaces and let $T_i$ be the class of all $T_i$-spaces. The following relations were proved:
\[
\begin{array}{llll}
T_1 & = & CT_1&\supset  \mbox{ uncountable ordinal spaces,}~ \alpha D \mbox{ for uncountable discrete spaces }D\\
\cup & & \cup &\\
 T_2 & \supset  & CT_2&\ni \Q,\PP, \N\cup\{p\},S_\omega,\R_l  \\
            \cup && \cup\\
T_3 & \supset  & CT_3&\ni \R^n, [0,+\infty),  \mbox{ countable limit ordinal }\alpha, \\
                \cup &         &\cup &        \\
 T_4 & \supset  & CT_4 & \ni \II^m, \sphere^n,  C, \mbox{ countable successor ordinal }\alpha. \\
\end{array}
\]
where $n\in\N$ and $m\in\N\cup\{\infty\}$. However, $CT_4\not\ni \R^n,\Q, \PP$, $CT_3\not\ni \N\cup\{p\},S_\omega$ and
$CT_2\not\ni \alpha$ for each uncountable ordinal $\alpha$. We leave the following questions open:

\begin{prob} Is every dense subspace of $\R$ $CT_3$
?
\end{prob}

\begin{prob} Are $\Q$ and $\PP$ $CT_3$?
\end{prob}

Note that all $CT_3$ spaces given in this paper are metrizable. Hence, we pose the following problem:

 \begin{prob}\label{n-metr-CT3} Is there a non-metrizable $CT_3$-space, or even, a non-metrizable $CT_4$-space?
\end{prob}

In particular, we conjecture that the following problem has an affirmative answer:

 \begin{prob}\label{metr-CT3} For a space $X$ with a unique non-isolated point, if $X$ is $CT_3$, must $X$ be metrizable?
\end{prob}

Moreover, we can consider the following problems:

\begin{prob} Is every compact $CT_2$-space $CT_3$, or even $CT_4$?
\end{prob}

\begin{prob} Can one establish stronger versions of the Urysohn Lemma and the Tietze Extension Theorem for $CT_4$-spaces?
\end{prob}

\section*{Declaration of competing interest}

The authors declare that they have no known competing financial interests or personal relationships that could have
appeared to influence the work reported in this paper.

\section*{Acknowledgement}
The authors would like to express their appreciation to Professor Shuguo Zhang of
Sichuan University for his  valuable suggestions.

\end{document}